\let\@wraptoccontribs\wraptoccontribs
\def\extpower{\bigwedge}
\def\tpsi{\tilde{\psi}}
\def\tQ{\tilde{Q}}
\def\tf{\tilde{f}}
\def\Pre{\operatorname{Pre}}
\def\HochII{\operatorname{Hoch}^{\mathrm{II}}}
\def\Hoch{\operatorname{Hoch}}
\def\Deltaleft{\Delta^\lft}
\def\Deltaright{\Delta^\rght}
\def\cPerf{\operatorname{{\mathcal P}erf}}
\def\SemiFreedg{\operatorname{SF}_\dg}
\def\HProjdg{\operatorname{HP}_\dg}
\def\projdg{\operatorname{proj}_\dg}
\def\grMod{\operatorname{grMod}}
\def\Nonreg{\operatorname{Nonreg}}
\def\mf{\operatorname{mf}}
\def\dg{{\mathrm{dg}}}
\def\mfdg{\operatorname{mf}_\dg}
\def\LFdg{\operatorname{LF}_\dg}
\def\Yon{\Upsilon}
\def\Kos{\operatorname{Kos}}
\def\Thick{\operatorname{Thick}}
\def\Lotimes{\otimes^{\bL}}
\def\LotimesII{\otimes^{\bL, II}}
\def\del{\partial}
\def\cube#1#2#3#4#5#6#7#8{
& #5 \ar[rr] \ar[dl] \ar@{-}[d] && #6 \ar[dd] \ar[dl] \\
#1 \ar[rr] \ar[dd]  & \ar[d] & #2 \ar[dd] \\
& #7 \ar@{-}[r] \ar[dl] & \ar[r] & #8 \ar[dl] \\
#3 \ar[rr] && #4 \\
}
\def\rght{{\mathrm{right}}}
\def\lft{{\mathrm{left}}}
\def\Tot{\operatorname{Tot}}
\def\chr{\operatorname{char}}
\def\sing{{\operatorname{sing}}}
\def\image{\operatorname{im}}
\def\im{\image}
\def\ker{\operatorname{ker}}
\def\cone{\operatorname{cone}}
\def\cA{\mathcal A}
\def\cC{\mathcal C}
\def\cD{\mathcal D}
\def\cO{\mathcal O}
\def\cS{\mathcal S}
\def\cT{\mathcal T}
\def\cM{\mathcal M}
\def\cN{\mathcal N}
\def\cV{\mathcal V}
\def\Sing{\operatorname{Sing}}
\def\Proj{\operatorname{Proj}}
\def\Tor{\operatorname{Tor}}
\def\End{\operatorname{End}}
\def\Spec{\operatorname{Spec}}
\def\into{\hookrightarrow}
\def\onto{\twoheadrightarrow}
\def\ann{\operatorname{ann}}
\def\cH{\mathcal{H}}
\def\a{\alpha}
\def\b{\beta}
\def\g{\gamma}
\def\ug{{\underline{g}}}
\def\CechC{{\check{\mathrm{C}}}}
\DeclareMathOperator*{\colim}{colim}
\newcommand{\Q}{\mathbb{Q}}
\newcommand{\bP}{\mathbb{P}}
\newcommand{\A}{\mathbb{A}}
\newcommand{\G}{\mathbb{G}}
\newcommand{\bL}{{\mathbb{L}}}
\newcommand{\R}{{\mathbb{R}}}
\newcommand{\Z}{\mathbb{Z}}
\newcommand{\fp}{{\mathfrak p}}
\newcommand{\fm}{{\mathfrak m}}
\newcommand{\fq}{{\mathfrak q}}
\numberwithin{equation}{section}
\theoremstyle{plain} %% This is the default, anyway
\newtheorem{thm}[equation]{Theorem}
\newtheorem{thm-conj}[equation]{Theorem-Conjecture}
\newtheorem{defn-conj}[equation]{Definition-Conjecture}
\newtheorem*{introthm*}{Theorem}
\newtheorem{cor}[equation]{Corollary}
\newtheorem{lem}[equation]{Lemma}
\newtheorem{prop}[equation]{Proposition}
\theoremstyle{definition}
\newtheorem{defn}[equation]{Definition}
\newtheorem{ex}[equation]{Example}
\theoremstyle{remark}
\newtheorem{rem}[equation]{Remark}
\def\Perfdg{\operatorname{Perf}_{{\mathrm{dg}}}}
\def\qPerfcdg{\operatorname{qPerf}_{{\mathrm{cdg}}}}
\def\SemiProjdg{\operatorname{SP}_{{\mathrm{dg}}}}
\newcommand{\Cech}{\v{C}ech }
\newcommand{\Hom}{\operatorname{Hom}}
\newcommand{\supp}{\operatorname{supp}}
\newcommand{\xra}[1]{\xrightarrow{#1}}
\newcommand{\xira}[1]{\xhookrightarrow{#1}}
\newcommand{\xla}[1]{\xleftarrow{#1}}
\newcommand{\id}{\operatorname{id}}
\newcommand{\pd}{\operatorname{pd}}
\def\l{\lambda}
\def\L{{\mathbb L}}
\def\and{ \text{ and } }
\def\wideand{ \, \, \text{ and }  \, \, }
\def\can{\mathrm{can}}
\def\ob{\operatorname{ob}}
\def\op{{\mathrm{op}}}
\def\G{\Gamma}
\def\nat{\natural}
\def\Mod{\mathrm{Mod}}
\def\Moddg{\Mod_{{\mathrm{dg}}}}
\begin{document}

\title{On the Hochschild homology of curved algebras}

\author[Walker]{Mark E. Walker}
\address{Department of Mathematics, University of Nebraska, Lincoln, NE 68588, U.S.A.}
\email{mark.walker@unl.edu}

\contrib[with an appendix by]{Benjamin Briggs}

\begin{abstract}
  We compute the Hochschild homology of the differential graded category of perfect curved modules over suitable curved rings, giving what might be termed ``de Rham models'' for such.  This represents a
  generalization of previous results by Dyckerhoff, Efimov, Polishchuk, and Positselski
  concerning the Hochschild homology of matrix factorizations.
  A key ingredient in the proof is a theorem due to B.~Briggs, which represents a ``curved version'' of a celebrated
  theorem of Hopkins and Neeman. The proof of Briggs' Theorem is included in an appendix to this paper.
\end{abstract}

\date{\today}

\subjclass[2020]{13D03, 13D09, 14F08}

\maketitle

\setcounter{tocdepth}{1}
\tableofcontents

\thanks{
  Walker was partly supported by the National Science Foundation (NSF) grant DMS-2200732 and Briggs was funded by the European Union under the Grant Agreement no.\ 101064551 (Hochschild).  Part of this paper was written while Briggs and Walker 
were in residence at the Simons Laufer Mathematical Sciences Institute  in Berkeley, California, during the Spring 2024 semester,
and they were supported by the  NSF under Grant No. DMS-1928930 and by the Alfred P. Sloan Foundation under grant G-2021-16778.
}

\section{Introduction}\label{introduction}

In this paper, by a {\em curved ring} we mean a pair $\cA = (A, w)$, where $A$ is a commutative $\Z$-graded ring that is concentrated in even degrees (i.e., $A^i = 0$ for $i$ odd)
and $w$, known as the {\em
  curvature}, is an element of cohomological degree $2$: $w \in A^2$. 
A {\em curved module} over $\cA$ is a pair $(M, \del)$ consisting of a  graded $A$-module $M$ equipped with an $A$-linear endomorphism $\del$
of degree $1$ satisfying $\del^2(m) = w \cdot m$ for all $m \in M$. 
The collection of curved $\cA$-modules form a differential graded (dg) category, written as $\Moddg(\cA)$. 
A curved module $(M, \del)$ will be called  {\em perfect} if $M$ is  finitely generated and projective as a graded $A$-module.  
We write $\Perfdg(\cA)$ for the full dg subcategory on the collection of all perfect curved $\cA$-modules. 

The main Theorem of this paper computes the Hochschild homology of $\Perfdg(\cA)$ over a suitable ground ring:

\begin{thm} \label{MainTheorem}
  Let $k$ be a $\Z$-graded commutative ring that is concentrated in even degrees, regular, and excellent, 
  and let $\cA = (A, w)$ be a curved ring such that $A$ is an essentially smooth $k$-algebra.
  Then there is an isomorphism
  $$
  HH(\Perfdg(\cA)) \cong \R\G_{\Nonreg(\cA)} (\Omega^\cdot_{A/k}, dw)
  $$
  in the derived category of dg $A$-modules.
  
  Moreover,   when $k$ is an algebra over the  field of rational numbers, 
  this isomorphism is represented by a natural quasi-isomorphism of explicit dg $k$-modules, under which the Connes' operator
on  Hochschild homology corresponds with the de Rham differential on $\Omega^\cdot_{A/k}$.
\end{thm}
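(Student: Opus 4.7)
The plan is to pass from the Hochschild homology of the dg category $\Perfdg(\cA)$ to a Hochschild-of-the-second-kind complex $\HochII(\cA)$ of the curved algebra itself, carry out an HKR-type identification in the smooth setting, and finally invoke Briggs' Theorem to read off the support condition.

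First, I would establish a Morita-style reduction of the form $HH(\Perfdg(\cA)) \simeq \HochII(\cA)$ by showing that the natural trace-like map from the Hochschild complex of $\cA$ into that of $\Perfdg(\cA)$ is a quasi-isomorphism; this should be a curved analogue of the standard fact that $HH$ of a ring agrees with $HH$ of its dg category of perfect complexes, with the key twist that in the curved setting one must work with the completed ``second kind'' Hochschild complex so as to accommodate the curvature terms on the bar side. On the bimodule side, since $A$ is essentially smooth over $k$, one has a classical HKR-style map
\[
\mu \colon (\Omega^\cdot_{A/k}, dw) \longrightarrow \HochII(\cA),
\]
where the $dw$ differential on the left arises precisely from the curvature contribution to the second-kind Hochschild differential on the bar complex.

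The next, and hardest, step is to convert the completion implicit in $\HochII$ into the support condition $\R\G_{\Nonreg(\cA)}$. The map $\mu$ is not a quasi-isomorphism in general: the right-hand side is a completed object whose homology is supported only where the curved algebra fails to be regular, while the left-hand side has no such vanishing. Briggs' Theorem, proved in the appendix as a curved version of Hopkins-Neeman, classifies thick subcategories of $\Perfdg(\cA)$ in terms of specialization-closed subsets of $\Nonreg(\cA)$ and in particular shows that a perfect curved module supported away from the non-regular locus is contractible. From this I would deduce that $\HochII(\cA)$ is already supported on $\Nonreg(\cA)$, and then a local HKR calculation at each non-regular prime identifies it with the corresponding localization of $(\Omega^\cdot_{A/k}, dw)$. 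Globalizing via a Cousin-type filtration associated to $\Nonreg(\cA)$ upgrades this to the asserted isomorphism with $\R\G_{\Nonreg(\cA)}(\Omega^\cdot_{A/k}, dw)$.

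For the $\Q$-algebra refinement, I would keep the entire argument at the chain level using the explicit antisymmetrized HKR formula
\[
a_0 \otimes a_1 \otimes \cdots \otimes a_n \longmapsto \tfrac{1}{n!}\, a_0\, da_1 \wedge \cdots \wedge da_n,
\]
whose denominators force the hypothesis $\Q \subseteq k$. A direct chain-level computation then shows that Connes' operator $B$, which shuffles a tensor factor into the first ``differential'' slot, corresponds under $\mu$ to the de Rham differential on $\Omega^\cdot_{A/k}$. The main obstacle overall is the middle step: one must verify that Briggs' classification of thick subcategories is genuinely compatible with the second-kind completion appearing in $\HochII$, so that ``support on $\Nonreg(\cA)$'' on the category side translates cleanly into the local cohomology functor $\R\G_{\Nonreg(\cA)}$ on the de Rham side without losing the quasi-isomorphism established by the local HKR comparison.
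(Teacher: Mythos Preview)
Your proposal has the right ingredients but assembles them incorrectly, and the first step is where it breaks.

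The claimed ``Morita reduction'' $HH(\Perfdg(\cA)) \simeq \HochII(\cA)$ is false in general, and in fact this is precisely where the local cohomology enters. In the paper's approach (see Proposition \ref{prop416} and Efimov's Proposition \ref{prop415b}), what is true is that $HH^{II}(\Perfdg(\cA)) \cong \HochII(\cA)$ via pseudo-equivalences, and Efimov's HKR map $\HochII(\cA) \xrightarrow{\sim} (\Omega^\cdot_{A/k}, dw)$ \emph{is} a quasi-isomorphism in the smooth setting --- no correction is needed there. The gap is between the type I and type II theories: the canonical map $HH(\Perfdg(\cA)) \to HH^{II}(\Perfdg(\cA))$ is not an isomorphism, and the main theorem amounts to identifying it with the local cohomology unit map $\R\Gamma_{\Nonreg(\cA)}(\Omega^\cdot_{A/k},dw) \to (\Omega^\cdot_{A/k},dw)$. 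So your later claim that $\HochII(\cA)$ is supported on $\Nonreg(\cA)$ is also wrong: its support is the full singular locus of $w:\Spec A \to \A^1_k$, which typically strictly contains $\Nonreg(\cA)$ (this is the whole point of Corollary \ref{CorPerfectField} and the subsequent discussion).

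The paper's actual mechanism is different and does not go through $\HochII$ for the general statement. One works on the \emph{enveloping} side: the embedding $\psi:\Perfdg(\cA)^e \hookrightarrow \Perfdg(\cA^e)$ lands in the full subcategory $\Perfdg^W(\cA^e)$ of objects supported on $W = \Nonreg(\cA)\times_{\Spec k}\Nonreg(\cA)$. Briggs' Theorem is applied to $\cA^e$ (not $\cA$) to show that this factored functor $\phi:\Perfdg(\cA)^e \to \Perfdg^W(\cA^e)$ is a Morita equivalence. A separate general result (Theorem \ref{thm711b}) shows that restricting modules to a support-defined subcategory introduces exactly $\R\Gamma_W$ on derived tensor products. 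Finally, a direct Koszul-type resolution of $A$ over $\cA^e$ computes $h^A \Lotimes_{\Perfdg(\cA^e)} t^A \cong (\Omega^\cdot_{A/k}, dw)$, with no characteristic assumption. Your use of Briggs to bound the support of a dg $A$-module like $\HochII(\cA)$ is a category error: Briggs classifies thick subcategories of curved modules, and there is no direct route from that to a support statement about the Hochschild complex. The type II machinery and the explicit HKR formula you describe are used in the paper only for the $\Q$-algebra refinement on naturality and the Connes operator, after the isomorphism itself has already been established by the enveloping-algebra argument.
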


Let us explain some of the terminology in this theorem.
$HH(\Perfdg(\cA))$ denotes the Hochschild homology, taken relative to
$k$,  of the  $k$-linear dg category $\Perfdg(\cA)$. It is defined in a manner analogous to the formula
$HH(R) = R \Lotimes_{R \otimes_k R} R$ for an ordinary $k$-algebra $R$;  see \eqref{E421} below for the precise definition. We will sometimes write it as $HH(\Perfdg(\cA)/k)$ when the role of
the base ring needs emphasis. 
By $(\Omega^\cdot_{A/k}, dw)$ we mean the dg $A$-module consiting of the graded $A$-module $\bigoplus_q \Sigma^q \Omega^q_{A/k}$ equipped with the differential
given as multiplication by $dw$. 
(Here, $\Omega^1_{A/k}$ is the module of Kahler differentials, equipped with the grading induced by that on $A$,
$\Omega^q_{A/k}$ is its $q$-th exterior power over $A$ and $\Sigma^q$ refers to shifting the grading by $q$.)
Equivalently, $(\Omega^\cdot_{A/k}, dw)$ is the totalization of the complex 
$$
\cdots \to 0 \to A \xra{dw} \Sigma^2 \Omega^1_{A/k}\xra{dw} \Sigma^4\Omega^2_{A/k} \xra{dw} \cdots \xra{dw} \Sigma^{2d} \Omega^{d}_{A/k} \to 0 \to \cdots
$$
of graded $A$-modules.
By $\Nonreg(\cA)$ we mean the Zariski closed subset $\{\fp \in \Spec(A) \mid w \in \fp^2 A_\fp\}$ of $\Spec(A)$ and $\R\G_{\Nonreg(\cA)}$ refers
to taking local cohomology along it.

As a simple example, if the curvature is trivial, $w = 0$, then $\Perfdg(\cA)$ is the dg category of perfect dg $A$-modules, $\Nonreg(\cA) = \Spec(A)$, and thus our results
yield the (previously well-known) isomorphism
  $$
  HH(\Perfdg(A)) \cong \bigoplus_j \Sigma^j \Omega^j_{A/k}.
  $$
  Of greater interest is when $w$ is a non-zero-divisor in $A$ (i.e., it is nonzero on each component of the regular scheme $\Spec(A)$).
    In this case,   $\Nonreg(\cA)$ coincides with
  $$
  \Nonreg(A/w) = \{\fp \in \Spec(A) \mid w \in \fp \text{ and $(A/w)_{\fp}$ is not a regular ring} \},
  $$
  the non-regular locus of the hypersurface $A/w$.
  For the rest of the introduction, we assume $w$ is a non-zero-divisor.

\subsection{The case when $k$ is a perfect field}
When $k$ is a perfect field (and $w$ is a non-zero-divisor) we have
  $$
  \Nonreg(A/w) = \Sing_k(A/w) := \{\fp \in \Spec(A/w) \mid \text{$A/w$ is not smooth over $k$ near $\fp$}\}.
  $$
Let us write $\Sing(w)$ for the singular locus of the morphism
  $\Spec(A) \xra{w} \A^1_k = \Spec(k[x])$ of affine $k$-schemes induced by the $k$-algebra map $k[x] \to A$ sending $x$ to $w$.
  The Jacobian criterion gives that $\Sing(w)$ coincides with the support of $(\Omega^\cdot_{A/k}, dw)$, and so we
  have
  $$
  \Nonreg(A/w) = \Sing(w) \cap V(w) = \supp (\Omega^\cdot_{A/k}, dw) \cap V(w).
  $$
We may thus deduce the following consequence of Theorem  \ref{MainTheorem}.

  \begin{cor} \label{CorPerfectField} Under the assumptions of Theorem
    \ref{MainTheorem}, suppose in addition that $w$ is a
    non-zero-divisor and $k$ is a perfect field.
    Then there are  isomorphism in the derived category
    $$
HH(\Perfdg(\cA)) \cong  \R\Gamma_{V(w)} (\Omega^\cdot_{A/k}, dw).
$$
of dg $A$-modules. 
In particular, if $\Sing(w) \subseteq V(w)$, then
$$
HH(\Perfdg(\cA)) \cong  (\Omega^\cdot_{A/k}, dw).
$$
\end{cor}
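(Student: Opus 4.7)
The plan is to deduce Corollary \ref{CorPerfectField} from Theorem \ref{MainTheorem} by a short manipulation of local cohomology, using the identifications of $\Nonreg(\cA)$ recorded in the preceding paragraphs. Abbreviate $M := (\Omega^\cdot_{A/k}, dw)$. Theorem \ref{MainTheorem} gives $HH(\Perfdg(\cA)) \cong \R\Gamma_{\Nonreg(\cA)}(M)$, so what must be proved is that $\R\Gamma_{\Nonreg(\cA)}(M) \cong \R\Gamma_{V(w)}(M)$ in general, and that $\R\Gamma_{V(w)}(M) \cong M$ when in addition $\Sing(w) \subseteq V(w)$.

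The excerpt already records the chain of equalities
$$\Nonreg(\cA) = \Sing(w) \cap V(w) = \supp(M) \cap V(w),$$
the last step being the Jacobian criterion. In particular $\Nonreg(\cA) \subseteq V(w)$, so there is a canonical comparison map $\R\Gamma_{\Nonreg(\cA)}(M) \to \R\Gamma_{V(w)}(M)$ in the derived category of dg $A$-modules. To show it is a quasi-isomorphism I would use two standard formal properties of local cohomology: first, $\R\Gamma_Z(N) \cong \R\Gamma_Z(A) \Lotimes_A N$, so that $\supp \R\Gamma_Z(N) \subseteq Z \cap \supp N$; and second, $\R\Gamma_Z \circ \R\Gamma_{Z'} \cong \R\Gamma_{Z \cap Z'}$, together with the fact that $\R\Gamma_Z(N) \xra{\sim} N$ whenever $\supp N \subseteq Z$ (the cofiber being the localization of $N$ to the open complement of $Z$, which vanishes by hypothesis).

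Applying the first property with $Z = V(w)$ shows that $\R\Gamma_{V(w)}(M)$ has support contained in $V(w) \cap \supp M = \Nonreg(\cA)$. Applying the second then gives
$$\R\Gamma_{V(w)}(M) \cong \R\Gamma_{\Nonreg(\cA)}\bigl(\R\Gamma_{V(w)}(M)\bigr) \cong \R\Gamma_{\Nonreg(\cA) \cap V(w)}(M) = \R\Gamma_{\Nonreg(\cA)}(M),$$
which is the first isomorphism of the corollary. For the final clause, assume $\Sing(w) \subseteq V(w)$. Then $\Nonreg(\cA) = \Sing(w) \cap V(w) = \Sing(w) = \supp M$, so combining the previous display with $\R\Gamma_{\supp M}(M) \xra{\sim} M$ yields $HH(\Perfdg(\cA)) \cong M$.

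I anticipate no genuine obstacle here: beyond Theorem \ref{MainTheorem}, the only inputs are the identifications of the supports $\Nonreg(\cA)$, $V(w)$, and $\Sing(w)$ that were already recorded (or quoted from the Jacobian criterion) in the preceding paragraphs, together with standard formal properties of the derived local cohomology functor $\R\Gamma_Z$. The whole argument is essentially an exercise in separating the local cohomology computation along the hypersurface $V(w)$ from the computation along the singular locus $\Sing(w)$, and noting that the complex $M$ is already supported on the latter.
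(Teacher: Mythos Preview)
Your proposal is correct and matches the paper's approach: the paper does not give an explicit proof of this corollary but simply records the identifications $\Nonreg(\cA) = \Sing(w) \cap V(w) = \supp(\Omega^\cdot_{A/k}, dw) \cap V(w)$ in the preceding paragraph and then states that the corollary follows from Theorem~\ref{MainTheorem}. Your argument spells out the local-cohomology manipulation that the paper leaves implicit, and does so correctly.
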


\subsection{Main Examples}
We now discuss an important collection of examples. Throughout the rest of this introduction, we fix a field
$F$, an essentially smooth (ungraded) $F$-algebra $Q$, 
and elements $f_1, \dots, f_c$ in $Q$. Set $\tQ := Q[t_1, \dots, t_c]$ for degree two indeterminants $t_1, \dots, t_c$ and $\tf := \sum_i f_i t_i \in \tQ$. 
Then $(\tQ, \tf)$ is a curved algebra and it is essentially smooth over both $F$ and $F[t_1, \dots, t_c]$.
In this situation we have  $\Sing(\A^c_Q \xra{\tf} \A^1_F) \subseteq V(\tf)$, because $\tf = \sum_i \frac{\partial \tf}{\partial t_i} t_i$,. Corollary \ref{CorPerfectField} thus
specializes to give:

\begin{cor} \label{cor76}  With the notation just introduced, assume also that $F$ is a perfect field.
  Then there is an isomorphism
  $$
  HH(\Perfdg(\tQ, \tf)/F) \xra{\cong} (\Omega^\cdot_{\tQ/F}, d \tf).
  $$
  in the derived category of $\tQ$-modules. 
  Moreover, if $\chr(F) = 0$, this isomorphism may be realized as a natural quasi-isomorphism of dg $F$-algebras, under which the Connes'  operator corresponds
  with the de Rham differential.
\end{cor}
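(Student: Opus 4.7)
The plan is to deduce the corollary by specializing Corollary \ref{CorPerfectField}. The nontrivial case is when some $f_i \neq 0$ (otherwise $\tf = 0$ and the result reduces to the zero-curvature computation noted immediately after Theorem \ref{MainTheorem}); I assume this from now on. Then $\tf = \sum_i f_i t_i$ is a nonzero linear form in the $t_i$'s over the reduced essentially smooth $F$-algebra $Q$, hence a non-zero-divisor in $\tQ$. The ring $\tQ = Q[t_1, \ldots, t_c]$ (with $\deg t_i = 2$) is essentially smooth over the perfect field $F$ because it is a polynomial extension of $Q$, and $F$ itself trivially satisfies the regularity and excellence hypotheses of Theorem \ref{MainTheorem} with $k = F$. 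So all hypotheses of Corollary \ref{CorPerfectField} are in force.

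To pass to the cleaner of the two isomorphisms in that corollary, I need the containment $\Sing(\tf) \subseteq V(\tf)$. This follows from the Euler-type identity
\begin{equation*}
\tf \;=\; \sum_{i=1}^c \frac{\partial \tf}{\partial t_i} \, t_i,
\end{equation*}
valid because $\tf$ is homogeneous of degree one in the $t_i$'s and $\partial \tf/\partial t_i = f_i$. The identity shows $\tf$ lies in the Jacobian ideal $(\partial \tf/\partial t_1, \ldots, \partial \tf/\partial t_c)$, so any prime in $\Sing(\tf)$ (which by the Jacobian criterion is cut out by the Jacobian ideal) also contains $\tf$, i.e.\ lies in $V(\tf)$. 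Corollary \ref{CorPerfectField} therefore delivers the asserted isomorphism $HH(\Perfdg(\tQ, \tf)/F) \xra{\cong} (\Omega^\cdot_{\tQ/F}, d\tf)$ in the derived category of $\tQ$-modules.

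For the moreover statement in characteristic zero, the second half of Theorem \ref{MainTheorem} already provides a natural quasi-isomorphism of explicit dg $F$-modules that intertwines Connes' operator with the de Rham differential. What remains, and in my view is the principal obstacle, is to promote this to a quasi-isomorphism of dg $F$-\emph{algebras} — i.e.\ to match the shuffle product on the Hochschild complex with the wedge product on $\Omega^\cdot_{\tQ/F}$. I expect this to be handled by unpacking the explicit HKR-style map produced in the proof of Theorem \ref{MainTheorem}: because both sides are graded commutative dg algebras and the comparison map is natural, multiplicativity on cohomology should reduce to a check on a generating set (the image of $Q$ and the classes $dt_i$), in direct analogy with the classical argument for the HKR isomorphism in characteristic zero.
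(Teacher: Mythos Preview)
Your argument is the same as the paper's: the paper's entire justification for this corollary is the Euler identity $\tf = \sum_i \frac{\partial \tf}{\partial t_i}\, t_i$ (yielding $\Sing(\tf) \subseteq V(\tf)$) followed by an appeal to Corollary~\ref{CorPerfectField}.

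Two remarks. First, your claim that $\tf$ is automatically a non-zero-divisor whenever some $f_i \ne 0$ is not quite right: if $Q$ is not a domain (e.g.\ $Q = F \times F$ with $c=1$ and $f_1 = (1,0)$), then $\tf$ is a zero-divisor. This is harmless---one decomposes $\Spec(Q)$ into connected components and argues componentwise---and the paper does not address this point either. Second, your caution about the phrase ``dg $F$-algebras'' is well-placed: the paper's supporting results (Theorem~\ref{MainTheorem} and the explicit zig-zag of Theorem~\ref{thm418}) produce quasi-isomorphisms of dg $A$-\emph{modules} compatible with the Connes'/de Rham operators, and no separate argument for multiplicativity is given. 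So on this point you are not missing anything the paper actually supplies.
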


Corollary \ref{cor76} allows for a calculation of the Hochschild homology of the bounded derived category of modules over a complete intersection.
In detail, in the setting of this Corollary, suppose $f_1, \dots, f_c$ form a regular sequence and set $R = Q/(f_1, \dots, f_c)$.
Then it is known that the dg category $\Perfdg(\tQ, \tf)$ is quasi-equivalent to $\cD^b_\dg(R)$, a dg enhancement of the
bounded derived category of $R$-modules; see \cite{Martin}.  Since Hochschild homology in invariant under quasi-equivalence, Corollary \ref{cor76} gives
an isomorphism
$$
HH(\cD^b_\dg(R)/F) \cong (\Omega^\cdot_{\tQ/F}, d\tf)
$$
in the derived category of $Q$-modules.

\subsection{An example where $k$ is not a field}
In our running example, it also makes sense to take $k = F[t_1 \dots, t_c]$.
Then $\Omega^1_{\tQ/k} \cong \Omega^1_{Q/F}[t_1, \dots, t_c]$ and $d \tf = \sum_i t_i df_i$, which leads to a simpler complex $(\Omega^\cdot_{\tQ/k}, dw)$.
But the singular  locus of $\A^c_Q \xra{\tf} \A^1_k= \A^c_F \times_F \A^1$ often fails to be contained in $V(\tf)$,
making the role of local cohomology meaningful. 
For instance if $Q = F[x_1, \dots, x_n]$, or any localization of such, the singular locus of the morphism $\A^{c+n}_F \cong \Spec(\tQ)\xra{\tf} \A^1_k \cong \A^{c+1}_F$
is the set of solutions of $(t_1, \dots, t_c)  \cdot J = (0, \dots, 0)$
where $J$ is the $c \times n$ Jacobian matrix $(\partial f_i/\partial x_j)$; this is typically not contained in $V(\tf)$.

\subsection{The codimension one case and  matrix factorizations}
Let us specialize our running example  to the case $c = 1$, so that $\cA = (Q[t], ft)$ and $R = Q/f$ is a hypersurface.  Taking $k = F[t]$,  we have 
$$
HH((Q[t], ft)/F[t]) \cong \R\G_{\Nonreg(Q[t]/ft)} (\Omega^\cdot_{Q/k}[t], t df).
$$
An important  variant of this is given by the curved algebra  $(Q[t, t^{-1}], ft)$ over the base ring $k = F[t, t^{-1}]$.
In this case, we may identify $\Perfdg(Q[t, t^{-1}], ft)$ with the $\Z/2$-graded
dg category of {\em matrix factorizations} of $f \in Q$, written $\mf_\dg(Q,f)$. The objects are $\Z/2$-graded
finitely generated projective $Q$-modules equipped with odd degree endomorphisms that square to $f$, and the hom sets are $\Z/2$-graded complexes of $Q$-modules.
The associated homotopy category is isomorphic to the {\em singularity
  category} of $R = Q/f$; see  \cite[Theorem 4.4.1]{MCMbook} and \cite[Theorem 3.9]{Orlov}.
In general, a $\Z/2$-graded $F$-linear dg category $\cC$ is the same thing as a $F[t, t^{-1}]$ linear dg category where $t$ is an indeterminant of degree $2$.
We will write $HH_{\Z/2}(\cC)$ for $HH(\cC/F[t, t^{-1}])$.
Our results thus specialize to give:

\begin{cor} \label{IntroCor10}
  Let $F$ be a  field, $Q$ an essentially smooth $F$-algebra, and $f \in Q$ a non-zero-divisor.
  \begin{enumerate}
    \item If $F$ is perfect and the only singular value of the morphism $\Spec(Q) \xra{f} \A^1_{F}$ is the origin,
      then we have an isomorphism
    $$
    HH_{\Z/2}(\mfdg(Q,f)) \cong(\Omega^{\Z/2}_{Q/F}, df)
    $$
    where the superscript $\Z/2$ indicates  forming the $\Z/2$-folding of this complex.
    
  \item More generally, there exists an isomorphism
$$
HH_{\Z/2}(\mfdg(Q,f)) \cong
\R \Gamma_{V(f)}  (\Omega^{\Z/2}_{Q/F}, df)
= \Sigma^{-1} \cone \left((\Omega^{\Z/2}_{Q/F}, df) \into (\Omega^{\Z/2}_{Q/F}[1/f], df)\right).
$$
\end{enumerate}
\end{cor}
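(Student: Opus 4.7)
The plan is to apply Theorem \ref{MainTheorem} to the curved algebra $\cA = (A, w) = (Q[t,t^{-1}], ft)$ over the base ring $k = F[t,t^{-1}]$ (with $t$ of degree $2$), and then transport the result through the quasi-equivalence $\Perfdg(Q[t,t^{-1}], ft) \simeq \mfdg(Q,f)$ recalled just above the corollary. The hypotheses of Theorem \ref{MainTheorem} are immediate: $k$ is $\Z$-graded, concentrated in even degrees, regular, and excellent, while $A$ is essentially smooth over $k$ since $Q$ is essentially smooth over $F$. Because $t$ is invertible of degree $2$, the paper's observation that $\Z$-graded $k$-linear dg categories are the same as $\Z/2$-graded $F$-linear dg categories identifies $HH(\Perfdg(\cA)/k)$ with $HH_{\Z/2}(\mfdg(Q,f))$.

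Next I would compute the right-hand side of Theorem \ref{MainTheorem}. Since $t \in k$, one has $\Omega^1_{A/k} = \Omega^1_{Q/F}\otimes_F k$ and $dw = t\, df$. The $k$-linear automorphism that rescales $\Omega^q_{A/k}$ by $t^{-q}$ shifts the $\Z$-grading by an even amount (hence preserves $\Z/2$-parity) and conjugates the differential $t\,df$ into $df$; combined with the correspondence between $\Z$-graded $k$-modules and $\Z/2$-graded $F$-modules, this identifies $(\Omega^\cdot_{A/k}, dw)$ with the $\Z/2$-folding $(\Omega^{\Z/2}_{Q/F}, df)$.

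For the non-regular locus, since $t$ is a unit in $A$, we have $\Nonreg(\cA) = \{\fp \in \Spec A : f \in \fp^2 A_\fp\}$, which by the essential smoothness of $Q/F$ is exactly the pullback to $\Spec A$ of $V(f) \cap \Sing(f) \subseteq \Spec Q$, where $\Sing(f)$ denotes the critical locus of $f$. By the Jacobian criterion, $\Sing(f)$ coincides with the support of $(\Omega^{\Z/2}_{Q/F}, df)$, so
$$
\R\Gamma_{\Nonreg(\cA)}(\Omega^{\Z/2}_{Q/F}, df) \;=\; \R\Gamma_{V(f)}(\Omega^{\Z/2}_{Q/F}, df).
$$
Combined with Theorem \ref{MainTheorem}, this proves part (2); the $\Sigma^{-1}\cone$ expression is the standard formula for local cohomology along a principal closed subset. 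For part (1), the hypothesis that the only singular value of $f$ is the origin gives $\Sing(f) \subseteq V(f)$, so the support of $(\Omega^{\Z/2}_{Q/F}, df)$ already lies in $V(f)$ and the local cohomology functor acts as the identity.

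The main point that requires care is the bookkeeping when passing between the $\Z$-graded $k$-linear framework of Theorem \ref{MainTheorem} and the $\Z/2$-graded $F$-linear framework in which matrix factorizations naturally live, in particular verifying that the $t^{-q}$-rescaling defines a morphism of $\Z/2$-graded $k$-complexes and that the resulting isomorphism matches the equivalence between $\Z/2$-graded $F$-linear and $\Z$-graded $k$-linear data used to identify the two flavors of Hochschild homology.
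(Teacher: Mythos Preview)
Your approach is the same as the paper's: specialize Theorem~\ref{MainTheorem} (via Corollary~\ref{CorPerfectField}) to $\cA=(Q[t,t^{-1}],ft)$ over $k=F[t,t^{-1}]$ and translate through the equivalence with $\mfdg(Q,f)$. The computation of $(\Omega^\cdot_{A/k},dw)$ and the deduction of part~(1) from part~(2) are fine.

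There is one genuine gap. You assert that $\{\fp: f\in\fp^2 Q_\fp\}=V(f)\cap\Sing(f)$ ``by the essential smoothness of $Q/F$''. Essential smoothness alone does not give this. The map $\fp/\fp^2\to\Omega^1_{Q/F}\otimes\kappa(\fp)$ is injective precisely when $\kappa(\fp)$ is separable over $F$; only then does $df\equiv 0\pmod\fp$ force $f\in\fp^2 Q_\fp$. Thus the identity $\Nonreg(\cA)=V(f)\cap\Sing(f)$---and hence the replacement of $\R\Gamma_{\Nonreg(\cA)}$ by $\R\Gamma_{V(f)}$---requires $F$ to be perfect. This is exactly where the paper invokes perfection in Corollary~\ref{CorPerfectField}, and the imperfect-field example immediately following Corollary~\ref{IntroCor10} (with $Q=F[x,y]$, $f=y(x^p-a)$) is a concrete counterexample: there $\Nonreg(Q/f)=\{(y,x^p-a)\}$ while $V(f)\cap\Sing(f)=V(x^p-a)$, and one checks that $\R\Gamma_{V(f)}(\Omega^{\Z/2}_{Q/F},df)$ differs from $\R\Gamma_{\Nonreg(Q/f)}(\Omega^{\Z/2}_{Q/F},df)\cong HH_{\Z/2}(\mfdg(Q,f))$. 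So part~(2) carries the implicit hypothesis that $F$ is perfect, and your argument for it should invoke that hypothesis rather than smoothness.
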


\begin{ex} \label{ex76} Suppose $F$ is a perfect field and $f \in F[x_1, \dots, x_n]$ is such that the only singular value of the
  morphism $f: \A^n_F  \to \A^1_F$  is at the origin. 
  Then $\frac{\partial f}{x_1}, \dots, \frac{\partial f}{x_n}$ form a regular sequence and hence 
  the Corollary gives an isomorphism
  $$
  HH_{\Z/2}(\mfdg(F[x_1, \dots, x_n],f)) \cong \Sigma^{-n}
  \left(\frac{F[x_1, \dots, x_n]}{(\frac{\partial f}{x_1}, \dots, \frac{\partial f}{x_n})}\right)
  $$
  in the derived category. 
  \end{ex}

If the morphism $\Spec(Q) \xra{f}\A^1_F$ has only a finite number of singular values, which is always the case when $\chr(F) = 0$,
then, up to quasi-isomorphism, the dg $A$-module $(\Omega^\cdot_{Q/F}, df)$  is a finite direct sum indexed by these values, and
$\R\G_{V(f)} (\Omega^\cdot_{Q/F}, df)$ is the summand corresponding to the origin. In other words, in this situation we may localize $Q$ a bit, without
affecting the dg category $\mfdg(Q,f)$ up to quasi-equivalence, and we will arrive at a situation in which the local cohomology functor $\R \Gamma_{V(f)}$ is unneeded.
 
In characteristic $p > 0$ things can be more complicated, and indeed $HH(\mfdg(Q,f))$ need not even have finitely generated homology.
For instance if $Q = F[x]$ and $f = x^p$ with $p = \chr(F)$, then every point of $\A^1_F$ is a singular value of the morphism $\Spec(Q) = \A^1_F \xra{z \mapsto z^p} \A^1_F$.
In this case the Corollary gives 
$$
HH_{\Z/2}(\mfdg(Q,f)) \cong \G_{(x)} (F[x] \xra{0} F[x] dx) =  \left(\frac{F[x, x^{-1}]}{F[x]} \xra{0} \frac{F[x, x^{-1}]}{F[x]} dx\right),
$$
and the homology of $HH(\Perfdg(\cA))$ is not finitely generated as a $Q$-module.  This is related to the fact that $\mfdg(Q,f)$ is not a ``homologically smooth''
$\Z/2$-graded  category over $F$. Indeed, in Theorem \ref{smooththm} below, we show:

\begin{thm} \label{introsmooththm} Let $Q$ be an essentially smooth $F$-algebra with $F$ a perfect field
  and assume $f$ is a non-zero-divisor in $Q$. 
The $\Z/2$-graded dg category $\mfdg(Q,f)$ is homologically smooth over $F$
  if and only if the origin in $\A^1_F$ is an isolated singular value of the morphism of affine schemes $\Spec(Q) \xra{f} \A^1_F$.
\end{thm}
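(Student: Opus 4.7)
The plan is to deduce the theorem from the computation of Hochschild homology in Corollary~\ref{IntroCor10}(2), together with the guiding heuristic that homological smoothness of $\mfdg(Q,f)$ over $F$ is equivalent to perfectness of $HH_{\Z/2}(\mfdg(Q,f))$ as a complex of $Q$-modules (the natural $Q$-action on $\mfdg(Q,f)$ by multiplication on the underlying modules induces a $Q$-action on $HH$). Via Corollary~\ref{IntroCor10}(2) this perfectness condition translates into a geometric condition on how the critical locus $\on{Crit}(f) := V(df)$ sits relative to $V(f)$, which in turn is equivalent to the origin being an isolated singular value.

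For the implication ``$\mfdg(Q,f)$ smooth over $F$ $\Rightarrow$ origin is isolated singular value'', I would argue as follows. If $\mfdg(Q,f)$ is homologically smooth, then its diagonal bimodule is perfect over the enveloping dg category, and unwinding the definition
$$
HH_{\Z/2}(\mfdg(Q,f)) = \mfdg(Q,f) \Lotimes_{\mfdg(Q,f) \otimes \mfdg(Q,f)^{\op}} \mfdg(Q,f)
$$
presents $HH_{\Z/2}(\mfdg(Q,f))$ as a finite iterated cone of hom-complexes $\uHom_Q(X,Y)$ in $\mfdg(Q,f)$. Each such hom-complex is a perfect $Q$-complex (a bounded complex of finitely generated projective $Q$-modules), so $HH_{\Z/2}(\mfdg(Q,f))$ is itself perfect over $Q$. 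By Corollary~\ref{IntroCor10}(2), this means $\R\G_{V(f)}(M)$ is perfect over $Q$, where $M := (\Omega^{\Z/2}_{Q/F}, df)$. Since $M$ is itself a perfect $Q$-complex, the distinguished triangle $\R\G_{V(f)}(M) \to M \to M[1/f]$ then forces $M[1/f]$ to be perfect over $Q$. But the cohomology of $M[1/f]$ is supported on $\on{Crit}(f) \cap D(f)$, and perfectness as a $Q$-complex forces this support to be a closed subset of $\Spec(Q)$. Thus $\on{Crit}(f)$ decomposes topologically as $Z_0 \sqcup Z_1$, with $Z_0 \subseteq V(f)$ closed and $Z_1$ closed in $\Spec(Q)$ and disjoint from $V(f)$; combined with the local constancy of $f$ on $Z_1$ (which is automatic in characteristic zero from $df|_{Z_1} = 0$, and can be extracted in positive characteristic using that $F$ is perfect), this gives that $0$ is an isolated point of $f(\on{Crit}(f))$.

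For the converse, ``origin is isolated singular value $\Rightarrow$ $\mfdg(Q,f)$ smooth over $F$'': by hypothesis there is a Zariski open $U \ni 0$ in $\A^1_F$ with $\on{Crit}(f) \cap f^{-1}(U) \subseteq V(f)$. Set $Q_U := Q \otimes_{F[t]} F[t]_U$; since $U \ni 0$ we have $Q/f = Q_U/f_U$, and I would argue that $\mfdg(Q,f) \simeq \mfdg(Q_U, f_U)$ (invariance of $\mfdg$ under a Zariski localization of $Q$ preserving $V(f)$), so we may assume $\on{Crit}(f) \subseteq V(f)$ from the start. In this setting I would produce a compact generator $G$ of $\mfdg(Q,f)$ --- for instance a stabilization of a Koszul-type matrix factorization associated to a generating set for the ideal cutting out $\on{Crit}(f)$ inside $Q$ --- and verify directly that $\End_{\mfdg(Q,f)}(G)$ is a smooth dg $F$-algebra, which implies homological smoothness of $\mfdg(Q,f)$.

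The main obstacle I anticipate is this last direction: producing the compact generator and proving that its endomorphism algebra is smooth over $F$. When $\on{Crit}(f)$ is zero-dimensional (the classical isolated-critical-point case), this follows from Dyckerhoff's construction of a generating Koszul matrix factorization; in the general isolated-singular-value case, where $\on{Crit}(f) \subseteq V(f)$ may be positive-dimensional, I expect to need a local-to-global argument --- reducing to the complete local situation at each critical point of the fiber over $0$, and gluing via Zariski or Nisnevich descent. The quasi-equivalence $\mfdg(Q,f) \simeq \mfdg(Q_U, f_U)$ under the initial localization also needs careful verification.
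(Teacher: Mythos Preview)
Your argument for the ``only if'' direction is essentially correct and close in spirit to the paper's: homological smoothness forces $HH_{\Z/2}(\mfdg(Q,f))$ to be perfect over $Q$ (as a retract of a finite iterated cone of hom-complexes $\Hom_Q(X,Y)$, each of which is perfect over $Q$), and Corollary~\ref{IntroCor10}(2) then translates this into the statement that $\Sing(f)\cap V(f)$ is open and closed in $\Sing(f)$. The paper runs essentially the same computation after first transferring along the Morita equivalence $\phi$ of Theorem~\ref{thm47}, but your direct version works just as well. Your detour through ``local constancy of $f$ on $Z_1$'' is unnecessary: in the paper, ``the origin is an isolated singular value'' is characterized exactly by $\Nonreg(Q/f)=V(f)\cap\Sing(f)$ being open and closed in $\Sing(f)$, which your decomposition already gives.

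The ``if'' direction has a genuine gap that you yourself flag. Your plan---produce a compact generator $G$ and verify that $\End(G)$ is a smooth dg $F$-algebra---is not carried out, and for a positive-dimensional critical locus it would require substantial new input beyond Dyckerhoff's argument. The paper avoids this entirely. It again uses the Morita equivalence
\[
\phi\colon \mfdg(Q,f)^e \xra{\ \sim\ } \mfdg^W(Q^e,f^e),\qquad W=\Nonreg(Q/f)\times_{\Spec F}\Nonreg(Q/f),
\]
from Theorem~\ref{thm47}, under which the diagonal bimodule corresponds to $h^Q=\Hom_{\cA^e}(-,Q)$. After the localization step (which the paper justifies via Briggs' Theorem~\ref{body_thm_curved_regular}, so your ``needs careful verification'' is well placed), one may assume $\Sing(f)=\Nonreg(Q/f)$; this forces $W=\Nonreg(Q^e/f^e)$, and hence the support condition disappears: $\mfdg^W(Q^e,f^e)=\mfdg(Q^e,f^e)$. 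The conclusion is then one line: since $Q^e$ is regular and $Q$ is a finitely generated $Q^e$-module, there is an exact sequence $0\to P_m\to\cdots\to P_0\to Q\to 0$ with each $P_i\in\mfdg(Q^e,f^e)$, so $h^Q$ is quasi-isomorphic to a representable module and therefore perfect. No compact generator or endomorphism-algebra analysis is needed; the Morita equivalence $\phi$ does all the work.
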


Finally, we give an  example showing how badly things can behave for imperfect fields:
Suppose $\chr(F) = p > 0$ and
there exists an element $a \in F \setminus F^p$. Take $Q = F[x, y]$
and $f = y(x^p - a)$ and set $\fm = (y, x^p -a)$, a maximal ideal of $Q$ whose residue field is the purely inseparable field extension
$F(\sqrt[p]{a})$ of $F$. 
Then  $\Nonreg(Q/f) = \{\fm\}$ and $\Sing(f) = V(x^p - a) \subseteq f^{-1}(0)$.
In particular, the only singular value of the  morphism $\Spec(Q) \xra{f} \A^1_F$ is the origin.
But, as $df = dy(x^p-a)$, the complex $(\Omega^{\Z/2}_Q, df)$ may be identified with the $\Z/2$-folding of the complex
$$
(F[x,y] \xra{x^p-a} F[x,y]) \oplus \Sigma^{-1} (F[x,y] \xra{x^p-a} F[x,y]). 
$$
This complex is not supported on $V(f)$ and thus
$HH_{\Z/2}(mf(Q,f))$ and $(\Omega^{\Z/2}_{Q/F}, df)$ differ.

\subsection{Connections with other results}
Some of the results presented here were known before, at least in some special cases:
Polishchuk and Positselski established part (1) of Corollary \ref{IntroCor10} in \cite[Sec 4.8]{PP}; in fact, they establish
such a result so long as there is an $F$-smooth subvariety $Z$ of $f^{-1}(0)$ such that
$f$ is smooth at each point of $\Spec(Q) \setminus Z$. Their result was generalized by Efimov and Positselski by allowing $Z$ to admit an $F$-smooth stratification \cite[Section
B.1.6]{EP}.
The special case of  Corollary \ref{IntroCor10} in which $f$ has just one singular point was established by Dyckerhoff \cite[Theorem 6.6]{Dyck}; i.e.,  he essentially established
Example \ref{ex76}.  See \cite{PV} for an equivariant version of this result.
Papers \cite{Ef}, \cite{CT} \cite{LP}, \cite{Segal} and \cite{Shk} contain related results.

\subsection*{Acknowledgements} The author thanks Michael Brown for many extensive conversations on the topic of this paper and the details of its development.

\section{Outline of the proofs of the main results} \label{sec:outline}

In this section we outline the proofs of the results announced in the introduction; the details are contained in the subsequent sections. We refer the reader to Section
\ref{sec:homalg} for any undefined terms appearing in this section.

For a $k$-linear dg category $C$ we write $C^\op$ for its {\em opposite} dg category and $C^e = C \otimes_k C^\op$ for its {\em enveloping} dg category. Objects of the latter are
ordered pairs $(X, Y)$ of objects of $C$ with hom complexes given by $\Hom_{C^e}((X,Y), (X',Y')) = \Hom_C(X, X') \otimes_k  \Hom_C(Y', Y)$. 
We write $\Deltaright_C$ (resp. $\Deltaleft_C$) for the right (resp. left) dg $C^e$-modules given on objects by 
$\Deltaright_C(X,Y) = \Hom_C(Y,X)$ (resp. $\Deltaleft_C(X,Y) = \Hom_C(X,Y)$). The Hochschild homology complex (over $k$) of $C$ is by definition the derived tensor product of these two modules:
\begin{equation} \label{E421}
HH(C) = HH(C/k) := \Deltaright_C \Lotimes_{C^e} \Deltaleft_C
\end{equation}
which is an object in the derived category of dg $k$-modules.

For a curved ring $\cA = (A, w)$, its {\em opposite} is the curved ring $\cA^\op := (A, -w)$, and 
its {\em enveloping algebra} (over $k$)  is $\cA^e := \cA \otimes_k \cA^\op = (A^e, w^e)$ where $A^e := A \otimes_k A$
and $w^e := w \otimes 1 - 1 \otimes w$.

A central ingredient in the proof of  Theorem \ref{MainTheorem} is the fully-faithful embedding of $k$-linear dg categories
\begin{equation} \label{E1212}
\psi: \Perfdg(\cA)^e \into \Perfdg(\cA^e)
\end{equation}
given on objects by $\psi(X,Y) = X \otimes_k Y^*$, where $Y^* := \Hom_A(Y, A) \in \Perfdg(\cA^\op)$ is the $A$-linear dual of $Y$. 
The map on hom complexes  is given by the natural isomorphisms
  $$
\begin{aligned}
  \Hom_{\Perfdg(\cA)^e}((X_1, Y_1), (X_2, Y_2)) 
&=  \Hom_A(X_1, X_2) \otimes_k \Hom_A(Y_2, Y_1) \\
&\cong X_1^* \otimes_A X_2 \otimes_k Y_2^* \otimes_A Y_1 \\ 
%&\cong X_1^* \otimes_A X_2 \otimes_k Y_1 \otimes_A Y_2^* \\
%&\cong (X_1^* \otimes_k Y_1) \otimes_{A^e} (X^2 \otimes_k Y_2^*) \\
&\cong (X_1 \otimes_k Y_1^*)^* \otimes_{A^e}  (X^2 \otimes_k Y_2^*) \\
&\cong \Hom_{\Perfdg(\cA^e)}(X_1 \otimes_k Y_1^*, X_2 \otimes_k Y_2^*). \\
\end{aligned}
$$
As we shall see, this functor is not always a Morita equivalence, and this is ultimately the reason why local cohomology enters into the formula for $HH(\Perfdg(\cA))$.

Another important fact is that the dg $\Perfdg(\cA)^e$-modules $\Deltaright$ and $\Deltaleft$ can be extended along $\psi$, as follows:
Observe that, since the multiplication map $A^e \onto A$ sends $w^e$ to $0$,
we may regard $A$ as either a left or a right curved $\cA^e$-module. Define $h^A$ to be the right dg $\Perfdg(\cA^e)$-module given on objects by
$$
h^A(Z) := \Hom_{\cA^e}(Z, A),  \, \text{ for $Z \in \Perfdg(\cA^e)$}
$$
and $t^A$ to be the left dg $\Perfdg(\cA^e)$-module given on object by
$$
t^A(Z) := A \otimes_{\cA^e} Z  \, \text{ for $Z \in \Perfdg(\cA^e)$.}
$$
For $X, Y \in \Perfdg(\cA^e)$ we have isomorphisms
$$
h^A(X \otimes_k Y^*) \cong (X^* \otimes_k Y) \otimes_{\cA^e} A \cong X^* \otimes_\cA Y \cong \Hom_{\Perfdg(\cA)}(X, Y)
$$
 and
$$
t^A(X \otimes_k Y^*) \cong A^\rght \otimes_{\cA^e} (X \otimes_k Y^*) \cong Y^* \otimes_\cA X \cong \Hom_{\Perfdg(\cA)}(Y, X).
$$
The naturality of these isomorphisms gives isomorphisms of dg $\Perfdg(\cA)^e$-modules
\begin{equation} \label{E1215}
%\begin{aligned}
  \psi_* h^A  \cong \Deltaleft \wideand
  \psi_* t^A  \cong \Deltaright
%\end{aligned}
\end{equation}
where $\psi_*$ denotes restriction of scalars along $\psi$ (see \ref{ss:dgmodules} below). 
This gives the identification
\begin{equation} \label{E422}
  HH(\Perfdg(\cA))   \cong \psi_* h^A \Lotimes_{\Perfdg(\cA)^e} \psi_* t^A.
 \end{equation}

Now, for all $X, Y \in \Perfdg(\cA)$, the curved $\cA^e$-module $X \otimes_k Y^*$ is supported 
on the subset $W := \Nonreg(\cA)\times_{\Spec(k)} \Nonreg(\cA)$ of $\Spec(A \otimes_k A) = \Spec(A) \times_{\Spec(k)} \Spec(A)$
and thus $\psi$ factors through $\Perfdg^W(\cA^e)$, the full dg subcategory of objects
supported on $W$; see Theorem \ref{thm47}.  That is, we have a commutative triangle 
\begin{equation} \label{E1230x}
\begin{tikzcd}
\Perfdg(\cA)^e \arrow[rr, hook, "\phi"] \arrow[rd, hook, "\psi"]&& \Perfdg^W(\cA^e) \arrow[dl, hook', "\iota"] \\ & \Perfdg(\cA^e)
\end{tikzcd}
\end{equation}
of fully-faithful embeddings of dg categories, where $\iota$ is an inclusion.

One may organize the proof of Theorem \ref{MainTheorem} into three major steps:

The first step is to show that the canonical map
\begin{equation} \label{Isomorphism3}
\iota_* h^A \Lotimes_{\Perfdg^W(\cA^e)} \iota_* t^A \xra{\cong} \R\G_W \left( h^A \Lotimes_{\Perfdg(\cA^e)}  t^A\right)
\end{equation}
is an  isomorphism in the derived category of dg $A$-modules.
This is part of a larger phenomenon concerning the relationship between tensor products of modules over dg categories and full
dg subcategories defined by support conditions; see Theorem \ref{thm711b} for the precise statement.

%Since this result may be of more general interest, we highlight it by giving the precise statement now:
%\begin{thm}  \label{SupportTheorem} Let $A$ be a graded-commutative ring and assume that $C$ is a pre-triangulated, $A$-linear dg category.
%  Let $Z = V(I) \subseteq \Spec(A)$ for a finitely generated homogeneous ideal $I \subseteq A$,
%  let $C^Z$ denote the full dg subcategory on the collection of objects supported on $Z$ and write $\iota: C^Z \into C$ for the inclusion functor. 
%Given dg $C$-modules $M$ and $N$, the canonical map
%$$
%\iota_* M \Lotimes_{C^Z} \iota_* N \xra{\cong} \R\G_Z(M \Lotimes_C N)
%$$
%is an isomorphism in the derived category of dg $A$-modules. 
%\end{thm}

%As a technical point, our assumptions that $k$ is excellent and $A$ is essentially of finite type over $k$ 
%ensures that $W$ is  a closed subset, so that this theorem applies and yields \eqref{Isomorphism3}. 

The second step is to prove
the fully-faithful embedding $\phi$ is a Morita equivalence, and thus the canonical map
\begin{equation} \label{Isomorphism4}
\psi_* h^A   \Lotimes_{\Perfdg(\cA)^e} \psi_* t^A \xra{\cong} \iota_* h^A \Lotimes_{\Perfdg^W(\cA^e)} \iota_* h^A
\end{equation}
is an isomorphism in the derived category of dg $A$-modules.
This step is a consequence of a theorem due to B.~Briggs presented in the appendix to this paper; see also Theorem \ref{thm47} in the body of this paper.

Combining these two steps with \eqref{E422} yields the isomorphism
\begin{equation} \label{E419b}
  HH(\Perfdg(\cA)) \cong \R\G_W \left( h^A \Lotimes_{\Perfdg(\cA^e)}  t^A\right)
\end{equation}
in the derived category of dg $A$-modules. 
The final step is a calculation of      $h^A \Lotimes_{\Perfdg(\cA^e)}  t^A$ in terms of twisted de Rham complexes.
This is carried out, for the general case,  in Section \ref{sec:nochar}. 
Section \ref{sec:char0} concerns the special case 
when $k$ is a $\Q$-algebra, in which case Theorem \ref{MainTheorem} includes assertions regarding naturality and
the relationship between the Connes' $B$ operator and the de Rham differential.
A key ingredient in the proofs of these results is the notion of ``type II'' derived tensor products and ``type II'' Hochschild homology, developed in \cite{Positselski} and \cite{PP}.

Finally, Section \ref{sec:whensmooth} builds on the results established in the proof of Theorem \ref{MainTheorem} to establish Theorem \ref{introsmooththm}, characteristing which
matrix factorization categories are homologically smooth.

\section{Background on homological algebra over dg categories} \label{sec:homalg}

In this section, we collect the needed background
concerning the theory of derived functors of dg modules on dg categories. We start by fixing notation.

\subsection{Notation and terminology} \label{subsec:notation}

Unless otherwise indicated, all gradings are $\Z$-gradings with components indexed using superscripts,  and the differential in a chain complex has degree $1$.
If $M = M^\cdot$ is a graded module, then $\Sigma^n M$ is graded by $(\Sigma^n M)^j := M^{j+n}$. If $M = (M^\cdot, d_M)$ is a complex, then $\Sigma^n M$ is the complex
$(\Sigma^n M^\cdot, (-1)^n d_M)$. 

Throughout the paper we fix a graded commutative ring $k = \bigoplus_i k^i$ that is concentrated in even degrees (i.e., $k^i = 0$ for $i$
odd), and we assume $k$ is regular and excellent  (as an ordinary ring,
upon forgetting the grading). In most applications, $k$ will be $F$, $F[t_1, \dots, t_c]$ or $F[t, t^{-1}]$ for a field $F$ and indeterminants $t, t_1, \dots, t_c$ of degree two.

\subsection{dg categories}  \label{subsec:dgcat}
Recall that a dg $k$-module is a graded $k$-module $M = \bigoplus_i M^i$ equipped with a $k$-linear endomorphism $d$ of  degree $1$ that squares to $0$.
When $k$ is an ordinary ring (i.e., $k^i = 0$ for all $i \ne 0$), this is the same thing as a complex of $k$-modules. But a dg module over $k$ should not be
confused with a complex of graded $k$-modules --- the latter determines a dg module upon totalization, but this functor is far from an equivalence. 
We  write $\Mod(k)$ for the abelian category of all dg $k$-modules, in which morphisms are degree zero  $k$-linear maps that respect the differentials.
This category comes with an internal tensor product pairing, written $- \otimes_k -$, making it a monoidal category. 

On occasion, we will need to consider complexes of dg $k$-module, 
in which case we will use subscripts, as in $\cdots \to M_j \to M_{j-1} \to \cdots$ with each $M_j$ equipped with an ``internal'' grading $M_j = \bigoplus_i M_j^i$
and differential $d_{M_j}: M_j^i \to M_j^{i+1}$. Given such a complex of dg modules, its {\em direct sum totalization} is the dg module with $\bigoplus_{i-j = m} M^i_j$ in degree $m$
and its {\em direct product totalization} is the dg module with $\prod_{i-j = m} M^i_J$ in degree $m$.

A {\em $k$-linear dg category} (also known as a {\em dg category over $k$}) is a category  enriched over the monoidal category $(\Mod(k), \otimes_k)$; that is, a $k$-linear dg category, say $C$, consists
of a collection of objects $\ob(C)$, for each pair $X, Y \in \ob(C)$ a dg $k$-module $\Hom_C(X, Y)$, and for each triple $X, Y , Z \in \ob(C)$
a morphism of dg $k$-modules $\Hom_C(X, Y) \otimes_k \Hom_C(Y, Z) \to \Hom_C(X, Z)$. This structure is required to satisfy the usual unital and associative axioms. 
When $k = \Z$ (concentrated in degree $0$), a $\Z$-linear dg category is just called a {\em dg category}.

For instance,  a $k$-linear dg category with just one object is the same thing as a dga (differential graded algebra) over $k$. 

The category of all chain complexes of abelian groups, with morphisms given by the usual chain complex of maps between two chain complexes, is a dg category.
More generally, for any graded $k$-algebra $A$, the collection of dg $A$-modules forms a $k$-linear dg category, written as $\Moddg(A)$.

For a curved ring $\cA = (A, w)$, the collection of all curved-$\cA$-modules forms a $A$-linear dg category we write as $\Moddg(\cA)$. The hom complex for a pair of curved modules
$M = (M^\nat, d_M)$ and $N =(N^\nat, d_N)$ is given by the exact same formulas for dg $A$-modules; in detail, $\Hom_{\Moddg(\cA)}(M, N)$ is the usual graded $A$-module $\Hom_A(M, N)$
equipped with differential $\del$ given by $\del(g) = d_N \circ g - (-1)^{|g|} g \circ d_M$. Recall that, when $A$ is regular, we write $\Perfdg(\cA)$ for the full dg subcategory
consisting of those curved modules $P$ such that $P^\nat$ is finitely generated and projective as a graded $A$-module.

Given a dg category $C$, we write $Z^0C$ for the ordinary, pre-additive category with the same objects as $C$ and morphisms given by
$\Hom_{Z^0C}(X, Y) = Z^0 \Hom_C(X, Y)$, the abelian group of degree $0$ cycles. The {\em homotopy category} associated to $C$, written $[C]$, is the pre-additive
category with the same objects as $C$ and with $\Hom_{[C]}(X, Y) = H^0 \Hom_C(X, Y)$. In many of the  situations of interest in this paper,  $Z^0C$ will be an exact category and
$[C]$ will be triangulated. For example, the category $Z^0 \Moddg(k)$ may be identified with $\Mod(k)$.

A {\em dg functor} between two $k$-linear dg categories, say from $C$ to $D$,  is a  functor between categories enriched over $\Mod(k)$. So, it consists of
function $F: \ob(C) \to \ob(D)$ on objects and for each pair $X, Y \in \ob(C)$ a morphism of dg $k$-modules $F: \Hom_C(X,Y) \to \Hom_D(F(X), F(Y))$. These maps are required 
to respect units and compositions.

\subsection{dg modules over dg categories} \label{ss:dgmodules}

Given a $k$-linear dg category $C$, a {\em (left) dg $C$-module} $M$ is a dg functor $M: C \to \Moddg(k)$.
So, $M$ consists of a collection of dg $k$-modules $M(X)$ indexed by $X \in \ob(C)$ together with morphisms 
$\Hom_C(X,Y) \otimes_k M(X) \to M(Y)$ of dg $k$-modules for all pairs of objects, such that the evident unital and associative axioms hold.
When $C$ has just one object, and thus may be identified with a dg $k$-algebra $A$, a left dg $C$-module is the same thing as a left dg $A$-module.
Morphisms of dg modules are given by natural transformations of dg functors; when $C$ has just one object, this coincides with the usual notion of
a morphism of dg modules over a dg algebra. In general, the collection of all left dg modules over $C$, written $\Mod(C)$, forms an abelian category. 

For any integer $i$, the {\em $i$-th suspension} of a dg $C$-module $M$, written $\Sigma^i M$, is the composition of $M$ with the $i$-th suspension functor on the category
of dg $k$-modules; i.e, $(\Sigma^i M)(X) = \Sigma^i (M(X))$ for all $X \in C$.
Similarly, if $g: M \to N$ is a morphism in $\Mod(C)$, its {\em mapping cone} (or just {\em cone} for short) is the dg $C$-module defined via composition: $\cone(g)(X) = \cone(g(X): M(X) \to N(X))$.
We have a canonical short exact sequence  $0 \to N \to \cone(g) \to \Sigma M \to 0$ of dg $C$-modules.

Just as $\Mod(k)$ may be enriched to the dg category $\Moddg(k)$, 
we may enrich the collection of all dg $C$-modules to a dg category, which we write as  $\Moddg(C)$. In detail, given dg $C$-modules $M$ and $N$,
$\Hom_{\Moddg(C)}(M, N)$ is the kernel of the map
$$
\prod_{X \in C} \Hom_{\Moddg(k)}(M(X), N(X)) \to \prod_{Y, Z \in C} \Hom_{\Moddg(k)}(  \Hom_C(Y, Z),  \Hom_{\Moddg(k)}(M(Y),N(Z)))
$$
sending a tuple $(\a_X)_{X \in C}$ to the tuple $(g \mapsto g_* \a_Y - \a_Z g_*)_{Y, Z}$, where $\Hom_{\Moddg(k)}$ refers to hom for the dg category of dg $k$-modules.
The categories $\Mod(C)$ and $\Moddg(C)$ are related by $\Mod(C) = Z^0 \Moddg(C)$.

For a fixed dg $C$-module $N$, we may interpret $\Hom_{\Moddg(C)}(-, N)$ as a functor from $\Mod(C)$ to $\Mod(k)$, and 
as such it is contravariantly left exact, and similarly  $\Hom_{\Moddg(C)}(M, -)$ is covariantly left exact, just as in more classical settings.

A {\em graded category} is a dg category $C$ in which $\Hom_C(X, Y)$ has zero differential for all objects $X$ and $Y$, and for such a category,
a {\em graded C-module} is a dg $C$-module in which all differentials are $0$.
For a graded category $C$, we write $\grMod(C)$ for the abelian category of all graded $C$-modules. 

For any chain complex $M$ we write $M^\nat$ for the graded object obtained by forgetting the differential. 
Given a dg category $C$, we write $C^\nat$ for the graded category with the same objects and homs given by $\Hom_{C^\nat}(X, Y) = \Hom_C(X, Y)^\nat$.
If $M$ is a dg $C$-module, then $M^\nat$ is the graded $C^\nat$-module given by $X \mapsto M(X)^\nat$.  From the definition of $\Hom_{\Moddg(C)}$ we have
\begin{equation} \label{E1210}
\Hom_{\Moddg(C)}(M, N)^\nat = \Hom_{\grMod(C^\nat)}(M^\nat, N^\nat).
\end{equation}

For any complex $M$, let $H(M) = \bigoplus_j H^j(M)$ be the graded object given by taking its homology. 
By $H(C)$ we mean the graded category with the same objects as $C$ with morphisms given by
$$
\Hom_{H(C)}(X, Y) = H(\Hom_C(X,Y)). 
$$
Given a (right or left) dg $C$-module $M$, we let $H(M)$ denote the graded $H(C)$-module given by $H(M)(X) = H(M(X))$.

The homotopy category associated to the dg category $\Moddg(C)$, namely $[\Moddg(C)]$,  is a triangulated category with translation functor $M \mapsto \Sigma M$
and with a distinguished triangle being, by definition, a triangle isomorphic to one of the form $M \xra{g} N \xra{\can} \cone(g) \xra{\can} \Sigma M$ for a morphism $g$ of
dg modules.  A {\em quasi-isomorphism} of dg $C$-modules is a morphism $M \to N$ in $\Mod(C)$ that induces a quasi-isomorphism in the usual sense
$M(X) \xra{\sim} N(X)$ for all $X \in C$.  A dg $C$-module $M$ is {\em quasi-trivial} if $M \to 0$ is a quasi-isomorphism. 
The {\em derived category} of a dg category $C$, written $\cD(C)$,  is the Verdier localization of the triangulated category  $[\Moddg(C)]$ given by inverting
all quasi-isomorphisms of dg $C$-modules or, equivalently, modding out by all quasi-trivial dg $C$-modules.

For a dg category $C$, we write $C^\op$ for the opposite dg category, which has  the same objects as $C$ but
its hom complexes are given by $\Hom_{C^\op}(X, Y) = \Hom_C(Y,X)$. A {\em right dg module} over $C$ is by definition a left dg module over $C^\op$.
Equivalently, a right dg $C$-module $N$ consists of a collections of dg $k$-modules $N(X)$ indexed by $X \in \ob(C)$ together with morphisms
$N(Y) \otimes_k \Hom_C(X, Y)  \to N(X)$ of dg $k$-modules that satisfy the usual unital and associative conditions.

If $\phi: D \to C$ is a dg functor between dg categories, and $M$ is a dg $C$-module, by $\phi_*(M)$ we mean the dg $D$-module given by {\em restriction of scalars}
along $\phi$; that is, $\phi_*(M)$ is the composition of dg functors $M \circ \phi$. Explicitly, $\phi_*(M)$ sends an object $X$ of $D$ to the dg $k$-module
$M(\phi(X))$ and the required pairings are given by the composition of 
$$
\Hom_D(X, Y) \otimes_k M(\phi(X)) \xra{\phi \otimes \id} \Hom_C(\phi(X), \phi(Y)) \otimes M(\phi(X)) \to M(\phi(Y)).
$$

 \subsection{Various classes of dg $C$-modules}  \label{subsec:classes}

Throughout this section, $C$ denotes a $k$-linear dg category.  We review and develop some basic notions concerning the  homological algebra of dg $C$-modules.
Many of the results presented here are drawn from \cite{Drinfeld}, \cite{Keller}, \cite{LuntsOrlov}, \cite{Orlov}, \cite{Positselski}, \cite{PP} and \cite{Toen}.
Proofs are included only for results that are not easily found in these references.

For $X \in C$, define $h_X$ to be the left dg $C$-module
$$
h_X := \Hom_C(X, -)
$$
and $h^X$ to be the right dg $C$-module
$$
h^X := \Hom_C(-, X).
$$
A left (resp. right) dg module is {\em representable} if it is isomorphic to $h_X$ (resp. $h^X)$ for some object $X$.
For any object $X$ of $C$ and any left dg $C$-module $M$, we have an isomorphism of dg $k$-modules
\begin{equation} \label{E1210b}
\Hom_{\Moddg(C)}(h_X, M) \cong M(X)
\end{equation}
and similarly for any right dg $C$-module $N$ we have
\begin{equation} \label{E1210y}
\Hom_{\Moddg(C^\op)}(h^X, N) \cong N(X)
\end{equation}

 A module is {\em free} (resp., {\em finite free}) if it is a direct
 sum (resp., finite direct sum) of suspensions of representable modules: $\bigoplus_i \Sigma^{j_i} h_{X_i}$ or $\bigoplus_i \Sigma^{j_i} h^{X_i}$.
 A dg $C$-module $F$ is {\em semi-free} if it admits a {\em semi-free filtration}, which is defined to be a filtration
$0 = F_{-1} \subseteq F_0 \subseteq F_1 \subseteq \cdots \subseteq F$ by submodules such that 
$F = \bigcup_i F_i$ and $F_i/F_{i-1}$ is free for all $i$.
All free modules are semi-free and the collection of semi-free modules is closed under suspension and arbitrary direct sums.

\begin{prop} \label{newprop47} \cite[\S 3.1]{Keller} For any dg $C$-module $M$, there exist a quasi-isomorphism $F \xra{\sim} M$ with $F$ semi-free.
\end{prop}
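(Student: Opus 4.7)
The plan is a standard inductive attachment of free modules: build $F$ as a countable union $F_0 \subseteq F_1 \subseteq \cdots$ of semi-free submodules together with compatible maps $\phi_i \colon F_i \to M$, arranged so that $H(\phi_0)$ is already surjective and each $H(\phi_{i+1})$ kills the kernel of $H(\phi_i)$.

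The key input is the dg Yoneda identity \eqref{E1210b}, which says that morphisms of dg $C$-modules $\Sigma^{-j} h_X \to M$ correspond bijectively to cycles in $M(X)^j$. To start, for every $X \in C$ and every class in $H^j(M(X))$, I pick a cycle representative $z$ and include a summand $\Sigma^{-j} h_X$ in $F_0$, with the map to $M$ classified by $z$; then $H(\phi_0)$ is surjective by construction. At stage $i+1$, for each class $[z]$ in $\ker H(\phi_i)$, represented by a cycle $z \in Z^j F_i(X)$ with a chosen witness $m \in M(X)^{j-1}$ satisfying $\phi_i(z) = d_M(m)$, I assemble these into a morphism $g \colon \bigoplus \Sigma^{-j} h_X \to F_i$ and set $F_{i+1} := \cone(g)$; the collection of witnesses provides a null-homotopy of $\phi_i \circ g$ and so induces, via the universal property of the cone, an extension $\phi_{i+1} \colon F_{i+1} \to M$ of $\phi_i$. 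Since the cone of a morphism out of a free dg module sits in a short exact sequence $0 \to F_i \to F_{i+1} \to \bigoplus \Sigma^{-j+1} h_X \to 0$ with free quotient, the inclusion $F_i \hookrightarrow F_{i+1}$ extends the semi-free filtration of $F_i$.

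Setting $F := \bigcup_i F_i$ with the evident filtration (which is semi-free by construction) and letting $\phi \colon F \to M$ be the induced map, surjectivity of $H(\phi)$ is inherited from $H(\phi_0)$, while injectivity is automatic: any class in $\ker H(\phi)$ is represented by a cycle living in some $F_i$, which by design became a boundary in $F_{i+1}$. Hence $\phi$ is a quasi-isomorphism. The only real technicality is a size issue: when $C$ or $M$ is large, the collection of homology classes to resolve at each stage may fail to be a set, but this can be handled by a cardinality bound or by allowing the construction to run transfinitely. I expect no genuine mathematical obstacle beyond careful bookkeeping of signs and degrees when making the cone construction explicit.
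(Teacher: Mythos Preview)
Your argument is correct and is essentially the standard ``attaching cells'' construction of semi-free resolutions. The paper itself does not supply a proof of this proposition; it merely cites \cite[\S 3.1]{Keller}, where precisely this inductive cone construction is carried out. So there is nothing to compare: you have reproduced the argument that the cited reference gives.

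One minor remark on the set-theoretic caveat you raise at the end: in the context of this paper (and in Keller's), dg categories are implicitly small, so the index sets at each stage are honest sets and no transfinite recursion is needed. Your bookkeeping concern about signs in the cone is legitimate but routine; the identification $F_{i+1}/F_i \cong \Sigma\!\left(\bigoplus \Sigma^{-j} h_X\right)$ as a free module, and the fact that the chosen cycles $z$ become boundaries in $F_{i+1}$, are exactly as you describe.
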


A dg $C$-module $P$ is called {\em homotopically projective}, or {\em h-projective} for short, if $\Hom_{\Moddg(C)}(P, T)$ is exact for all quasi-trivial
dg $C$-modules $T$. 
Equivalently, $P$ is h-projective if $\Hom_{\Moddg(C)}(P, -)$ sends quasi-isomorphisms of dg $C$-modules to quasi-isomorphisms of chain complexes. 
The collection of h-projective modules is closed under suspension, arbitrary co-products, and summands in the abelian category $\Mod(X)$. 
It is also closed under homotopy equivalence; for instance, all contractible dg $C$-modules are h-projective.  
A dg $C$-module $P$ is {\em graded projective} if the functor $\Hom_{\Moddg(C)}(P, -)$ sends surjections in $\Mod(C)$ to surjections
of chain complexes. This condition depends only on the graded $C^\nat$-module $P^\nat$.
A dg $C$-module $P$ is {\em semi-projective} if it is both h-projective and graded projective.
The collection of semi-projective modules is closed under suspension, arbitrary co-products, and summands in the abelian category $\Mod(X)$. 
(It is not closed under homotopy equivalence.)

All semi-free modules are semi-projective.
Writing $\SemiFreedg(C)$, $\SemiProjdg(C)$, and $\HProjdg(C)$ for the full dg subcategories of $\Moddg(C)$ consisting of the semi-free, the semi-projective, and
the h-projective objects, respectively, we have containments
$$
\SemiFreedg(C) \subseteq \SemiProjdg(C) \subseteq \HProjdg(C) \subseteq \Moddg(C).
$$

Recall that $\cD(C)$ is the Verdier localization of $[\Moddg(C)]$ given by inverting all quasi-isomorphisms.
A morphism of h-projective modules is a quasi-isomorphism if and only if it is a homotopy equivalence, and thus the canonical functors
$[\SemiFreedg(C)] \into [\SemiProjdg(C)] \into [\HProjdg(C)] \into \cD(C)$ are fully-faithful.
An by Proposition \ref{newprop47}  each is essentially onto and thus we have equivalences
\begin{equation} \label{E1212t}
 [\SemiFreedg(C)] \xra{\cong} [\SemiProjdg(C)] \xra{\cong} [\HProjdg(C)] \xra{\cong} \cD(C)
 \end{equation}
 of triangulated categories. We will often use $[\SemiFreedg(C)]$ in place of $\cD(C)$.

 \subsection{Yoneda embedding and pre-triangulated dg categories}  \label{subsec:Yoneda}
 The {\em dg Yoneda embedding} is the fully-faithful dg functor from $C$ to the dg category of right modules over $C$, 
   $$
   \Yon_\dg: C \to \SemiFreedg(C^\op) \subseteq \Moddg(C^\op)
   $$
   that is  given on objects by $\Yon_\dg(X) = h^X$ and on morphisms by the canonical isomorphism $\Hom_C(X, Y) = h^Y(X) \cong \Hom_{\Moddg(C^\op)}(h^X, h^Y)$.

 The dg Yoneda embedding induces a fully-faithful functor on the associated additive categories given by taking $0$ cycles
 $$
 \Yon: Z^0 C \into Z^0 \SemiFreedg(C^\op) 
 $$
 and a fully-faithful functor on homotopy categories
 $$
 [\Yon]: [C] \into [\SemiFreedg(C^\op)] \cong \cD(C^\op).
 $$

 The category $Z^0 \SemiFreedg(C^\op)$ comes equipped with suspensions of objects  and mapping cones of  morphisms. 
 We say $C$ is {\em pre-triangulated} if $Z^0 C$ admits suspensions and mapping cones too, that are compatible with the Yoneda embedding; that is,
 $C$ is pre-triangulated if for each integer $j$, there is a functor $\Sigma^j: Z^0C \to Z^0C$ such that $\Sigma^j \circ \Yon$ is naturally isomorphic
 to $\Yon \circ \Sigma ^j$, and for each morphism $g: X \to Y$ in $Z^0 C$, there is an object $\cone(g)$ of $C$ and morphisms $Y \to \cone(g) \to \Sigma(X)$
 such that $\Yon(Y) \to \Yon(\cone(g)) \to \Yon(\Sigma(X))$ is naturally isomorphic to $\Yon(Y) \to \cone(\Yon(g)) \to \Sigma(\Yon(X))$.
 The assignment $g \mapsto C(g)$ for morphisms in $Z^0C$ is required to be functorial, in the sense that given a commutative square
 $$
 \begin{tikzcd}
 X \ar[r]^g \ar[d, "\a"] & Y \ar[d, "\b"] \\
 X' \ar[r, "g"] & Y' \\
 \end{tikzcd}
 $$
in $Z^0 C$, the maps $(\a, \b)$ induce a map from $\cone(g)$ to $\cone(g')$, compatibly via the Yoneda embedding with the usual construction in $\Mod(C^\op)$. 
 In particular, given such a square we define its {\em totalization} as $\cone(\cone(g) \xra{(\a, \b)} \cone(g'))$. 
 As a special case of this, if $X \xra{g} Y  \xra{h} Z$ are morphisms in $Z^0C$ with $h \circ g = 0$, 
 then taking $X' = 0$ and $Y' = Z$, we define  
 $$
 \Tot(X \xra{g} Y  \xra{h} Z)  = \cone(\cone(g) \xra{(0,h)}\cone(0 \to Z))
 $$
 and we have
 $$
 \Yon\left(\Tot(X \xra{g} Y  \xra{h} Z)\right)   \cong 
 \Tot(\Yon(X) \xra{\Yon(g)} \Yon(Y)  \xra{\Yon(h)} \Yon(Z)).
 $$
 More generally, given a bounded complex 
 $$
 X_\cdot = (\cdots \to 0 \to X_m \to \cdots \to X_n \to 0 \to \cdots)
 $$
 in $Z^0(C)$, we may form its totalization in $C$ in a recursive manner, so that 
 $$
 \Yon(\Tot(X_\cdot)) \cong \Tot(\Yon(X_\cdot)). 
 $$

 For example, for a curved ring $\cA$, the dg category $\Moddg(\cA)$ is pre-triangulated, with the notions of suspension and mapping cones
 given in exactly the same was as for dg $A$-modules, and more generally the totalization of a bounded complex of objects in $\Mod(\cA)$ extends
 the usual construction for dg $A$-modules.

\subsection{Tensor products and derived tensor products} \label{ss:tensor}
 Given a right  dg $C$-module $M$ and a left dg $C$-module $N$, their {\em tensor product}, written $M \otimes_C N$, is the dg $k$-module given by the cokernel of
the map
$$
\bigoplus_{Y, Z \in C} M(Z) \otimes_k \Hom_C(Y,Z) \otimes_k N(Y) \to 
\bigoplus_{X \in C} M(X) \otimes_k N(X)
$$
that sends $m \otimes g \otimes n$ to $m \cdot g \otimes n - m \otimes g \cdot n$. 
 If $C$ has just one object whose endomorphism dga is $R$, then $M \otimes_C N$ coincides with the usual tensor product of dg $R$-modules.
 The tensor product is made into a bifunctor, from $\Mod(C^\op) \times \Mod(C)$ to $\Mod(k)$, in the evident manner.
 For a fixed $M$ and $N$, the functors $M \otimes_C -$ and $- \otimes_C N$ are both right-exact functors that commute with taking suspensions.

 From the defintion, we observe that the graded $k$-module underlying $M \otimes_C N$ does not depend on the differentials; that is,
 \begin{equation} \label{E31}
   (M \otimes_C N)^\nat = M^\nat \otimes_{C^\nat} N^\nat.
  \end{equation}

 Given an object $X \in C$, there are natural isomorphisms
 \begin{equation} \label{E1029a}
 h^X \otimes_C N \cong N(X)
 \and
 M \otimes_X h_X \cong M(X).
 \end{equation}
 These isomorphisms generalize the classical formulas $R \otimes_R N \cong N$ and $M \otimes_R R \cong M$ for a dg modules over a dga $R$.

 The tensor product of dg $C$-modules preserves arbitrary coproducts:
 Given a collection of left dg $C$-modules $\{N_i\}_{i \in I}$ we have a natural isomorphism
 \begin{equation} \label{E26a}
 M \otimes_C \bigoplus_i N_i \cong \bigoplus_i (M \otimes_ C N_i) 
 \end{equation}
 for all right dg $C$-modules $M$, and similarly in the other argument.

\begin{defn} The {\em derived tensor product} of a right dg $C$-module $M$ and a left dg $C$-module $N$, written
  $M \Lotimes_C N$, is the dg $k$-module 
  $F \otimes_C N$ where $F \xra{\sim} M$ is a chosen quasi-isomorphism such that $F$ is semi-free; see Proposition \ref{newprop47}.
\end{defn}

The usual arguments in the classical setting show that $M \Lotimes_C N$ is independent, up to isomorphism  in $\cD(k)$, of the semi-free resolution of $M$ chosen.
Moreover, we have $M \Lotimes_C N \sim M \otimes_C F'$ if $F' \xra{\sim} N$ is a quasi-isomorphism with $F'$ semi-free.

 \subsection{h-flat and semi-flat modules} \label{subsec:flat}
 A dg $C$-module $E$ is {\em homotopically flat}, or {\em h-flat} for short, if the chain complex $M \otimes_C E$ is exact
 whenever $M$ is a quasi-trivial right dg $C$-module. This is equivalent to the condition that $- \otimes_C E$ preserves quasi-isomorphisms. 
We say $E$ is {\em graded-flat} if $- \otimes_C E$ is an exact functor from $\Mod(C)$ to $\Mod(k)$. Observe that by \eqref{E31}, the graded-flat property
does not depend on differentials; that is, $E$ is a graded-flat dg $C$-module if and only if $E^\nat$ is flat as a graded $C^\nat$-module. 
Finally $E$ is {\em semi-flat} if it is both h-flat and graded-flat. 
It can be shown that $E$ is semi-flat if and only if whenever $N' \xira{\sim} N$ is an injective quasi-isomorphism of right dg $C$-modules,
$N' \otimes_C M \xira{\sim} N \otimes_C M$ is an injective quasi-isomorphism of dg $k$-modules. We extend these notions to right modules in the evident way.

\begin{prop} \label{prop1111} The collection of h-flat dg $C$-modules
 is closed under suspension, summands, and filtered colimits taken in the abelian category $\Mod(C)$.
 The totalization of any bounded below complex of h-flat dg $C$-modules is again h-flat.

The collection of semi-flat modules is also closed under suspension, summands and filtered colimits. 
The totalization of a complex of h-flat dg $C$-modules that is bounded (both above and below) is again h-flat.
This collection is also closed under extension in $\Mod(C)$.

All semi-free dg $C$-modules are semi-flat.
\end{prop}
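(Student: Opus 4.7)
My plan is to treat the h-flat and semi-flat cases in parallel, pulling each closure property back to an analogous classical statement via the tensor-product construction. The graded-flat half of semi-flatness is purely a graded question: by \eqref{E31} the graded module underlying $M \otimes_C E$ is $M^\nat \otimes_{C^\nat} E^\nat$, so closure of graded-flat modules under suspension, summands, filtered colimits, direct sums, and extensions transports directly from the (standard) corresponding statements for flat graded $C^\nat$-modules. What requires genuine argument is therefore the h-flat side, together with the final assertion that semi-free modules are semi-flat.

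For the easy closure properties on the h-flat side I would argue as follows. Suspension is handled by $M \otimes_C \Sigma E \cong \Sigma(M \otimes_C E)$. For summands, a direct summand of an acyclic complex of $k$-modules is acyclic, so if $E_1 \oplus E_2$ is h-flat and $M$ is quasi-trivial, then each $M \otimes_C E_i$ is acyclic. For filtered colimits, the tensor product is defined by a cokernel and \eqref{E26a} shows it preserves arbitrary coproducts, hence all colimits; combined with exactness of filtered colimits in $\Mod(k)$ this gives the claim. Closure of semi-flat under extensions follows because when $E''$ is graded-flat the sequence $0 \to E' \to E \to E'' \to 0$ is degreewise split, so tensoring with any $M$ produces a short exact sequence of chain complexes; the associated long exact homology sequence supplies 2-of-3 for acyclicity.

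The one step I expect to require real care is the totalization assertion. The key point is that the direct-sum totalization commutes with the tensor product, both being colimits, so for a bounded-below complex $X_\cdot$ of h-flat modules and a quasi-trivial $M$ one has
\[
M \otimes_C \Tot(X_\cdot) \cong \Tot(M \otimes_C X_\cdot),
\]
which is the direct-sum totalization of a bounded-below complex of acyclic complexes of $k$-modules; this is acyclic by the classical spectral sequence of the horizontal filtration (or equivalently by induction on length followed by passage to the colimit of naive truncations). For the bounded, semi-flat variant the same argument controls h-flatness, and graded-flatness of the totalization of a bounded complex of graded-flat modules is immediate from the extension closure already established.

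For the final assertion, by \eqref{E1029a} each representable $h_X$ satisfies $M \otimes_C h_X \cong M(X)$, and the evaluation functor $M \mapsto M(X)$ is clearly exact and preserves acyclicity, while $h_X^\nat$ is representable over $C^\nat$ and hence graded-flat; so $h_X$ is semi-flat. Free modules are semi-flat by closure under suspension and direct sums. Given a semi-free filtration $0 = F_{-1} \subseteq F_0 \subseteq F_1 \subseteq \cdots \subseteq F$, each $F_i$ is semi-flat by induction using the extension closure, and then $F = \colim_i F_i$ is semi-flat by closure under filtered colimits.
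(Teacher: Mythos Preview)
The paper does not actually supply a proof of this proposition; it is one of the background results drawn from the cited references (see the opening of Section~\ref{subsec:classes}). So there is nothing to compare against, and your argument stands on its own. It is essentially correct, with one inaccuracy worth fixing.

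In the extension-closure step for semi-flat modules you assert that when $E''$ is graded-flat the sequence $0 \to E' \to E \to E'' \to 0$ is ``degreewise split''. This is too strong: graded-flat is not graded-projective, and there is no reason for a splitting (even at the level of graded $k$-modules). What you really need, and what \emph{is} true, is that for every right dg $C$-module $M$ the induced sequence
\[
0 \to M \otimes_C E' \to M \otimes_C E \to M \otimes_C E'' \to 0
\]
is exact. By \eqref{E31} the underlying graded sequence is $0 \to M^\nat \otimes_{C^\nat} (E')^\nat \to M^\nat \otimes_{C^\nat} E^\nat \to M^\nat \otimes_{C^\nat} (E'')^\nat \to 0$, and graded-flatness of $(E'')^\nat$ gives exactness on the left by the usual $\Tor_1$ vanishing. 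With this correction your long exact homology sequence argument goes through unchanged. The remaining steps (suspension, summands, filtered colimits, totalization via the bounded-below spectral sequence, and the semi-free $\Rightarrow$ semi-flat induction) are all fine.
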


\begin{prop} \label{hflatcor} For a right dg $C$-module $M$ and a left dg $C$-module $N$, given a quasi-isomorphism $G \xra{\sim} N$ such that $G$ is semi-flat 
we have an isomorphism
 $$
 M \otimes_C G \xra{\cong}  M \Lotimes_C N
 $$
 in $\cD(k)$, and similarly for semi-flat resolutions of $M$.
 \end{prop}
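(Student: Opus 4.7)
The plan is to reduce to the defining formula $M \Lotimes_C N = F \otimes_C N$ coming from a semi-free resolution $F \xra{\sim} M$, and then to interpolate between this and $M \otimes_C G$ via a zig-zag of quasi-isomorphisms through the object $F \otimes_C G$.

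First, I would invoke Proposition \ref{newprop47} to choose a semi-free resolution $q: F \xra{\sim} M$, so that by definition $M \Lotimes_C N$ is represented by $F \otimes_C N$. Let $p: G \xra{\sim} N$ be the given semi-flat resolution. Form the commutative square in $\Mod(k)$
$$
\begin{tikzcd}
F \otimes_C G \ar[r, "\id \otimes p"] \ar[d, "q \otimes \id"'] & F \otimes_C N \ar[d, "q \otimes \id"] \\
M \otimes_C G \ar[r, "\id \otimes p"'] & M \otimes_C N.
\end{tikzcd}
$$
The strategy is to show that the top arrow and the left arrow are each quasi-isomorphisms, which then yields the desired isomorphism $M \otimes_C G \cong F \otimes_C N = M \Lotimes_C N$ in $\cD(k)$ via the zig-zag through $F \otimes_C G$.

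For the top arrow, $F$ is semi-free, hence semi-flat by the last assertion of Proposition \ref{prop1111}, so in particular h-flat; consequently $F \otimes_C -$ preserves quasi-isomorphisms, and applying it to $p$ yields a quasi-isomorphism. For the left arrow, $G$ is semi-flat by hypothesis, so $- \otimes_C G$ preserves quasi-isomorphisms, and applying it to $q$ yields a quasi-isomorphism. This establishes the claim and shows that the composite
$$
M \otimes_C G \xla{\sim} F \otimes_C G \xra{\sim} F \otimes_C N = M \Lotimes_C N
$$
is an isomorphism in $\cD(k)$; a direct check shows it agrees with the natural map.

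For the ``similarly'' clause, given a semi-flat resolution $H \xra{\sim} M$ of $M$, I would run the symmetric argument by choosing a semi-free resolution $F' \xra{\sim} N$ (so $M \Lotimes_C N$ is represented by $M \otimes_C F' \simeq H \otimes_C F'$) and using the zig-zag through $H \otimes_C F'$: the arrow $H \otimes_C F' \to H \otimes_C N$ is a quasi-isomorphism because $H$ is h-flat, and the arrow $H \otimes_C F' \to M \otimes_C F'$ is a quasi-isomorphism because $F'$ is semi-free and hence semi-flat. No real obstacle arises; the proof is a formality once one has the closure of semi-flat modules under semi-freeness (Proposition \ref{prop1111}) and the standard fact that semi-free resolutions compute the derived tensor product.
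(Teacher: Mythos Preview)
The paper does not actually supply a proof of this proposition; it is stated as background with the remark that ``proofs are included only for results that are not easily found in these references.'' Your argument is correct and is exactly the standard one: pick a semi-free resolution $F \xra{\sim} M$, form the square with corner $F \otimes_C G$, and use that both $F$ and $G$ are h-flat (via Proposition~\ref{prop1111}) to see that the top and left arrows are quasi-isomorphisms. The resulting zig-zag through $F \otimes_C G$ gives the isomorphism in $\cD(k)$. One small remark: your phrase ``a direct check shows it agrees with the natural map'' is slightly imprecise, since the paper only asserts the existence of an isomorphism in $\cD(k)$ and does not specify a particular map $M \otimes_C G \to M \Lotimes_C N$ at the chain level; the zig-zag itself \emph{is} the isomorphism, so you can simply omit that sentence.
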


 \subsection{Extension of scalars, Morita equivalences} \label{subsec:Morita}
 Let $\phi: D \to C$ be a dg functor between dg categories and recall that $\phi_*$ denotes restriction
of scalars of dg modules along $\phi$.
Given a (right or left) dg $D$-module $M$, we write $\phi^*(M)$ for the dg $C$-module obtained by {\em extension of scalars} along $\phi$.
When $M$ is a right module, $\phi^*(M)$ is given on objects by 
 $$
 \phi^*(M)(X) =  M \otimes_D \phi_*(h^C_X), \text{ for all $X \in C$,}
 $$
 and similarly for left modules.

 By construction, $(\phi^*, \phi_*)$ form an adjoint pair: there is a natural isomorphism
 $\Hom_{\Moddg(C)} (\phi^*(M), N) \cong \Hom_{\Moddg(D)} (M, \phi_*(N))$ for any dg $D$-module $M$ and dg $C$-module $N$.
 In particular, there is a natural transformation from the identity functor on $\Moddg(M)$ to $\phi_* \circ \phi^*$ and
 a natural transformation from $\phi^* \circ \phi_*$ to the identity functor on
 $\Moddg(C)$.

 If $M = h_D^Y$ for $Y \in D$,  then we have $h^Y_D \otimes_D \phi_*(h^C_X) \cong \phi_*(h^C_X)(Y) = \Hom_C(X,\phi(Y))$,
 and thus there is a natural isomorphism
 $$
 \phi^*(h_D^Y) \cong h_C^{\phi(Y)}.
 $$
 This gives a commutative square of dg functors
 \begin{equation} \label{E1230}
 \begin{tikzcd}
   D \arrow[r, "\phi"] \arrow[d, hook, "\Yon^D_\dg"] & C \arrow[d, hook, "\Yon^C_\dg"] \\
   \Moddg(D^\op) \arrow[r, "\phi^*"] & \Moddg(C^\op)
 \end{tikzcd}
 \end{equation}
 in which the vertical maps of the dg Yoneda embeddings.

 Unlike for restriction of scalars, extension of scalars fails, in general, to preserve quasi-isomorphisms, and we will have need of its derived version. 
 If $F$ is a semi-free dg $D$-module, then $\phi^*(F)$ is a semi-free dg $C$-module, and we thus have an induced commutative square
 \begin{equation} \label{E1230b}
 \begin{tikzcd}
   D \arrow[r, "\phi"] \arrow[d, hook, "\Yon^D_\dg"] & C \arrow[d, hook, "\Yon^C_\dg"] \\
   \SemiFreedg(D^\op) \arrow[r, "\phi^*"] & \SemiFreedg(C^\op)
 \end{tikzcd}
 \end{equation}
 of dg categories.
 We define $\L \phi^*: \cD(D^\op) \to \cD(C^\op)$, {\em derived extension of scalars along $\phi$}, as the composition
 of
 $$
 \cD(D^\op) \cong [\SemiFreedg(D^\op)] \xra{\phi_*} [\SemiFreedg(C^\op)] \cong \cD(C^\op).
 $$
 So, we have commutative square of triangulated categories
 \begin{equation} \label{E1230x}
 \begin{tikzcd}
   {[D]} \arrow[r, "\phi"] \arrow[d, hook,"{[\Yon^D]}"] & {[C]} \arrow[d, hook, "{[\Yon^C]}"] \\
   \cD(D^\op) \arrow[r, "\L\phi^*"] & \cD(C^\op).
 \end{tikzcd}
 \end{equation}
 The functors $(\L\phi^*: \cD(D) \to \cD(C), \phi_* \cD(C) \to \cD(D))$ form an adjoint pair.
 Moreover, we have:

 \begin{prop} \label{prop1230c} For a right dg $D$-module $M$  and a left dg $C$-module $N$, there is a natural isomorphism
   $$
   \phi^*(M) \otimes_C N \cong M \otimes_D \phi_*(N)
   $$
   in $\Mod(k)$ and a natural isomorphism
   $$
   \L \phi^*(M) \Lotimes_C N \cong M \Lotimes_D \phi_*(N)
   $$
   in $\cD(k)$. 
 \end{prop}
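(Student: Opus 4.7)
The plan is to first establish the underived isomorphism by a direct construction and reduction to representable modules, and then bootstrap to the derived statement via a semi-free resolution.

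For the first part, I would construct the natural comparison map
$$
\alpha: \phi^*(M) \otimes_C N \lra M \otimes_D \phi_*(N)
$$
by the formula $\alpha\bigl([m \otimes_D g] \otimes n\bigr) = m \otimes_D (g \cdot n)$, where $m \in M(Y)$, $g \in \phi_*(h^C_X)(Y) = \Hom_C(X, \phi(Y))$, $n \in N(X)$, and $g \cdot n \in N(\phi(Y)) = \phi_*(N)(Y)$ is the image of $n$ under the left $C$-action of $g$. Verifying that $\alpha$ respects the defining relations of both tensor products reduces to observing that the left $D$-action on $\phi_*(N)$ is implemented by $\phi$ applied to $D$-morphisms, and similarly for the right $C$-action on $\phi^*(M)$.

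To prove $\alpha$ is an isomorphism I would reduce to the case $M = h^Y_D$. Here \eqref{E1230} yields $\phi^*(h^Y_D) \cong h^{\phi(Y)}_C$, and both sides collapse to $N(\phi(Y))$ via \eqref{E1029a}, under which $\alpha$ becomes the evident identification. Extension to free modules is immediate from \eqref{E26a}. For arbitrary $M$, working at the level of underlying graded modules (which determine both tensor products by \eqref{E31}), I would choose a presentation $F_1 \to F_0 \to M^\nat \to 0$ by free graded modules and apply the five lemma, using that both $-\otimes_C N$ and $-\otimes_D \phi_*(N)$ are right-exact functors that commute with direct sums.

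For the derived statement, I would fix a semi-free resolution $F \xra{\sim} M$ of $M$ as a right dg $D$-module. Then $\L\phi^*(M) \cong \phi^*(F)$ by the construction of derived extension of scalars, and $\phi^*(F)$ is again semi-free hence semi-flat by Proposition \ref{prop1111}. Proposition \ref{hflatcor} then gives $\L\phi^*(M) \Lotimes_C N \cong \phi^*(F) \otimes_C N$; the first part of the proposition identifies this with $F \otimes_D \phi_*(N)$; and a second application of Proposition \ref{hflatcor}, using that $F$ itself is semi-flat, rewrites this last object as $F \Lotimes_D \phi_*(N) \cong M \Lotimes_D \phi_*(N)$.

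The main obstacle I anticipate is the well-definedness check for $\alpha$, since one must carefully navigate the iterated tensor product structure $M \otimes_D \phi_*(h^C_X) \otimes_k N(X)$ and confirm compatibility with both the inner $D$-relations and the outer $C$-relations. Once this is in hand, everything else is formal, proceeding by right exactness in the underived case and by the semi-flatness of semi-free modules in the derived case.
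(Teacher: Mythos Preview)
Your proposal is correct and follows essentially the same strategy as the paper: verify the underived isomorphism by reducing to representables and using right-exactness, then deduce the derived statement from a semi-free resolution of $M$ (using that $\phi^*$ preserves semi-freeness).

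The one minor difference is that you reduce in the $M$-variable (taking $M = h^Y_D$ and invoking $\phi^*(h^Y_D) \cong h^{\phi(Y)}_C$), whereas the paper fixes $M$ and reduces in the $N$-variable: for $N = h_X^C$ one has $\phi^*(M) \otimes_C h_X^C \cong \phi^*(M)(X) = M \otimes_D \phi_*(h_X^C)$ directly from the definition of $\phi^*$, which makes the base case immediate without needing \eqref{E1230}. Both reductions work; the paper's is marginally slicker because the representable case is literally the definition of extension of scalars, while yours trades that for a slightly more explicit description of the comparison map.
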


 \begin{proof}
   Fixing $M$, let $\cS_M$ be the collection of those dg $C$-modules $N$ for which the canonical map  $\phi^*(M) \otimes_C N \to M \otimes_D \phi_*(N)$ is an isomorphism.
If $N = h_X^C$ then $\phi^*(M) \otimes_C h_X^C \cong \phi^*(M)(X) \cong M \otimes_D \phi_*(h_X^C)$, and thus $h_X^C \in \cS_M$ for all $X$.
Since this is a natural transformation of functors from $\Mod(C)$ to $\Mod(k)$ that preserve all coproducts, 
$\cS_M$ contains all free modules. Since the two functors are left-exact, it follows that $\cS_M$ contains all dg $C$-modules.

For the second assertion,   we may assume $M$ is semi-free, in which case $\L \phi^*(M) = \phi^*(M)$ is also semi-free, and thus the second isomorphism follows from the first. 
\end{proof}

 \begin{defn} A dg functor $\phi: D \to C$ is a {\em Morita equivalence} if the induced functor
   $\phi^*: \SemiFreedg(D) \to \SemiFreedg(C)$ is a quasi-equivalence or, equivalently,
 $\L \phi^*: \cD(D) \to \cD(C)$ is an equivalence of triangulated categories.
 \end{defn}

The square \eqref{E1230b} shows that if $\phi$ is a Morita equivalence, then $\phi$ must be quasi-fully-faithful.
 Since $\L \phi^*$ is adjoint to $\phi_*: \cD(C) \to \cD(D)$, if $\phi$ is a Morita equivalence,
 then $\phi_*$ is also an equivalence, and it is the inverse up to natural isomorphism of $\L \phi^*$.

 Every quasi-equivalence is a Morita equivalence, and this generalizes to the following result. Recall that the {\em thick closure} of
 a collection of objects $S$ in a triangulated category $\cT$, written $\Thick_{\cT}(S)$, is the smallest triangulated subcategory of $\cT$ that is closed under taking summands.

 \begin{prop} \label{thickprop}
   \cite[1.15]{LuntsOrlov} Suppose $\phi:D \into C$ is a quasi-fully-faithful dg functor and $C$ is pre-triangulated. 
   If $\Thick_{[C]}(\{\phi(X) \mid X \in D\}) = [C]$, then $\phi$ is a Morita equivalence. 
 \end{prop}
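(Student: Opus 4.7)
The plan is to verify that $\L\phi^*: \cD(D) \to \cD(C)$ is an equivalence by establishing fully-faithfulness and essential surjectivity separately, exploiting the adjunction with $\phi_*$ together with the commutative square \eqref{E1230b}.

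For fully-faithfulness, let $\cS \subseteq \cD(D^\op)$ be the full subcategory on which the unit $\eta_N: N \to \phi_* \L\phi^*(N)$ is an isomorphism in $\cD(D^\op)$. Both $\L\phi^*$ (as a left adjoint) and $\phi_*$ (by direct inspection of its definition on objects, $\phi_*(M)(X) = M(\phi(X))$) preserve triangles and arbitrary coproducts, so $\cS$ is a localizing triangulated subcategory. For the representable $h^Y$ with $Y \in D$, the square \eqref{E1230b} gives $\L\phi^*(h^Y) \cong h^{\phi(Y)}$; unwinding the adjunction via the Yoneda formula \eqref{E1210y} identifies $\eta_{h^Y}$ evaluated at $X \in D$ with the map $\Hom_D(X,Y) \to \Hom_C(\phi(X),\phi(Y))$ induced by $\phi$, which is a quasi-isomorphism by the quasi-fully-faithfulness assumption. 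Hence $\cS$ contains all representables, and by the generating lemma below, $\cS = \cD(D^\op)$.

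For essential surjectivity, the essential image $\cT$ of $\L\phi^*$ is a localizing, thick triangulated subcategory of $\cD(C^\op)$: closure under coproducts uses that $\L\phi^*$ is a left adjoint, while closure under summands uses fully-faithfulness together with the idempotent-completeness of $\cD(D^\op)$ and $\cD(C^\op)$ (an idempotent on $\L\phi^*(N)$ lifts to an idempotent on $N$ and then splits). By construction $\cT$ contains $h^{\phi(Y)} \cong \L\phi^*(h^Y)$ for every $Y \in D$. Since $C$ is pre-triangulated, the homotopy Yoneda embedding $[C] \hookrightarrow \cD(C^\op)$ is a fully-faithful triangulated functor, so the hypothesis $\Thick_{[C]}(\{\phi(X)\}) = [C]$ transports to give $h^Z \in \Thick_{\cD(C^\op)}(\{h^{\phi(X)}\}) \subseteq \cT$ for every $Z \in C$. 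A second application of the generating lemma then yields $\cT = \cD(C^\op)$.

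The main obstacle is the generating lemma used twice above: that the localizing subcategory of $\cD(D^\op)$ (or $\cD(C^\op)$) generated by the representables exhausts the derived category. This is established via the semi-free filtration $0 \subseteq F_0 \subseteq F_1 \subseteq \cdots$ of any semi-free module $F$, which presents $F$ as the sequential homotopy colimit of the $F_i$ --- objects built from free modules (coproducts of shifted representables) by finitely many extensions --- and such a colimit is itself a cone of a map between two coproducts. Closure under coproducts, suspensions, and triangles therefore suffices to absorb every semi-free module, and hence every object of $\cD$ (via Proposition \ref{newprop47}), into the localizing subcategory in question.
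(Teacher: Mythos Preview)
The paper does not supply its own proof of this proposition; it merely cites \cite[1.15]{LuntsOrlov}. Your argument is correct and follows the standard route one finds in the literature: verify that the unit of the adjunction $(\L\phi^*,\phi_*)$ is an isomorphism on representables (using quasi-fully-faithfulness), propagate to all of $\cD(D^\op)$ via a localizing-subcategory argument, then use the pre-triangulated hypothesis to transport the thick-generation assumption along the Yoneda embedding and conclude that the essential image of $\L\phi^*$ contains all representables of $\cD(C^\op)$, hence everything. The ``generating lemma'' you invoke is exactly what the semi-free filtration and Proposition~\ref{newprop47} provide.
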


The following will play an important role in establishing isomorphism \eqref{Isomorphism4}. 

 \begin{prop} \label{Moritaprop}
   Assume $\phi: D \to C$ is a Morita equivalence, $M$ is a right dg $C$-module and $N$ is a left dg $C$-module.
   Then the canonical map
   $$
   \phi_*(M) \Lotimes_D \phi_*(N) \xra{\cong} M \Lotimes_C N
   $$
   is a an isomorphism in the derived category of dg $k$-modules. 
 \end{prop}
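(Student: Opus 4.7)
The plan is to combine the derived form of Proposition~\ref{prop1230c} with the observation that a Morita equivalence makes $\phi_*$ and $\L\phi^*$ into mutually inverse equivalences of derived categories, so in particular the counit $\L\phi^*(\phi_*(M)) \to M$ is an isomorphism in $\cD(C^{\op})$. This last point is immediate: any adjunction whose left adjoint is an equivalence is automatically an adjoint equivalence, so both the unit and counit are invertible.

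Concretely, I would apply the derived form of Proposition~\ref{prop1230c} to the right dg $D$-module $\phi_*(M)$ and the left dg $C$-module $N$, yielding a natural isomorphism
$$
\L\phi^*(\phi_*(M)) \Lotimes_C N \;\cong\; \phi_*(M) \Lotimes_D \phi_*(N)
$$
in $\cD(k)$. Composing with the derived counit $\L\phi^*(\phi_*(M)) \xra{\cong} M$ in $\cD(C^{\op})$ (to which $- \Lotimes_C N$ applies functorially) produces the required isomorphism $\phi_*(M) \Lotimes_D \phi_*(N) \cong M \Lotimes_C N$.

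To confirm that this agrees with the canonical comparison map of the statement, I would work with explicit resolutions. Pick a semi-free resolution $\pi: F \xra{\sim} \phi_*(M)$ in $\Moddg(D^{\op})$ via Proposition~\ref{newprop47}. Since $\phi^*(h_D^Y) \cong h_C^{\phi(Y)}$, the module $\phi^*(F)$ is semi-free, and the Morita hypothesis forces the composite $\phi^*(F) \xra{\phi^*(\pi)} \phi^*\phi_*(M) \to M$ (where the second arrow is the counit) to be a quasi-isomorphism, hence a semi-free resolution of $M$. The underived isomorphism $\phi^*(F) \otimes_C N \cong F \otimes_D \phi_*(N)$ from Proposition~\ref{prop1230c} then simultaneously computes both derived tensor products, and tracing the canonical map $\phi_*(M) \otimes_D \phi_*(N) \to M \otimes_C N$ (defined by the inclusion of tensor-relation indices) through this identification shows that the resulting isomorphism in $\cD(k)$ is the canonical one. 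The only real content is the invertibility of the counit $\L\phi^*\phi_* \to \id$, which is precisely where the Morita hypothesis is used; everything else is formal bookkeeping with the adjunction identity of Proposition~\ref{prop1230c}.
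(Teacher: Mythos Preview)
Your argument is correct and follows essentially the same route as the paper's proof: apply the derived isomorphism of Proposition~\ref{prop1230c} with $\phi_*(M)$ in place of $M$ to get $\phi_*(M)\Lotimes_D\phi_*(N)\cong \L\phi^*\phi_*M\Lotimes_C N$, and then use that the counit $\L\phi^*\phi_*M\to M$ is an isomorphism since $\phi$ is a Morita equivalence. Your additional care in verifying that the resulting isomorphism is the canonical comparison map is a nice touch the paper omits.
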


 \begin{proof} By Proposition \ref{prop1230c} we have
   $\phi_*(M) \Lotimes_D \phi_*(N) \cong \L\phi^* \phi_* M \Lotimes_C N$  and if $\phi$ is a Morita equivalence, then the canonical map $\L\phi^* \phi_* M \to M$ is an isomorphism
   in $\cD(C)$, so that  $\L\phi^* \phi_* M \Lotimes_C N  \cong M \Lotimes_C N$.
   \end{proof}

   \begin{ex} \label{ex1212}
   Let $R$ be an ordinary commutative ring and let $\Perfdg(R)$, the dg category of all bounded complexes of finitely generated projective $R$-modules.
   We may regard $R$ as the full dg subcategory of $\Perfdg(R)$ on the one object $R \in \Perfdg(R)$, and we  write $\iota: R \into \Perfdg(R)$ for the inclusion functor.

   Given a pair of $R$-modules $M$ and $N$, we may extend them to dg $\Perfdg(R)$-modules $\iota^* M$ and $\iota^* N$ via extension of scalars along $\iota$. (These extensions
   coincide with their derived versions.) 
   Explicitly, $\iota^*{N}$ is the left dg $\Perfdg(R)$-module $P \mapsto P \otimes_R N$ and  $\iota^*{M}$ is the right dg $\Perfdg(R)$-module $P \mapsto M \otimes_R P^*$.
   Note that $\iota_* \iota^*{M}$ is the $R$-module $M \otimes_R R^* \cong M$; i.e., $\iota_* \iota^*{M} \cong M$.
   Similarly, $\iota_* \iota^*{N} \cong N$.
  
   Since $\Perfdg(R)$ is pre-triangulated and the thick closure of $R$ in $[\Perfdg(R)]$ is all of $[\Perfdg(R)]$, by Propositions \ref{thickprop} and \ref{Moritaprop}, we have
   $$
   \iota^*{M} \Lotimes_{\Perfdg(R)} \iota^*{N}  \cong M \Lotimes_R N.
   $$
 \end{ex}

  \section{Derived tensor product for dg categories with supports} \label{sec:thirdiso}

  In this section we prove the theorem below. 
  As an immediate consequence, the  map \eqref{Isomorphism3} is an isomorphism.
  (For the latter, our assumptions that $k$ is excellent and $A$ is essentially of finite type over $k$ 
ensures that $W$ is a Zariski closed subset.)

\begin{thm}  \label{thm711b} Let $A$ be a graded commutative ring concentrated in even degrees and assume that $C$ is a pre-triangulated, $A$-linear dg category.
  Let $Z = V(I) \subseteq \Spec(A)$ for a finitely generated homogeneous ideal $I \subseteq A$,
  let $C^Z$ denote the full dg subcategory on the collection of objects supported on $Z$ and write $\iota: C^Z \into C$ for the inclusion functor. 
For any right dg $C$-module $M$ and left dg $C$-module $N$,  the canonical map
$$
\iota_* M \Lotimes_{C^Z} \iota_* N \xra{\cong} \R\G_Z(M \Lotimes_C N)
$$
is an isomorphism in $\cD(A)$. 
\end{thm}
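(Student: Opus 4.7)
My plan is to identify the derived restriction--extension functor $\L\iota^*\iota_*$ with pointwise local cohomology $\R\G_Z$, the latter viewed as an endofunctor on dg $C$-modules by means of the $A$-linear structure of $C$. Once this identification is in hand, the theorem follows quickly: Proposition~\ref{prop1230c} gives an isomorphism
$$\iota_* M \Lotimes_{C^Z} \iota_* N \;\cong\; M \Lotimes_C \L\iota^*\iota_* N,$$
and since $\R\G_Z$ is computed on any dg $A$-module $E$ by $E \Lotimes_A K^\infty(\underline{f})$ for a choice of generators $\underline{f} = (f_1, \ldots, f_n)$ of $I$ (with $K^\infty$ the stable Koszul complex), the $A$-linearity of $C$ allows $K^\infty(\underline{f})$ to be commuted past $M \Lotimes_C (-)$, giving $M \Lotimes_C \R\G_Z N \cong \R\G_Z(M \Lotimes_C N)$.

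The first step is to verify that $\L\iota^*\colon \cD(C^Z) \to \cD(C)$ is fully faithful, equivalently that the unit $\id \to \iota_*\L\iota^*$ is an isomorphism. On representables $h_W$ with $W \in C^Z$, this is immediate from the fully-faithfulness of $\iota$ as a dg functor: $\L\iota^* h_W \cong h_W$ in $\cD(C)$, and restricting back along $\iota$ tautologically recovers $h_W$ in $\cD(C^Z)$. The general case follows by extending through semi-free resolutions.

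The central step is to construct a natural isomorphism $\L\iota^*\iota_* h_X \cong \R\G_Z h_X$ for each representable left module $h_X$, $X \in C$; the case of general $N$ then follows because both functors are triangulated and preserve arbitrary coproducts (the latter using that $K^\infty(\underline{f})$ is a bounded complex of flat $A$-modules, which in turn uses the finite generation of $I$). For this I use the pre-triangulated structure of $C$ to form, for each $\vec m \in \N^n$, the iterated Koszul object $K(\underline{f}^{\vec m}; X) \in C$ by coning off the multiplications $f_i^{m_i}\colon(-)\to(-)$ in sequence. Since each $f_i^{m_i}$ then acts null-homotopically on $K(\underline{f}^{\vec m}; X)$, this object lies in $C^Z$, and the corresponding representable $h_{K(\underline{f}^{\vec m}; X)}$ in $\cD(C)$ is naturally identified with $h_X \Lotimes_A K^{\mathrm{Kos}}(\underline{f}^{\vec m})$. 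Realizing $K^\infty(\underline{f})$ as a homotopy colimit of the Koszul complexes $K^{\mathrm{Kos}}(\underline{f}^{\vec m})$ then yields
$$\R\G_Z h_X \;\simeq\; \hocolim_{\vec m}\, h_{K(\underline{f}^{\vec m}; X)},$$
which visibly lies in the essential image of $\L\iota^*$ since each term does and $\L\iota^*$ preserves colimits. On the other hand, for every $Y \in C^Z$ the dg $A$-module $\Hom_C(X, Y)$ is already $I$-torsion, so the canonical map $\R\G_Z \Hom_C(X, Y) \to \Hom_C(X, Y)$ is an isomorphism; consequently $\iota_*(\R\G_Z h_X) \cong \iota_* h_X$. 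Combining this with the full-faithfulness of $\L\iota^*$ proved above gives $\R\G_Z h_X \simeq \L\iota^*\iota_*(\R\G_Z h_X) \simeq \L\iota^*\iota_* h_X$, as required.

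I expect the central step to be the main obstacle: realizing $\R\G_Z h_X$ as a homotopy colimit of representables that come from $C^Z$. This hinges essentially on both the pre-triangulated hypothesis on $C$ (needed to form the iterated Koszul cones $K(\underline{f}^{\vec m}; X)$ inside $C$ itself) and the finite generation of $I$ (needed to realize $K^\infty(\underline{f})$ as a bounded Koszul-type complex). Without these two hypotheses the identification of $\L\iota^*\iota_*$ with $\R\G_Z$ would not go through.
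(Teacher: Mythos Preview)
Your proposal is correct and takes a genuinely different route from the paper. The paper argues directly on tensor products: it replaces $M$ by a semi-free module $F$, proves (Lemma~\ref{lem1214}) that $\iota_*\R\G_Z(F)$ is semi-flat over $C^Z$ (using the same Koszul-object observation you use), so that the derived tensor product over $C^Z$ can be computed as an ordinary one, and then reduces to $N=h_X$ representable and computes both sides explicitly via \eqref{E323var2} and Lemma~\ref{lem323e}. You instead prove a functorial statement, namely $\L\iota^*\iota_*\cong \R\G_Z$ on $\cD(C)$, and then invoke Proposition~\ref{prop1230c} and commute the stable Koszul complex past $M\Lotimes_C(-)$. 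The shared core is exactly the same: the pre-triangulated hypothesis lets one form $\Kos(\underline{f}^{\vec m};X)\in C^Z$, so that $\R\G_Z h_X$ is a filtered colimit of $C^Z$-representables---you use this to place $\R\G_Z h_X$ in the essential image of $\L\iota^*$, while the paper uses it to establish semi-flatness. Your packaging is more conceptual (it isolates the identification $\L\iota^*\iota_*\cong\R\G_Z$ as a reusable statement and avoids the semi-flat lemma), while the paper's route stays closer to explicit models and makes the compatibility with the canonical map transparent. One small point you should make explicit: your isomorphism is the canonical map of the theorem because, under Proposition~\ref{prop1230c}, the map $\iota_*M\Lotimes_{C^Z}\iota_*N\to M\Lotimes_C N$ corresponds to the counit $\L\iota^*\iota_*N\to N$, and your identification $\L\iota^*\iota_*N\cong\R\G_Z N$ is built from that counit together with $\R\G_Z N\to N$.
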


  Let us explain the terminology in this theorem: For an object $X$ of $C$, we define its {\em support over $A$}, written $\supp_A(X)$,  to be the support of the graded $A$-algebra $H\End_C(X)$:
$$
\supp_A(X) := \{\fp \in \Spec(A) \mid H \End_C(X)_\fp \ne 0\} \subseteq \Spec(A).
$$
So, for a Zariski closed subset $Z$ of $\Spec(A)$, $C^Z$ is the full subcategory of $C$ consisting of those objects $X$ 
such that $\End_C(X)_\fp$ is exact whenever $\fp \notin Z$.
For any pair of objects $X$ and $Y$ of $C$, if at least one belongs to $C^Z$,
then the dg $A$-module $\Hom_C(X, Y)$ is also supported on $Z$.
This holds since the action of $A$ on $H^* \Hom_C(X,Y)$ factors through  both $H^*\End_C(X)$  and $H^*\End_C(Y)$. 
Since $C$ is $A$-linear, so is $C^Z$, and both $M \Lotimes_C N$ and $\iota_* M \Lotimes_{C^Z} \iota_* N$ 
have the structure of dg $A$-modules. The latter is supported on $Z$ and hence the canonical map factors through
$\R\G_Z(M \Lotimes_C N)$, the local cohomology of $M \Lotimes_C N$, which since $I$ is finitely generated may be defined using the \Cech complex;
 the theorem asserts this map is a quasi-isomorphism.
    
Let us illustrate the theorem with a concrete example:

\begin{ex} \label{ex1212b}
  Let $R$ be an ordinary commutative ring and let $\Perfdg(R)$ be the dg category of all bounded complexes of finitely generated projective $R$-modules.
  Recall from Example \ref{ex1212} that given $R$-modules $M$ and $N$, we have an isomorphism
  \begin{equation} \label{E1212c}
  \cM \Lotimes_{\Perfdg(R)} \cN  \cong M \Lotimes_R N
  \end{equation}
  in the derived category,   where we define $\cM = j^* M$ and $\cN = j^* N$,  with
  $j$ being  the inclusion of the dg category with one object $\{R\}$ into $\Perfdg(R)$. Explicitly, $\cM$ and $\cN$ are the dg $\Perfdg(R)$-modules given by $\cM(P) = M \otimes_R \Hom_R(P,R)$ and $\cN(P) = P
  \otimes_R N$ for $P \in
  \Perfdg(R)$.
  
  Now let $Z$ be a Zariski closed subset of $\Spec(R)$ associated to a finitely generated ideal,
  and recall $\Perfdg^Z(R)$ denotes the full dg subcategory consisting of complexes supported on $Z$
  and write $i: \Perfdg^Z(R) \into \Perfdg(R)$ for the inclusion functor.
Theorem \ref{thm711b} along with \eqref{E1212c} gives an isomorphism
  $$
    i_* \cM \Lotimes_{\Perfdg^Z(R)} i_*\cN  \cong \R\Gamma_Z \left(M \Lotimes_R N\right).
    $$
\end{ex}

\subsection{Koszul complexes and local cohomology} 
Before proving the Theorem, we develop the notion of local cohomology for dg categories.

Let $Z = V(I)$ where $I$ is a homogeneous ideal of $A$ generated by a finite list of homogeneous elements $\ug = (g_1, \dots, g_c)$ and let $e_i = \deg(g_i)$. 
    Recall that the Koszul complex on $\ug$ is the dg $A$-module 
$$
\Kos(\ug) := \bigotimes_{j=1}^c (\cone(A[-e_i] \xra{g_i} A)) = \Tot(0 \to  A[-e_1-\cdots -e_c] \to \cdots \to \bigoplus_i A[-e_i] \to A \to 0)
$$
with $A$ in degree $0$. 
    We will also be interested in the $A$-linear dual of $\Kos(\ug)$, written $\Kos^*(\ug)$, which is given explicitly as
    $$
    \Kos^*(\ug) := \bigotimes_{j=1}^c \cone(A \xra{-g_i} A[e_i]) = \Tot(0 \to A \to \bigoplus_i A[e_i] \to \cdots \to A[e_1 + \cdots + e_c] \to 0).
    $$
    with $A$  in degree $0$.   

    For any left (resp., right) dg $A$-module $L$, set $\Kos^*(\ug; L) := \Kos^*(\ug) \otimes_A L$ (resp. $L \otimes_A \Kos^*(\ug)$)); it is  a dg $A$-module supported on $V(I)$
    and so $\Kos^*(\ug; -)$ is a dg functor from dg $A$-modules to dg $A$-modules supported on $V(I)$.

    For a (right or left) dg $C$-module $M$, by $\Kos^*(\ug; M)$ we mean the dg $C$-module given by the composition of
    $$
C \xra{M}     \Moddg(A) \xra{\Kos^*(\ug; -)} \Moddg^Z(A) \subseteq \Moddg(A);
$$
that is, $\Kos^*(\ug; M)(X) := \Kos^*(\ug, M(X))$ for all objects $X$ of $C$. Thus $\Kos^*(\ug; -)$ is a functor from dg $C$-modules to dg $C$-modules supported on $Z$.

Since we assume $C$ is pre-triangulated, there exist cones of morphisms and, more generally, totalizations for bounded complexes in $Z^0 C$,
and thus we can make sense of $\Kos^*(\ug; X)$ as an object of $C$ for $X \in C$. That is, we define
$$
\Kos^*(\ug; X) := \Tot(0 \to X \to \bigoplus_i X[e_i] \to \cdots \to X[e_1+ \cdots +e_c] \to 0).
$$
with $X$ in degree $0$ and the maps being the same as in the defintion of $\Kos^*(\ug)$. 
Observe that if $N$ is any left dg $C$-module, we have a natural isomorphism
\begin{equation} \label{E0101}
  N(\Kos^*(\ug; X)) \cong \Kos^*(\ug, N(X)).
\end{equation}
Applying this when $N = h_Y$ for $Y \in C$
gives
$$
    \Hom_C(Y, \Kos^*(\ug; X)) = \Kos^*(\ug; \Hom_C(Y, X))
$$
    and thus we have an identification of right dg $C$-modules
    \begin{equation} \label{E323}
    h^{\Kos^*(\ug; X)} \cong \Kos^*(\ug; h^X).
  \end{equation}  
  In particular, \eqref{E323} gives that $\End_C(\Kos^*(\ug;X)) \cong \Kos^*(\ug; \End_C(X))$ and hence $\Kos^*(\ug; X) \in C^Z$. 
We will regard $\Kos^*(\ug; -)$ as a dg functor from $C$ to $C^Z$. By construction, the square
$$
\begin{tikzcd}
  C \arrow[rr,"\Kos^*(\ug; -)"]  \arrow[d,"\Yon_{\dg}"] &&   C^Z  \arrow[d,"\Yon_{\dg}"] \\
  \Moddg(C^\op) \arrow[rr, "\iota_* \circ \Kos^*(\ug; -)"] & & \Moddg((C^Z)^\op) \\
\end{tikzcd}
$$
commutes, where $\Yon_{\dg}$ is  dg Yoneda embedding; see \S \ref{subsec:Yoneda}.

%%%%%%%%%%%%5

Similarly, for any dg $C$-module $N$ we define the dg $C$-module $\Kos(\ug; N)$ supported on $Z$ by
the rule $X \mapsto \Kos(\ug; N(X))$, and given $X \in C$, we define $\Kos(\ug; X)$ as an object of $C$ by taking totalization, as above. The same reasoning gives
an isomorphism of dg $C$-modules
%For a right dg $C$-module $M$ we have
%\begin{equation} \label{E0101b}
%M(\Kos(\ug; X)) \cong \Kos^*(\ug; M(X)).
 % \end{equation}
% Applying this when $M = h^Y$ for $Y \in C$ gives
%   $$
%   \Hom_C(\Kos(\ug; X), Y) = \Kos^*(\ug; \HomC(X, Y))
%   $$
%   and hence and isomorphism 
     \begin{equation} \label{E323var2}
    h_{\Kos(\ug; X)} \cong \Kos^*(\ug; h_X).
  \end{equation}

%\subsection{Local cohomology}

Related to $\Kos^*$ we have the augmented \Cech complex associated to $\ug$. This is dg $A$-module given as the totalization of the complex of graded $A$-modules of the form
$$
\begin{aligned}
\CechC(\ug) & :=
\bigotimes_{j=1}^n \cone(A \xra{\can} A[1/g_j]) \\
& = \left(0 \to A \to \bigoplus_i A[1/g_i] \to \bigoplus_{i <j} A[1/g_i, 1/g_j] \to \cdots \to A[1/g_1, \dots, 1/g_c] \to 0
\right)
\end{aligned}
$$
with $A$ in degree $0$.  Let $\ug^l = (g_1^l, \dots, g_c^l)$. We have canonical map $\Kos^*(\ug^l) \to \Kos^*(\ug^{l+1})$ given as the tensor product of the maps from 
$\cone(A \xra{g^l_i} A[le_i])$ to $\cone(A \xra{g^{l+1}_i} A[(l+1)e_i])$, for $1 \leq i \leq c$,  given by the identity on $A$ and multiplication by $g_i$ on $A[le_i]$.  
Using these maps we realize the \Cech complex as a colimit of Koszul complexes: we have an isomorphism of dg $A$-modules
    $$
    \CechC(\ug) \cong \colim(\Kos^*(\ug) \to \Kos^*(\ug^2) \to \Kos^*(\ug^3) \to \cdots).
    $$

    For a graded left $A$-module $L$ we define
    $$
    \R\G_Z (L) := \CechC(\ug) \otimes_A L. 
    $$
    For each such $L$, $\R\G_Z(L)[1/g_i]$ is exact for all $i$ and hence $\R\G_Z(L)$  is supported on $Z$. Note that we have a natural  map
    $$
    \R\G_Z(L) \to L.
    $$
    If $L$ is a dg $A$-module its differential induces a differential on $\R\G_Z(L)$, and we may thus regard it as a dg $A$-module.
    We have a natural isomorphism
    $$
    H(\R\G_Z(L)) \cong \R\G_Z(H(L)),
    $$
    and,  in particular, the canonical map $\R\G_Z(L) \to L$ is a quasi-isomorphism if any only $L$ is supported on $Z$. 
    $\R\G_Z$ may be regarded, in fact, as  a dg functor from $\Moddg(A)$ to $\Moddg(A)$,  and by the comments just made its image is contained in $\Moddg^Z(A)$, the full dg subcategory
    of dg $A$-modules supported on $Z$.

    For a (right or left) dg $C$-module $M$, we define $\R\G_Z(M)$ to be the dg $C$-module given by the composition of dg functors
    $$
    \Moddg(C) \xra{M} \Moddg(A) \xra{\R\G_Z} \Moddg(A).
    $$
    So, for each object $X$ of $C$ we have $\R\G_Z(M)(X) = \R\G_Z(M(X))$.
    Observe that there is a canonical map
    $$
    \R\G_Z(M) \to M,
    $$
    and it is a quasi-isomorphism of dg $C$-modules if and only if $M$ is supported on $Z$ in the sense that
    $M(X)$ is supported on $Z$ for all objects $X$ of $C$.

   \begin{lem} \label{lem323e}
     For any dg $C$-module $M$, there is a natural isomorphism
     $$
     \R\G_Z(M) \cong \colim(\Kos^*(\ug; M) \to \Kos^*(\ug^2; M) \to \cdots)
     $$
     of dg $C$-modules.
   \end{lem}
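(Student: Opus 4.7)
The plan is to reduce the statement to the pointwise assertion, i.e., to the corresponding statement for dg $A$-modules in place of dg $C$-modules, and then exploit the fact that the Čech complex $\CechC(\ug)$ is already realized as a filtered colimit of Koszul complexes.

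First, I would unwind the definitions: both $\R\G_Z(M)$ and each $\Kos^*(\ug^l; M)$ are defined objectwise in $X \in C$, with
$$
\R\G_Z(M)(X) = \CechC(\ug) \otimes_A M(X), \qquad \Kos^*(\ug^l; M)(X) = \Kos^*(\ug^l) \otimes_A M(X).
$$
Since colimits of dg $C$-modules are computed objectwise (as $\Mod(C)$ is a Grothendieck abelian category of enriched functors into $\Moddg(k)$, with colimits computed in $\Moddg(k)$ at each object), it suffices to produce, for every $X$, a natural isomorphism of dg $A$-modules
$$
\CechC(\ug) \otimes_A M(X) \xra{\cong} \colim_l \bigl(\Kos^*(\ug^l) \otimes_A M(X)\bigr)
$$
that is compatible with the structure maps $\Hom_C(X,Y) \otimes_k M(X) \to M(Y)$.

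Second, I would invoke the isomorphism $\CechC(\ug) \cong \colim_l \Kos^*(\ug^l)$ established just before the lemma, together with the fact that the tensor product functor $- \otimes_A M(X)$ preserves filtered colimits (being a left adjoint, or directly because $\otimes_A$ commutes with all colimits and the transition maps of the system are compatible). This yields a canonical isomorphism of dg $A$-modules at each $X$.

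Finally, I would check naturality in $X$: given a morphism $g \in \Hom_C(X,Y)$, the induced map $M(X) \to M(Y)$ fits into a commutative diagram with the Koszul (resp.\ Čech) multiplication maps, since $\Kos^*(\ug^l; M)$ and $\R\G_Z(M)$ were defined as compositions of dg functors $M$ and $\Kos^*(\ug^l;-)$, $\R\G_Z$ on $\Moddg(A)$. The naturality of $\CechC(\ug) \cong \colim_l \Kos^*(\ug^l)$ in dg $A$-module arguments then promotes the objectwise isomorphism to an isomorphism of dg $C$-modules. There is no genuine obstacle here; the lemma is essentially a formal consequence of the commutation of tensor product with filtered colimits, lifted from dg $A$-modules to dg $C$-modules via the enriched-functor description.
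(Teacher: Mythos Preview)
Your proposal is correct and follows essentially the same approach as the paper. The paper's proof is a one-liner pointing to the underlying fact that $\colim(L \xra{g} L \xra{g} \cdots) \cong L[1/g]$ for any graded $A$-module $L$; your argument spells out the same mechanism more explicitly by invoking the already-stated isomorphism $\CechC(\ug) \cong \colim_l \Kos^*(\ug^l)$, commuting tensor product with filtered colimits, and checking naturality objectwise.
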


   \begin{proof} This holds since there is a natural isomorphism $\colim(L \xra{g} L \xra{g} \cdots) \cong L[1/g]$ for any graded $A$-module $L$.
   \end{proof}

   \begin{rem}
     Since we do not assume $Z^0 C$ is closed under colimits, we cannot in general make sense of $\R\G_Z(X)$ as an object of $C$ for $X \in C$.
     This would be possible for the case $C = \Moddg(\cA)$ of primary interest, since we can take colimits in $\Mod(\cA)$.
But such a construction is not needed to prove Theorem \ref{thm711b}.
   \end{rem}

      \subsection{Proof of Theorem \ref{thm711b}}

We need a couple more lemmas:

  \begin{lem} \label{lem323a} For any dg $C$-module $M$, the canonical map $\iota_* \R\G_Z(M) \to \iota_* M$ is a quasi-isomorphism of $C^Z$-modules.
    \end{lem}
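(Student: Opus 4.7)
The plan is to reduce the statement to the characterization of $\R\G_Z$ already recorded in the excerpt: for any dg $A$-module $L$, the canonical map $\R\G_Z(L) \to L$ is a quasi-isomorphism if and only if $L$ is supported on $Z$. Since by definition $\iota_*\R\G_Z(M)(X) = \R\G_Z(M(X))$ and $\iota_*M(X) = M(X)$ for $X \in C^Z$, it suffices to show that $M(X)$ is supported on $Z$ as a dg $A$-module for every $X \in C^Z$.

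To establish this, I would invoke the same principle the author uses when first defining $C^Z$. The $A$-module structure on $M(X)$ comes from $A$-linearity of $C$: the unit $A \to \End_C(X)$ makes $M(X)$ into a dg module over $\End_C(X)$, and the $A$-action factors through this map. Consequently, the action of $A$ on $H(M(X))$ factors through $H\End_C(X)$. Because $X \in C^Z$, the graded $A$-algebra $H\End_C(X)$ is supported on $Z$, so for every $\fp \notin Z$ the unit $1 \in H\End_C(X)_\fp$ vanishes. Therefore $H(M(X))_\fp = 0$ for $\fp \notin Z$; that is, $M(X)$ is supported on $Z$.

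Applying the observation $\R\G_Z(L) \xra{\sim} L$ for $L$ supported on $Z$ objectwise to $L = M(X)$ then gives the desired quasi-isomorphism $\R\G_Z(M(X)) \xra{\sim} M(X)$ for every $X \in C^Z$, which is precisely the assertion that $\iota_*\R\G_Z(M) \to \iota_*M$ is a quasi-isomorphism of dg $C^Z$-modules.

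I do not expect a serious obstacle here; the content is entirely in the point that $A$-linearity of $C$ forces $M(X)$ to inherit the support condition from $X$. The only thing to be slightly careful about is recording that this $A$-action is indeed the one used to define $\R\G_Z$ on $M(X)$, which is immediate from the construction of $\R\G_Z$ as the dg functor obtained by tensoring over $A$ with $\CechC(\ug)$ after applying $M$.
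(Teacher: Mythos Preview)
Your proposal is correct and follows essentially the same approach as the paper: reduce to showing $M(X)$ is supported on $Z$ for each $X \in C^Z$, and deduce this from the fact that $M(X)$ is a dg $\End_C(X)$-module with the $A$-action factoring through $\End_C(X)$, which is supported on $Z$. The paper's proof is just a terser version of what you wrote.
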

   
    \begin{proof} We need to show that for each $X \in C^Z$ the canonical map $\R\G_Z(M)(X) = \R\G_Z M(X) \to M(X)$ is a quasi-isomorphism, which
      is equivalent to the assertion that the dg $A$-module $M(X)$ is supported on $Z$. This holds since $M(X)$ is a dg $\End_C(X)$-module
      and as an dg $A$-module, $\End_C(X)$ is supported on $Z$.
    \end{proof}

\begin{lem} \label{lem1214}
  If $F$ is a semi-free dg $C$-module, then $\iota_* \Gamma_Z(F)$ is a semi-flat dg $C^Z$-module.
   \end{lem}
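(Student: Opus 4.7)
The plan is to reduce to the case of a representable dg $C$-module via the semi-free filtration, and then to use Lemma~\ref{lem323e} together with \eqref{E323} and \eqref{E323var2} to express $\R\G_Z$ of a representable as a filtered colimit of representables that actually live in $C^Z$. I will describe the left-module case explicitly; the right-module case is identical with the roles of $\Kos$ and $\Kos^*$ in the $C$-variable exchanged.

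First, for any $X \in C$, the plan is to combine Lemma~\ref{lem323e} with \eqref{E323var2} to obtain
\[
\R\G_Z(h_X) \;\cong\; \colim_l \Kos^*(\ug^l; h_X) \;\cong\; \colim_l h_{\Kos(\ug^l; X)}.
\]
I would then check that $\Kos(\ug^l; X) \in C^Z$ by observing that $\End_C(\Kos(\ug^l; X))$ is a dg module over the dg $A$-algebra $\Kos^*(\ug^l; A)$, and the latter becomes acyclic after inverting any $g_i$ (since multiplication by $g_i^l$ becomes a unit that is null-homotopic). Consequently each $\iota_* h_{\Kos(\ug^l; X)}$ is a representable dg $C^Z$-module, hence semi-free, hence semi-flat; and by closure of semi-flat modules under filtered colimits (Proposition~\ref{prop1111}), $\iota_* \R\G_Z(h_X)$ is semi-flat.

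For an arbitrary semi-free $F$ with semi-free filtration $0 = F_{-1} \subseteq F_0 \subseteq F_1 \subseteq \cdots$, the plan is to observe that $\CechC(\ug)$ is a bounded complex of flat graded $A$-modules (its terms are localizations of $A$), so $\R\G_Z = \CechC(\ug) \otimes_A -$ preserves short exact sequences of dg $C$-modules, and $\iota_*$ is plainly exact. This yields short exact sequences
\[
0 \to \iota_* \R\G_Z(F_{i-1}) \to \iota_* \R\G_Z(F_i) \to \iota_* \R\G_Z(F_i/F_{i-1}) \to 0
\]
in $\Mod(C^Z)$. Since $F_i/F_{i-1}$ is a direct sum of suspensions of representable $C$-modules, the representable case together with closure of semi-flat modules under suspensions and arbitrary direct sums (Proposition~\ref{prop1111}) gives that the right-hand term is semi-flat. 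Induction on $i$ using the extension-closure of semi-flat modules yields semi-flatness of each $\iota_* \R\G_Z(F_i)$, and the filtered-colimit closure then upgrades this to $\iota_* \R\G_Z(F) = \colim_i \iota_* \R\G_Z(F_i)$.

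The only genuinely nontrivial ingredient beyond this standard filtered-colimit and exact-sequence bookkeeping is the membership $\Kos(\ug^l; X) \in C^Z$, and that reduces to the well-known fact that the Koszul complex on $\ug^l$ is acyclic after inverting any generator $g_i$. I do not anticipate any serious obstacle; the argument is essentially a reduction-to-representables, using that the representable case is forced into $C^Z$ precisely because the Koszul construction lands there.
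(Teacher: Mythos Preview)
Your proposal is correct and follows essentially the same approach as the paper: reduce to representables via the semi-free filtration, express $\iota_*\R\G_Z(h_X)$ as a filtered colimit of representables in $C^Z$ using Lemma~\ref{lem323e} and \eqref{E323var2}, then climb back up using closure of semi-flat modules under extensions and filtered colimits (Proposition~\ref{prop1111}). One small imprecision: your claim that $\End_C(\Kos(\ug^l;X))$ is a dg module over $\Kos^*(\ug^l;A)$ is not quite the right formulation, but the conclusion $\Kos(\ug^l;X)\in C^Z$ is correct and was already established in the paper just before the lemma (via $\End_C(\Kos(\ug^l;X)) \cong \Kos^*(\ug^l;\End_C(X))$, which is supported on $Z$).
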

    
   \begin{proof}
     We will use throughout that all semi-free modules are semi-flat and that  the collection of semi-flat modules is closed under extensions and filtered colimits;
     see Proposition \ref{prop1111}.
     
Given any object $X$ of $C$ and integer $l \geq 0$,      by \eqref{E323var2}  the dg $C$-module  $\Kos^*(\ug^l, h_X)$ is  represented by the object $\Kos^*(\ug^l, X)$, which belongs
     to $C^Z$.  In particular, the dg $C^Z$-module $\iota_* \Kos^*(\ug^l, h_X)$ is representable and hence semi-flat.
     By Lemma \ref{lem323e} we have $\iota_* \R\G_Z h_X = \colim_l \iota_* \Kos^*(\ug^l, h_X)$,
and hence $\iota_* \R\G_Z(h_X)$ is semi-flat for any $X$.
It follows that $\iota_* \Gamma_Z(F)$ is a semi-flat dg $C^Z$-module whenever $F$ is free.

     Now suppose $F$ is semi-free, so that there is a chain
     $0 = F_{-1} \subseteq F_0 \subseteq \cdots \subseteq F$ with $F = \colim_i F_i$ and $F_i/F_{i-1}$ free.
     Since $\iota_* \Gamma_Z$ is exact, we obtain a chain
     $$
     0 = \iota_* \Gamma_Z F_{-1} \subseteq \iota_* \Gamma_Z F_0 \subseteq \cdots \subseteq \iota_* \Gamma_Z F
     $$
     of dg $C^Z$-modules such that for each $i$, we have $\iota_* \Gamma_Z F_i/\iota_* \Gamma_Z F_{i-1} \cong \iota_* \R\G_Z(F_i/F_{i-1})$, which is semi-flat by what
     was proven above. Since semi-flats are closed under extension, $\iota_* \Gamma_Z F_i$ is semi-flat for each $i$. Finally, we have
     $\iota_* \R\G_Z(F) = \colim_i \iota_* \R\G_Z(F_i)$ and hence is semi-flat. 
     \end{proof}

\begin{proof}[Proof of Theorem \ref{thm711b}]
By Lemma \ref{lem323a} it suffices to prove the canonical map
\begin{equation} \label{E1111a}
\iota_*\G_Z(M) \Lotimes_{C^Z} \iota_* \G_Z(N) \to \G_Z(M \Lotimes_C N)
\end{equation}
in $\cD(A)$ is an isomorphism.
Since $- \Lotimes -$ and $\iota_* \G_Z$ preserve quasi-isomorphisms, and every module is quasi-isomorphic to a semi-free one (Proposition \ref{newprop47}),
we may assume $M$ is semi-free, in which case  $M \Lotimes_C N \cong M \otimes_C N$.
  By Lemma \ref{lem1214} $\iota_* \G_Z(M)$ is semi-flat and hence $\iota_*\G_Z(M) \Lotimes_{C^Z} \iota_* \G_Z(N)  \cong
  \iota_*\G_Z(M) \otimes_{C^Z} \iota_* \G_Z(N)$, by Corollary \ref{hflatcor}.  We may also assume $N$ is semi-free.

  To summarize, it suffices to prove   the canonical map
  \begin{equation} \label{E1214}
  \iota_* \G_Z M \otimes_{C^Z} \iota_* \G_Z N  \to \R\G_Z(M \otimes_C N)
  \end{equation}
  is a quasi-isomorphism whenever $M$ and $N$ are semi-free. The source and target of this natural map, when interpreted as functors in the variable $N$, are exact and preserve all
  colimits. Since $N$ is semi-free, it suffices to prove it is an isomorphism when $N$ is representable: $N = h_X = \Hom_C(X, -)$ for some
  object $X$ of $C$.

By Lemma \ref{lem323e} we have 
$$
\iota_* \G_Z(h_X) \cong \colim( \iota_* \Kos^*(\ug, h_X) \to \iota_* \Kos^*(\ug^2, h_X) \to \cdots)
$$
and thus \eqref{E323var2} gives
$$
\iota_* \G_Z(h_X) \cong \colim( h_{\Kos(\ug, X)} \to h_{\Kos(\ug^2, X)} \to\cdots).
$$
For each $l$ we have natural isomorphisms
$$
\iota_*\G_Z(M) \otimes_{C^Z}  h_{\Kos(\ug^l;X)} \cong \G_Z(M(\Kos(\ug^l; X)) \cong
M(\Kos(\ug^l; X)) \cong \Kos^*(\ug^l; M(X))
$$
with the last isomorphism given by  \eqref{E0101}. This gives
$$
\iota_*\G_Z(M) \otimes_{C^Z} \iota_* \G_Z(h_X) \cong \colim_l(\Kos^*(\ug^l; M(X))) = \G_Z(M(X)).
$$
Since $M \otimes_C h_X \cong M(X)$ (see \eqref{E1029a}), we conclude \eqref{E1214} is a quasi-isomorphism.
\end{proof}

\section{Supports for curved modules and a Theorem of Briggs} \label{SecSupp}

The goal of this section is to prove:

\begin{thm}  \label{thm47}
  Assume $k$ is regular and excellent and $\cA = (A, w)$ is an essentially smooth curved $k$-algebra, and set $W = \Nonreg(\cA) \times_{\Spec(k)} \Nonreg(\cA)$.
  The dg functor $\psi$ defined in \eqref{E1212}  factors as
$$
\Perfdg(\cA)^e \xira{\phi} \Perfdg^W(\cA^e) \subseteq \Perfdg(\cA^e) 
$$
and $\phi$ is a Morta equivalence. 
For any right dg $\Perfdg^W(\cA^e)$-module $M$ and  left dg $\Perfdg^W(\cA^e)$-module  $N$, the canonical map
$$
\phi_* M \Lotimes_{\Perfdg(\cA^e)} \phi_* N \xra{\cong} M \Lotimes_{\Perfdg^W(\cA^e)} N
$$
is an isomorphism in $\cD(A)$. 
\end{thm}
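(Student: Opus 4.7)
The plan organizes the proof into three steps corresponding to the three assertions of the theorem.

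For the factorization through $\Perfdg^W(\cA^e)$, the essential input is that every $X \in \Perfdg(\cA)$ satisfies $\supp_A(X) \subseteq \Nonreg(\cA)$. I would argue this by localizing at $\fp \in \Spec(A) \setminus \Nonreg(\cA)$: by definition $w \notin \fp^2 A_\fp$, so either $w$ is a unit in $A_\fp$ or $w$ extends to a regular system of parameters. In the first case, $X_\fp$ is a matrix factorization of a unit over a regular local ring, hence contractible in the homotopy category. In the second case, $A_\fp/w$ is regular, so its singularity category vanishes, and the standard equivalence between perfect curved modules and the singularity category of $A/w$ forces $X_\fp$ to be trivial in the homotopy category. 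Either way, $H\End_C(X)_\fp = 0$, giving $\supp_A(X) \subseteq \Nonreg(\cA)$. Consequently, $\psi(X,Y) = X \otimes_k Y^*$ has $A^e$-support contained in $\supp_A(X) \times_{\Spec(k)} \supp_A(Y) \subseteq W$, so $\psi$ factors through $\Perfdg^W(\cA^e)$, producing $\phi$. Since $\psi$ is fully-faithful by its construction on hom complexes and $\iota$ is the inclusion of a full dg subcategory, $\phi$ is fully-faithful, hence in particular quasi-fully-faithful.

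To show $\phi$ is a Morita equivalence, I would apply Proposition \ref{thickprop}. Quasi-fully-faithfulness has just been noted, so it remains to verify the thick-generation hypothesis, namely
$$
\Thick_{[\Perfdg^W(\cA^e)]}\bigl(\{\phi(X,Y) \mid X, Y \in \Perfdg(\cA)\}\bigr) = [\Perfdg^W(\cA^e)].
$$
This is precisely Briggs' Theorem, the curved-ring analogue of the Hopkins-Neeman classification of thick subcategories supported on a closed set, whose proof is presented in the appendix. This is the single hard step of the whole argument; everything else is either elementary support bookkeeping or formal manipulation using the framework developed in \S\ref{sec:homalg}.

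Granted that $\phi$ is a Morita equivalence, the second assertion follows at once from Proposition \ref{Moritaprop} applied to $\phi$, $M$, and $N$: the canonical map
$$
\phi_* M \Lotimes_{\Perfdg(\cA)^e} \phi_* N \xra{\cong} M \Lotimes_{\Perfdg^W(\cA^e)} N
$$
is an isomorphism in $\cD(A)$, which is exactly the stated identity. In summary, the proof has two elementary pieces (support containment and fully-faithfulness), one formal consequence (Proposition \ref{Moritaprop}), and one deep input (Briggs' Theorem), with the last being the essential obstacle.
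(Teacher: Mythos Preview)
Your overall architecture matches the paper's: prove the factorization via support containment, invoke Proposition~\ref{thickprop} for the Morita equivalence, and finish with Proposition~\ref{Moritaprop}. The first and third steps are fine. The gap is in the second step.

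You write that the thick-generation hypothesis ``is precisely Briggs' Theorem.'' It is not. Briggs' Theorem (Theorem~\ref{body_thm_curved_regular}) classifies thick subcategories of $[\Perfdg(\cA^e)]$ by their support; to conclude that the image of $\phi$ thickly generates $[\Perfdg^W(\cA^e)]$ you must know that some object in that image has support equal to \emph{all} of $W$. Two ingredients are missing from your argument. First, you only claimed the containment $\supp_{\cA^e}(\psi(X,Y)) \subseteq \supp_A(X) \times_{\Spec(k)} \supp_A(Y)$, whereas the \emph{equality} is what is needed here; the paper proves this as Lemma~\ref{lem818} via a K\"unneth argument over graded residue fields, and it is not a triviality. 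Second, you need an $X \in \Perfdg(\cA)$ with $\supp_A(X) = \Nonreg(\cA)$, so that $\supp_{\cA^e}(\psi(X,X)) = W$; this is the existence half of Briggs' Theorem (see Corollary~\ref{corA12}), and it uses the excellence hypothesis on $k$ to guarantee that $\Nonreg(\cA)$ is Zariski closed. Only after these two points are in hand does a second application of Briggs' Theorem, now to $\cA^e$, yield $\Thick_{[\Perfdg(\cA^e)]}(\psi(X,X)) = [\Perfdg^W(\cA^e)]$.

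Your direct localization argument for $\supp_A(X) \subseteq \Nonreg(\cA)$ is a reasonable alternative to citing Briggs' Theorem for that containment (it is essentially Lemma~\ref{lem_reg_zero}), but that containment alone cannot carry the Morita step.
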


In particular, this theorem gives that the map \eqref{Isomorphism4} is an isomorphism in $\cD(A)$. 
The central ingredient in its proof is the following theorem of Briggs, which generalizes a result of Hopkins and Neeman to the curved setting.
Let us say that a Zariski closed subset $Z$ of $\Spec(A)$ is {\em homogeneous} if $Z = V(I) = \{\fp \in \Spec(A) \mid \fp \supseteq I\}$ for some homogeneous ideal $I$ of $A$.

\begin{thm}[Briggs' Theorem] \label{body_thm_curved_regular}
  Let $\cA = (A, w)$ be a regular curved ring.
There is a bijection
$$
\begin{tikzcd}
  \Big\{\text{thick subcategories of }[\Perfdg(\cA)]\Big\} \ar[r,  shift left=1.5ex, "\sigma"] &
  \Big\{\text{specialization closed, homogeneous subsets of }\Nonreg(\cA)\Big\}, \ar[l,  shift left=1.5ex, "\theta"]
\end{tikzcd}
  $$
  where $\theta(Z)=\{X\in [\Perfdg(\cA)]  \mid \supp_A(X)\subseteq Z\}$ and $\sigma(T)=\bigcup_{X \in T} \supp_A(X)$. 

  In particular, for every Zariski closed, homogeneous subset $Z$  of $\Spec(A)$ there exists an $X \in \Perfdg(\cA)$ such that $Z= \supp_A(X)$,
  and for every object $X \in \Perfdg(\cA)$, its thick closure in $[\Perfdg(\cA)]$ coincides with $[\Perfdg^{\supp_{\cA}(X)}(\cA)]$. 
\end{thm}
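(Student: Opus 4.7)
The plan is to adapt the classical Hopkins-Neeman-Thomason classification of thick subcategories of perfect complexes over a Noetherian ring to the curved setting. The argument naturally divides into three parts: verifying that $\theta$ and $\sigma$ land in the claimed categories, a realization step producing test objects with prescribed support, and a generation lemma for objects that share a common support.

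First I would show $\supp_A(X) \subseteq \Nonreg(\cA)$ for every $X \in \Perfdg(\cA)$, which shows $\sigma(T) \subseteq \Nonreg(\cA)$. For $\fp \notin \Nonreg(\cA)$ the curvature $w$ lies outside $\fp^2 A_\fp$, so in the regular local ring $A_\fp$ either $w$ is a unit or $w$ is part of a regular system of parameters. In the unit case, $h = w^{-1}\del_X/2$ (assuming $2$ invertible; a variant works in general) has degree $-1$ and satisfies $\del_X h + h \del_X = \id_X$, inducing an explicit contracting homotopy on $\End_\cA(X)_\fp$. In the second case, the hypersurface $A_\fp/w$ is itself regular, so its singularity category --- which is equivalent to the homotopy category of $\Perfdg(A_\fp, w)$ --- vanishes, again forcing $\End_\cA(X)_\fp$ to be acyclic. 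Specialization closure of $\sigma(T)$ follows from the standard fact for finitely presented objects, homogeneity follows from the graded structure of $\End_\cA(X)$, and thickness of $\theta(Z)$ is immediate from the definition.

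The heart of the proof is the realization step: for every Zariski closed homogeneous subset $Z \subseteq \Nonreg(\cA)$, construct $K_Z \in \Perfdg(\cA)$ with $\supp_A(K_Z) = Z$. The natural tool is a curved Koszul matrix factorization. Writing $Z = V(g_1, \dots, g_c)$ with $g_i$ homogeneous, the hypothesis $Z \subseteq \Nonreg(\cA)$ implies that $w$ lies in $(g_1, \dots, g_c)^2$ locally on $Z$, so Zariski-locally on a neighborhood of $Z$ one can write $w = \sum_i g_i h_i$ for suitable homogeneous $h_i$. Using this presentation, form the free $\Z$-graded $A$-module with basis indexed by subsets of $\{1, \dots, c\}$ and equip it with the odd endomorphism $\del = \sum_i (g_i \iota_i + h_i \epsilon_i)$ built from exterior multiplication and contraction; a direct computation gives $\del^2 = \sum_i g_i h_i \cdot \id = w \cdot \id$. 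Patching such local constructions via a Zariski cover produces the global $K_Z$. I expect this patching step, together with verifying that the support of $K_Z$ is exactly $V(g_1, \dots, g_c)$ --- that the curvature is nontrivial enough to detect $V(g_1, \dots, g_c)$ while becoming trivializable off it --- to be the main technical obstacle.

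The final step is the generation lemma: if $X, Y \in \Perfdg(\cA)$ satisfy $\supp_A(X) \subseteq \supp_A(Y)$, then $X$ belongs to the thick closure of $Y$ in $[\Perfdg(\cA)]$. Following the Hopkins-Neeman-Thomason blueprint, this is proved by Noetherian induction on the dimension of $\supp_A(X)$, using the test objects $K_Z$ from the realization step as detectors that let one extract irreducible components of the support by iterated cones, and reducing the remaining local statement at a point $\fp$ to the classical assertion inside the residue complex. Combined with the realization step this yields $\Thick_{[\Perfdg(\cA)]}(X) = \theta(\supp_A(X)) = [\Perfdg^{\supp_A(X)}(\cA)]$, which proves the ``in particular'' clause. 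The bijection then follows formally: $\sigma \circ \theta = \id$ because every closed homogeneous $Z \subseteq \Nonreg(\cA)$ equals $\supp_A(K_Z) \subseteq \sigma(\theta(Z))$, while specialization closure of $Z$ gives the reverse inclusion; and $\theta \circ \sigma = \id$ because every thick subcategory is the union of the thick closures of its objects, each of which equals $\theta(\supp_A(X))$ by the generation lemma.
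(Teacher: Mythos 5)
Your proposal correctly identifies the three main structural components (support containment, realization, generation), and your curved Koszul construction for the realization step is essentially the paper's: for $w = \sum_i x_i y_i$ with $x_i, y_i \in \fp$, perturb the Koszul differential on $\bigwedge A^n$ by multiplication by $\sum_i e_i y_i$ so that the square is $w$. However, there are two significant places where your plan diverges from the paper, and in one of them you have flagged a genuine obstacle that you would need to resolve but the paper sidesteps.

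For the realization step, you propose writing $w = \sum_i g_i h_i$ only Zariski-locally near $Z$ and then gluing curved modules over an open cover. Gluing perfect curved modules is not straightforward (there is no obvious descent theory at this level of generality), and you correctly anticipate that this is ``the main technical obstacle'' — but the paper avoids it entirely. The paper first reduces to the case that $A$ is a graded \emph{domain} (by splitting a regular ring into a finite product of domains, with the curvature decomposing componentwise), and then observes that for a homogeneous prime $\fp \in \Nonreg^*(\cA)$ in a domain, the local condition $w \in \fp^2 A_{(\fp)}$ already implies the global condition $w \in \fp^2$. This yields a single curved Koszul complex supported on $V(\fp)$ with no gluing, and since every specialization-closed set is a union of such $V(\fp)$'s, the surjectivity of $\sigma$ follows. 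You should adopt this reduction; the patching you propose would be considerably harder to make precise.

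For the generation lemma, your plan is a direct adaptation of the Hopkins--Neeman--Thomason Noetherian induction to the curved setting. The paper takes a genuinely different and more economical route: rather than re-running the induction for curved modules, it \emph{reduces} the curved problem to the established dg one. Concretely, for $X, Y$ perfect curved modules, the dg $A$-modules $\End_A(X)$, $\End_A(Y)$, $\Hom_A(Y,X)$ are perfect (this uses regularity of $A$), and $\supp^*_A(X) \subseteq \supp^*_A(Y)$ is by definition the inclusion $\supp^*_A(\End_A(X)) \subseteq \supp^*_A(\End_A(Y))$. The dg Hopkins--Neeman theorem (Carlson--Iyengar) then gives $\End_A(Y) \models_{\cD(A)} \End_A(X)$ in the ordinary derived category of $A$. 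Applying the exact functor $- \otimes_A X$ and $- \otimes_A Y$ from perfect dg $A$-modules to $[\Perfdg(\cA)]$, together with the isomorphism $\Hom_A(Y,X) \otimes_A Y \cong \End_A(Y) \otimes_A X$ and the fact that $X$ is a summand of $\End_A(X) \otimes_A X$, one chains the containments to get $Y \models_{[\Perfdg(\cA)]} X$. This avoids having to develop curved analogues of the localization machinery underlying your proposed induction (curved tensor idempotents, support detection by quotients, etc.), all of which would need to be built from scratch. Your route might ultimately succeed, but it is substantially more work; the reduction argument is the key idea you are missing.
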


See  Appendix \ref{appendix} for the proof of Briggs' Theorem and other results.  (The version of this theorem found in the Appendix,
namely Theorem \ref{thm_curved_regular}, is written in terms of
specialization closed subsets  of the set $\Spec^*(A)$ of homogeneous prime ideals of $A$.
Since for any $X \in \Perfdg(\cA)$ the set $\supp_A(X)$ coincides with the Zariski closure of $\supp_A(Z) \cap
\Spec^*(A)$, the statement given here is equivalent.)

We will also need the following technical result:

\begin{lem} \label{lem818} With the notation as in Theorem \ref{thm47}, for any $X, Y \in \Perfdg(\cA)$, we have
  $$
  \supp_{\cA^e}(\psi(X,Y)) = \supp_A(X) \times_{\Spec(k)} \supp_A(Y).
  $$
\end{lem}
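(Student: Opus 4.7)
The plan is to reduce the support computation to a K\"unneth-type statement about tensor products over $k$. The crucial first observation is that the fully-faithful embedding $\psi$ gives, essentially by definition, an isomorphism of dg $A^e$-modules
\[
\End_{\Perfdg(\cA^e)}(\psi(X,Y)) \;\cong\; \End_{\Perfdg(\cA)}(X) \otimes_k \End_{\Perfdg(\cA)}(Y),
\]
where the two copies of $A$ in $A^e$ act on the two tensor factors. Since $X^\nat$ and $Y^\nat$ are finitely generated projective as graded $A$-modules, each factor on the right is an honest (uncurved) dg $A$-module: the squares $\partial^2 = w$ cancel in the commutator differential on $\End$ (because $w$ is central), and the underlying graded $A$-module is finitely generated projective.

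Second, because $A$ is essentially smooth (in particular flat) over $k$, the underlying graded $A$-modules of both $\End_{\Perfdg(\cA)}(X)$ and $\End_{\Perfdg(\cA)}(Y)$ are flat over $k$. A standard K\"unneth argument then computes the support of the tensor product: for a homogeneous prime $\mathfrak{P} \in \Spec(A^e)$ with images $\mathfrak{p}_1,\mathfrak{p}_2 \in \Spec(A)$, the localization $H\bigl(\End_{\Perfdg(\cA)}(X) \otimes_k \End_{\Perfdg(\cA)}(Y)\bigr)_\mathfrak{P}$ is nonzero if and only if both $H\End_{\Perfdg(\cA)}(X)_{\mathfrak{p}_1}$ and $H\End_{\Perfdg(\cA)}(Y)_{\mathfrak{p}_2}$ are nonzero. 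One can verify this after further localizing to the residue field of $\mathfrak{q} := \mathfrak{P} \cap k$, where tensor product over a field is exact and the claim becomes immediate from the equality $(M \otimes_k N)_{\mathfrak{p}_1 \otimes \mathfrak{p}_2} \otimes_k k(\mathfrak{q}) \cong M_{\mathfrak{p}_1}(\mathfrak{q}) \otimes_{k(\mathfrak{q})} N_{\mathfrak{p}_2}(\mathfrak{q})$. Under the identification $\Spec(A^e) = \Spec(A) \times_{\Spec(k)} \Spec(A)$, this is exactly the defining condition for $\mathfrak{P} \in \supp_A(X) \times_{\Spec(k)} \supp_A(Y)$.

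The main obstacle is the K\"unneth step: one must ensure that higher $\Tor^k$ terms do not introduce spurious support. This relies on flatness over $k$ of the underlying graded $A$-modules of $\End_{\Perfdg(\cA)}(X)$ and $\End_{\Perfdg(\cA)}(Y)$, which is automatic since these are finitely generated projective over the $k$-flat algebra $A$. Once this flatness is in hand, the support of the tensor product commutes with homology on a prime-by-prime basis, and the lemma follows.
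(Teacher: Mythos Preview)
Your overall strategy matches the paper's: identify $\End_{\cA^e}(\psi(X,Y)) \cong \End_\cA(X) \otimes_k \End_\cA(Y)^{\op}$, reduce to the case where $k$ is a graded field by passing to the residue field of $\mathfrak{q} = \mathfrak{P}\cap k$, and then apply K\"unneth. However, your justification for the reduction step is inadequate. You invoke flatness of the underlying graded modules over $k$ and say this ensures ``higher $\Tor^k$ terms do not introduce spurious support'' and that ``the support of the tensor product commutes with homology on a prime-by-prime basis''. But flatness of $M^\nat$ over $k$ only gives $M\otimes_k N \simeq M\otimes_k^{\mathbb L} N$; it does \emph{not} by itself show that $H(M\otimes_k N)_{\mathfrak P}\neq 0$ is equivalent to $H\bigl((M\otimes_k N)\otimes_k k(\mathfrak q)\bigr)_{\mathfrak P}\neq 0$, nor the analogous statements for $M$ at $\mathfrak p_1$ and $N$ at $\mathfrak p_2$. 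Those equivalences are exactly what you need, and you have not proved them.

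The paper closes this gap with an explicit claim: for a regular graded ring $R$ and a dg $R$-module $L$ with $L^\nat$ finitely generated projective, a homogeneous prime $\fp$ lies in $\supp_R(L)$ if and only if $H(L\otimes_R\kappa^*(\fp))\neq 0$. The proof localizes at $\fp$, writes $\fp R_\fp$ as a regular sequence $x_1,\dots,x_d$ (this is where regularity is used), and inducts on $i$ via the short exact sequences $0\to L_i\xra{x_{i+1}}L_i\to L_{i+1}\to 0$: the long exact sequence plus Nakayama's lemma give $H(L_i)\neq 0 \iff H(L_{i+1})\neq 0$. Since $A$ and $A^e$ are regular (being essentially smooth over the regular ring $k$), this claim applies to $M$, $N$ over $A$ and to $M\otimes_k N$ over $A^e$, yielding the reduction to a graded field. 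Your flatness observation is relevant only insofar as it guarantees that the $x_i$ remain non-zero-divisors on the relevant modules; the decisive ingredient is the Nakayama argument, which you have omitted.
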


\begin{proof}
Given $X, Y \in \Perfdg(\cA)$, we have
$$
\End_{\cA^e}(\psi(X,Y)) \cong \End_\cA(X) \otimes_k \End_{\cA^{\op}}(Y) = \End_{\cA}(X) \otimes_k \End_{\cA}(Y)^{\op} ,
$$
and $\End_{\cA}(X)^\nat$ and $\End_{\cA}(Y)^\nat$ are finitely generated and projective as graded $A$-modules.
More generally, we prove that if $M$ and $N$ are dg $A$-modules whose underlying graded modules are finitely generated and projective,
then $\supp_{A \otimes_k A}(M \otimes_k N) = \supp_A(M) \times_{\Spec(k)} \supp_A(N)$.

  Given a homogeneous prime $\fp$ of $\Spec(k)$, we set $k_{(\fp)}$ to be the graded algebra obtained by inverting all homogeneous elements of $k \setminus \fp$
  and $\kappa^*(\fp) := k_{(\fp)}/\fp k_{(fp)}$. Note that $\kappa^*(\fp)$ is  a graded field, meaning that every non-zero homogeneous element is a unit.

  We claim that  for any homogeneous prime $\fp$ and dg $k$-module $L$ such that $L^\nat$ is finitely generated and projective,
  $\fp$ belongs to $\supp_k(L)$ if and only if $H(L \otimes_k \kappa^*{(\fp)}) \ne 0$.  To prove this, it suffices to  assume $k = k_{(\fp)}$ (i.e, $k$ is graded local) and show
  $H(L) \ne 0$ if and only if $H(L \otimes_k k/\fp) \ne 0$. 
Since $k$ is regular,
  $\fp$ is generated by a regular sequence $x_1, \dots, x_d$
  of homogeneous elements. Set $L_i = L/(x_1, \dots, x_i)L$. Since $L^\nat$ is projective, for each $i$ we have a short exact sequence
  $$
  0 \to L_i \xra{x_{i+1}} L_i \to L_{i+1} \to 0
  $$
  of dg $k$-modules. The long exact sequence in homology shows that $H(L_i) \ne 0$ if and only if $H(L_{i+1}) \ne 0$, and the claim follows.

  Since $A$ is essentially smooth over $k$, both $A$ and $A\otimes_k A$ are also regular. So, the claim just proven
  applies to modules over these graded rings as well.

Since $M$ and $N$ are finitely generated,
both $\supp_{A \otimes_k A} (M \otimes_k N)$ and  $\supp_A(M) \times_{\Spec(k)} \supp_A(N)$
are homogeneous Zariski closed subsets of $\Spec(A \otimes_k A) = \Spec(A) \times_{\Spec(k)} \Spec(A)$.
  In particular, it suffices to prove that a homogeneous prime $\fq$ ideal of $A \otimes_k A$ belongs to 
$\supp_{A \otimes_k A} (M \otimes_k N)$ if and only if it belongs to $\supp_A(M) \times_{\Spec(k)} \supp_A(N)$.
For such a $\fq$, let $\fp$ be its image in $\Spec^*(k)$, and set $A' = A \otimes_k k'$,
$M' = A' \otimes_A M$ and $N' = A' \otimes_A N$. By the claim proven above, 
$\fq$ lies in $\supp_{A \otimes_k A}(M \otimes_kN)$ if and only if it lies in $\supp_{A' \otimes_{\kappa^*(\fp)} A'}(M' \otimes_{\kappa^*(\fp)}N')$,
and similarly for $\supp_A(M) \times_{\Spec(k)} \supp_A(N)$. In other words, without loss of generality, we may assume $k$ is a graded field.

When $k$ is a graded field, we have the K\"unneth isomorphism $H^*(M \otimes_k N) \cong H^*(M) \otimes_k H^*(N)$, and the result follows
from the fact that
$$
\supp_{A \otimes_k A}(H^*(M) \otimes_k H^*(N)) = \supp_A H^*(M) \times_{\Spec(k)} \supp_A H^*(N),
$$
which holds since $H^*(M)$ and $H^*(N)$ are finitely generated, graded $A$-modules.
\end{proof}

\begin{proof}[Proof of Theorem \ref{thm47}]
Lemma \ref{lem818} gives
\begin{equation} \label{E817}
\supp_{\cA^e}(\psi(X,Y)) = \supp_A(X) \times_{\Spec(k)} \supp_A(Y).
\end{equation}
Since every object of $\Perfdg(\cA)$ is supported on $\Nonreg(A/w)$ (by Briggs' Theorem),
we have $\supp_{\cA^e}(\psi(X,Y)) \subseteq W$, and this proves that $\psi$ factors as claimed. 

By Proposition \ref{thickprop}, to show $\phi$ is a Morita equivalence, it suffices
to show the thick closure of the image of $\psi$ in  $[\Perfdg^W(\cA^e)]$ is all of  $[\Perfdg^W(\cA^e)]$.  
Since $k$ is excellent and $A$ is essentially of finite type over $k$, $A$ is also excellent. It follows that $\Nonreg(A/w)$ is a homogeneous closed subset of $\Spec(A)$.
In particular, 
Briggs' Theorem gives that there exist an object $X \in \Perfdg(\cA)$ with $\supp(X) = \Nonreg(A/w)$; see Corollary \ref{corA12}. 
Moreover, using Lemma \ref{lem818} again, we have  $\supp(\psi(X,X)) = W$, and thus another application of Briggs' Theorem gives 
$$
\Thick_{[\Perfdg(\cA^e)]}(\psi(X,X)) =  [\Perfdg^W(\cA^e)].
$$

We have shown $\phi$ is a Morita equivalence, and
the final assertion is thus a consequence of Proposition \ref{Moritaprop}. 
\end{proof}

\section{Completion of the proof of the first part of Theorem \ref{MainTheorem}} \label{sec:nochar}

In this section, we prove the portion of Theorem \ref{MainTheorem} that is valid without any assumption on the characteristic of $k$:

 \begin{thm} \label{thm1227} If $k$ is regular and excellent and $\cA$ is a smooth curved $k$-algebra,  then there is an isomorphism
$$
HH(\Perfdg(\cA)) \cong \R\Gamma_{\Nonreg(\cA)}(\Omega^\cdot_{A/k}, dw),
$$
in the derived category of dg $A$-modules. 
\end{thm}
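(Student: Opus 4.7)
The plan is to start from the identification \eqref{E419b},
$$
HH(\Perfdg(\cA)) \cong \R\Gamma_W\bigl(h^A \Lotimes_{\Perfdg(\cA^e)} t^A\bigr),
$$
and perform two remaining computations: first, identify the inner derived tensor product with the twisted de Rham complex $(\Omega^\cdot_{A/k}, dw)$ in $\cD(A)$; and second, show that $\R\Gamma_W$ reduces to $\R\Gamma_{\Nonreg(\cA)}$ on dg $A$-modules.

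For the first step, the heuristic is that $h^A \Lotimes_{\Perfdg(\cA^e)} t^A$ should compute a derived tensor product of the form $A \Lotimes_{\cA^e} A$ over the curved enveloping algebra $\cA^e$. I would make this rigorous by exploiting smoothness of $A/k$ to build a curved analogue of the de Rham resolution of the diagonal. In the classical (non-curved) case, one has a quasi-isomorphism $\bigoplus_p \Sigma^p \Omega^p_{A/k} \otimes_A A^e \xra{\sim} A$ of $A^e$-modules with the Koszul differential; in the curved setting I would modify this by adding a $w$-dependent correction so that the total differential squares to $w^e$ and still augments to $A$ in $\Moddg(\cA^e)$. Each graded piece $\Sigma^p \Omega^p_{A/k} \otimes_A A^e$ lies in $\Perfdg(\cA^e)$ since $\Omega^p_{A/k}$ is finitely generated projective over $A$, and since $\Perfdg(\cA^e)$ is pre-triangulated one obtains, via totalization, a resolution lifting to a semi-free resolution of $t^A$ over $\Perfdg(\cA^e)$. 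Applying $h^A \otimes_{\Perfdg(\cA^e)}(-)$ collapses the $A^e$-factor to $A$ along the diagonal, the Koszul part of the differential vanishes, and the curvature correction specializes to multiplication by $dw$, producing $(\Omega^\cdot_{A/k}, dw)$.

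For the second step, every dg $A$-module, viewed over $A^e$ via the multiplication map $A^e \onto A$, is supported on the diagonal $\Delta \subset \Spec(A^e)$. Under the identification $\Delta \cong \Spec(A)$, the intersection $W \cap \Delta$ is exactly $\Nonreg(\cA)$, since the pullback of $W = \Nonreg(\cA) \times_{\Spec(k)} \Nonreg(\cA)$ along $\Spec(A) \to \Spec(A) \times_{\Spec(k)} \Spec(A)$ is $\Nonreg(\cA) \cap \Nonreg(\cA) = \Nonreg(\cA)$. Local cohomology along a closed set $Y$ of a module supported on a smaller closed set $X \subseteq Y$ agrees with local cohomology along $Y \cap X$, so this yields $\R\Gamma_W(\Omega^\cdot_{A/k}, dw) \cong \R\Gamma_{\Nonreg(\cA)}(\Omega^\cdot_{A/k}, dw)$, completing the identification.

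The main obstacle will be the first step, and specifically the promotion of the curved Koszul resolution to a resolution of $t^A$ as a dg $\Perfdg(\cA^e)$-module, rather than merely a resolution of $A$ as a curved $\cA^e$-module. Because $A \notin \Perfdg(\cA^e)$, one cannot directly invoke the argument style of Example~\ref{ex1212b}: the passage from the extended module $t^A$ to a represented module $h_P$ for some $P \in \Perfdg(\cA^e)$ requires an $A^e$-linear duality between $h^A$ and $t^A$ whose existence relies on the smoothness hypothesis and must be set up carefully. The verification that the corrected differential on the curved Koszul complex squares to $w^e$ is a routine but delicate exterior-algebra calculation, driven by the local formula $dw = \sum_i (\partial w/\partial x_i)\, dx_i$.
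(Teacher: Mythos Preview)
Your overall strategy matches the paper's: compute $h^A \Lotimes_{\Perfdg(\cA^e)} t^A$ via a curved Koszul construction, then observe that $\R\Gamma_W$ restricts to $\R\Gamma_{\Nonreg(\cA)}$ on dg $A$-modules (you make this explicit; the paper leaves it implicit). The gap is in your execution of the first step.

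The claim that each graded piece $\Sigma^p\Omega^p_{A/k}\otimes_A A^e$ lies in $\Perfdg(\cA^e)$ is false. An object of $\Perfdg(\cA^e)$ is a finitely generated projective graded $A^e$-module \emph{equipped with a degree-one endomorphism squaring to~$w^e$}; a single graded piece carries no such endomorphism unless $w^e$ annihilates it. So your plan to totalize representable modules indexed by these pieces and thereby produce a semi-free resolution of $t^A$ cannot get started. You correctly flag this promotion as the main obstacle, but the proposed mechanism does not work. A secondary issue: even in the uncurved case, the Koszul map from a lift $P$ of $\Omega^1_{A/k}$ hits $I=\ker(A^e\to A)$ only modulo $I^2$, so ``$\bigoplus_p\Sigma^p\Omega^p_{A/k}\otimes_A A^e\xra{\sim}A$'' is valid only after a localization.

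The paper separates the two roles you are trying to merge. First (Lemma~\ref{lem43}), it builds the curved Koszul complex as a \emph{single} object $\tilde K=(\bigwedge_{A^e[1/\alpha]}P[1/\alpha],\,\partial_q+\lambda_\gamma)$ of $\Moddg(\cA^e)$---flat but not perfect, because of the localization just mentioned, which is also what allows $w^e$ to be lifted to $\gamma\in P[1/\alpha]$---and shows $h^{\tilde K}\to h^A$ is a quasi-isomorphism of right dg $\Perfdg(\cA^e)$-modules by a bicomplex argument using row-exactness. Second, and independently (Lemma~\ref{lem416}), it resolves $t^A$ using regularity of $A^e$: since $A$ has finite projective dimension over $A^e$, there is an exact sequence $0\to X_m\to\cdots\to X_0\to A\to 0$ in $\Mod(\cA^e)$ with each $X_i$ a genuine perfect curved module, giving a finite semi-free resolution $\Tot(h_{X_m^*}\to\cdots\to h_{X_0^*})\xra{\sim}t^A$. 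Tensoring with $h^{\tilde K}$ and using flatness of $\tilde K$ then yields $h^A\Lotimes_{\Perfdg(\cA^e)}t^A\cong A\otimes_{\cA^e}\tilde K\cong(\Omega^\cdot_{A/k},dw)$.
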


Before starting the proof, we set up some notation and establish a preliminary result.

Let $\mu: A^e \to A$ be the multiplication map, set $I = \ker(\mu)$, and recall that $I/I^2 \cong \Omega^1_{A/k}$. 
Let $P$ be any finitely generated projective $A^e$-module that is a lift of $\Omega^1_{A/k}$ along $\mu$, in the sense that there is an isomorphism $P/IP \cong \Omega^1_{A,k}$.
Such a $P$ exists since $\mu$ is split surjective; e.g., we may take $P = A \otimes_k \Omega^1_{A/k}$.
Then there is an induced surjection $P \onto I/I^2$, and, since $P$ is projective, this map lifts along $I \onto I/I^2$ 
to a map $q: P \to I$ of $A^e$-modules. Regarding $q$ as a map with target $A^e$, set $K = \Kos_{A^e}(q) = (\bigwedge_{A^e}(P), \del_q)$, the Koszul complex associated to $q$. 

Since $A$ is essentially smooth over $k$, the ideal $I$ is locally generated by a regular sequence. 
Hence, if the map $q$ were onto, then $K$ would be a free resolution of $A^e/I \cong A$. It need not be the case that $q$ is onto, but it is true locally on points in $V(I)$.
In detail, 
setting $C = I/\im(q)$ and $J = \ann_{A^e}(C)$,  since $\overline{q}: P/IP \cong I/I^2$ is an isomorphism, we have $C/IC = 0$ and thus  Nakayama's Lemma gives
that $V(I) \cap V(J) = \emptyset$. So, $J + I = A^e$ and, since $I$ and $J$ are homogeneous ideals, we have $1 = \a + \b$ for some elements $\a \in J$ and
$\b \in I$ that are homogeneous of degree $0$. It follows that $q: P[1/\a] \onto  I[1/\a]$ is a surjection of $A^e[1/\a]$-modules,  and that $K[1/\a] = \Kos_{A^e[1/\a]}(q)$ is a resolution of
$(A^e/I)[1/\a]  \cong A$. (Note that $\a$ is a unit modulo $I$.)

Since $P[1/\a]$ surjects onto $I[1/\a]$ and $w^e \in I$, there is a $\g \in P[1/\a]$ of degree $2$ satisfying $q(\g) = w^e$. We set $\tilde{K}$ to be the curved $\cA^e$-module
$$
\tilde{K} = (\extpower_{A^e[1/\a]} P[1/\a], \del_q + \l_\g)
$$
where $\del_q$ is the differential for the Koszul complex and $\l_\g$ is left multiplication by $\g$.
$\tilde{K}$ is indeed a curved module over $\cA^e$ because $d_q^2 = 0$, $\l_\g^2 = 0$, and $[d_q, \l_\g] = \l_{q(\g)} = \l_{w^e}$.
Moreover, there is a canonical morphism of curved $\cA^e$-modules $p: \tilde{K} \to A$. 

\begin{lem} \label{lem43} The map $h^{\tilde{K}} \to h^A$ of dg $\Perfdg(\cA^e)$-modules is a quasi-isomorphism.  
 \end{lem}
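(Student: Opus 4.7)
Plan: The claim to prove is that for every perfect curved $\cA^e$-module $Z$, the map
$p_*\colon \Hom_{\Moddg(\cA^e)}(Z,\tilde K)\to \Hom_{\Moddg(\cA^e)}(Z,A)$
is a quasi-isomorphism of dg $k$-modules.

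First I would reduce to the case where $\alpha$ is a unit. Since $\tilde K$ is by construction an $A^e[1/\alpha]$-module, and since $1=\alpha+\beta$ with $\beta\in I$ forces $\mu(\alpha)=1$ (so that $\alpha$ acts as the identity on $A$), both Hom complexes above coincide with their analogues computed over $(A^e[1/\alpha],w^e)$ with source $Z[1/\alpha]$, which is again a perfect curved module. Replacing $A^e$ by $A^e[1/\alpha]$, I may therefore assume $q\colon P\twoheadrightarrow I$ is surjective. In this case the ordinary Koszul complex $K=(\bigwedge P,\del_q)$ is a finite resolution of $A$ by finitely generated projective graded $A^e$-modules, and $\tilde K$ has the same underlying graded module as $K$, with $\lambda_\gamma$ added to the differential.

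The main step is a spectral sequence on $\Hom_{\Moddg(\cA^e)}(Z,\tilde K)$ coming from the decreasing filtration $F^n$ by ``cohomological degree plus exterior degree''. Reading off bidegrees $(i,p)=$(cohomological degree, exterior degree in $\tilde K$), the three pieces of the total differential act as: $\del_q\circ(-)$ has bidegree $(+1,-1)$ and preserves $F^n$; $\lambda_\gamma\circ(-)$ has bidegree $(+1,+1)$ and strictly raises it; and $(-)\circ d_Z$ has bidegree $(+1,0)$ and strictly raises it. So the $E_0$-differential is $\del_q$ alone, and since $Z^\nat$ is finitely generated projective and $K\to A$ is a quasi-isomorphism of complexes of graded $A^e$-modules, the $E_1$-page identifies with $\Hom_{A^e}(Z^\nat,A)$, concentrated in exterior degree zero. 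The induced $E_1$-differential is $(-)\circ d_Z$, which squares to zero on $\Hom_{A^e}(Z^\nat,A)$ because $d_Z^2=w^e$ and $\mu(w^e)=0$; this is precisely the differential of $\Hom_{\Moddg(\cA^e)}(Z,A)$. Any higher-page contribution from $\lambda_\gamma$ would have to send something in exterior degree zero to exterior degree $\ge 1$, which has already vanished at $E_1$, so the spectral sequence degenerates at $E_2$. The filtration is bounded above on each cohomological degree (since $\bigwedge^p P=0$ for $p>\rank P$), so it converges strongly, and the resulting isomorphism is induced by $p$ since $p$ is the edge morphism, namely the composition $\tilde K\twoheadrightarrow \bigwedge^0 P=A^e\xrightarrow{\mu}A$.

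The main subtlety is the bookkeeping: verifying the bidegree assignments above, ensuring that the spectral sequence converges, and checking that no hidden higher differentials from $\lambda_\gamma$ survive. All three rely on the single fact that the exterior direction is strictly bounded by $\rank P$, which confines the support of $E_1$ to a single exterior degree and makes the filtration eventually constant within each cohomological degree.
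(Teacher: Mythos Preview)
Your proof is correct and follows essentially the same approach as the paper. The paper works directly with the cone of $p_*$, organizes it as a tricomplex with horizontal maps $\del_q$ (and the augmentation to $A$), vertical maps $d_X$, and slanted maps $\lambda_\gamma$, and observes that the horizontal rows are exact (since $K[1/\alpha]$ resolves $A$ and $X^\nat$ is projective), hence the totalization is exact; your spectral sequence associated to the filtration by $i+p$ is exactly the standard mechanism that underlies that ``exact rows $\Rightarrow$ exact totalization'' argument, with your $E_1$-vanishing in positive exterior degree corresponding to their row-exactness and your degeneration at $E_2$ corresponding to the boundedness of the rows.
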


 \begin{proof} For any $X \in \Perfdg(\cA^e)$, we need to show that cone of the map of complexes $\Hom_{\cA^e}(X, \tilde{K}) \to \Hom_{\cA^e}(X, A)$ is exact. 
As a chain complex of abelian groups, this cone may be identified with the totalization of the diagram
   $$
     \begin{tikzcd}
        & \vdots \ar[d] & \vdots \ar[d] \ar[ddl] & & \vdots \ar[d] \ar[ddl] & \vdots \ar[d]  \\
        0 \ar[r] & B_n^{m-1} \ar[r] \ar[d] &  B_{n-1}^{m-1}  \ar[r] \ar[d] \ar[ldd]
        & \cdots \ar[r] & B_{0}^{m-1} \ar[r]\ar[d] \ar[ldd] & B_{-1}^{m-1} \ar[r]\ar[d]  & 0 \\
        0 \ar[r] & B_n^{m} \ar[r] \ar[d] &  B_{n-1}^{m}  \ar[r] \ar[d] \ar[ldd]
        & \cdots \ar[r] & B_{0}^{m} \ar[r]\ar[d] \ar[ldd] & B_{-1}^{m} \ar[r]\ar[d]  & 0 \\
        0 \ar[r] & B_n^{m+1} \ar[r] \ar[d] &  B_{n-1}^{m+1}  \ar[r] \ar[d] \ar[ldd]
        & \cdots \ar[r] & B_{0}^{m+1} \ar[r]\ar[d] \ar[ldd] & B_{-1}^{m+1} \ar[r]\ar[d]  & 0 \\
        0 \ar[r] & B_n^{m+2} \ar[r] \ar[d] &  B_{n-1}^{m+2}  \ar[r] \ar[d] 
        & \cdots \ar[r] & B_{0}^{m+2} \ar[r]\ar[d]  & B_{-1}^{m+2} \ar[r]\ar[d]  & 0 \\
     & \vdots & \vdots & \vdots & \vdots & \vdots & \vdots \\
     \end{tikzcd}
     $$
    where $B_j^i := \Hom{\cA^e}(X, \tilde{K}_j)^i$ for $0 \leq j \leq n$,
     $B_{-1}^i := \Hom{\cA^e}(X, A)^i$, the horizontal arrows are induced by $\del_q$ and the map $p$, the vertical arrows are induced by the differential on $X$, and the slanted
     arrows are induced by $\lambda_\gamma$. 
     Since $X \in \Perfdg(\cA)$, each row in this diagram is exact, and thus its totalization is also exact.
     \end{proof}

\begin{lem} \label{lem416} For a regular curved ring $\cA = (A, w)$, suppose $F$ is a left curved $\cA$-module such that $F^\nat$ is flat as an $A$-module and $N$ be a right curved $\cA$-module
  such that $N^\nat$ is finitely generated as an $A$-module. Then there is an isomorphism 
  $$
  h^F \Lotimes_{\Perfdg(\cA)} t^N \cong N \otimes_\cA F
  $$
  in the derived category of dg $A$-modules. (Recall $t^N$ is the left dg $\Perfdg(\cA)$-module given by $t^N(X) = N \otimes_{\cA} X$.)
\end{lem}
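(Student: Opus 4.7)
The strategy is to write down the natural evaluation pairing $\mu : h^F \otimes_{\Perfdg(\cA)} t^N \to N \otimes_\cA F$, verify it is a well-defined chain map, and show that it computes the derived tensor product under the stated hypotheses.

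\emph{Step 1: Construction of $\mu$.} For each $X \in \Perfdg(\cA)$, define
$$
\mu_X : h^F(X) \otimes_k t^N(X) = \Hom_\cA(X,F) \otimes_k (N \otimes_\cA X) \to N \otimes_\cA F, \quad g \otimes (n \otimes x) \mapsto n \otimes g(x).
$$
For any morphism $f : X \to Y$ in $\Perfdg(\cA)$, both $(g \circ f) \otimes (n \otimes x)$ and $g \otimes (n \otimes f(x))$ map to $n \otimes g(f(x))$, so the $\mu_X$ pass to the coend, yielding $\mu$. A direct check on differentials (using $\del_{N \otimes_\cA F}(n \otimes g(x)) = d_N n \otimes g(x) + (-1)^{|n|} n \otimes (d_F g)(x) + (-1)^{|n| + |g|} n \otimes g(d_X x)$ and the corresponding formulas on each factor) shows $\mu$ is a chain map of dg $A$-modules.

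\emph{Step 2: The representable case.} When $F \in \Perfdg(\cA)$ itself, $h^F$ is the representable right dg $\Perfdg(\cA)$-module associated to $F$. By the Yoneda isomorphism \eqref{E1029a}, $h^F \otimes_{\Perfdg(\cA)} t^N \cong t^N(F) = N \otimes_\cA F$, and this natural iso agrees with $\mu$. Since representable modules are semi-free, hence semi-flat (Proposition \ref{prop1111}), the ordinary and derived tensor products coincide, so the lemma holds in this case.

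\emph{Step 3: Extension to general $F$.} For arbitrary $F$ with $F^\nat$ flat, I plan to show that $h^F$ is an h-flat (in fact semi-flat) right dg $\Perfdg(\cA)$-module, so that $h^F \Lotimes_{\Perfdg(\cA)} t^N \cong h^F \otimes_{\Perfdg(\cA)} t^N$, and then verify that the underived evaluation $\mu$ is an isomorphism. For the first point, since each $X^\nat$ is finitely generated and projective over $A$, there is a natural graded isomorphism $\Hom_\cA(X,F)^\nat \cong X^{\nat,\vee} \otimes_A F^\nat$, which is flat over $A$ as a tensor product of two flat $A$-modules; this gives graded flatness of $h^F$ over $\Perfdg(\cA)^\nat$, and combined with the flatness of $A$ over $k$ (from essential smoothness), the standard criteria of \S\ref{subsec:flat} promote this to semi-flatness. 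For the second point, the coend $h^F \otimes_{\Perfdg(\cA)} t^N$ may be computed by the two-sided bar complex whose augmentation to $N \otimes_\cA F$ is $\mu$; the duality $\Hom_\cA(X,F) \cong X^{\nat,\vee} \otimes_A F^\nat$ lets one assemble a contracting homotopy from the trace/evaluation pairings $X^{\nat,\vee} \otimes_A X^\nat \to A$, showing $\mu$ is a quasi-isomorphism.

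\emph{Main obstacle.} The essential difficulty is in Step 3: one cannot simply write $F$ as a filtered colimit of perfect curved modules (Lazard's theorem produces such a colimit only at the level of graded $A$-modules, and lifting the differentials through the colimit is obstructed by the curvature condition $d^2 = w \cdot$). The most delicate technical point is therefore establishing semi-flatness of $h^F$ and the vanishing of the bar differential's homology in positive degree, where both hinge crucially on the hypothesis that $F^\nat$ is flat and $X^\nat$ is finitely generated projective for each $X \in \Perfdg(\cA)$.
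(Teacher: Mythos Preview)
Your Step 3 does not go through, and the gap is structural rather than cosmetic. The argument you sketch for semi-flatness of $h^F$ conflates two unrelated notions: the fact that $h^F(X)^\nat \cong X^{\nat,\vee}\otimes_A F^\nat$ is flat as a graded $A$-module says nothing about whether $h^F \otimes_{\Perfdg(\cA)} -$ is exact on left dg $\Perfdg(\cA)$-modules, which is what graded-flatness means in \S\ref{subsec:flat}. There is no criterion in that section that upgrades ``each value is $A$-flat'' to semi-flatness over the dg category, and the lemma does not even assume $A$ is essentially smooth over $k$, so the appeal to that is misplaced. The contracting-homotopy sketch has a related problem: the trace $X^{\nat,\vee}\otimes_A X^\nat \to A$ lands in $A$, but $A$ is not an object of $\Perfdg(\cA)$ when $w\neq 0$, so there is no extra degeneracy available at the level of curved modules. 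Most tellingly, your argument never invokes the hypotheses that $A$ is regular and $N^\nat$ is finitely generated; without them the statement is false, so any proof that ignores them cannot be complete.

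The paper resolves the \emph{other} variable. Regularity of $A$ and finite generation of $N^\nat$ give $\pd_A(N^\nat)<\infty$, so one can build a bounded exact sequence $0\to X_m\to\cdots\to X_0\to N\to 0$ of right curved $\cA$-modules with each $X_i$ perfect. Applying $t^{(-)}$ and using $t^{X_i}\cong h_{X_i^*}$ produces a \emph{finite} semi-free resolution $P\xra{\sim} t^N$; this is where the hypotheses on $N$ and $A$ are spent. Then $h^F\Lotimes_{\Perfdg(\cA)} t^N \cong h^F\otimes_{\Perfdg(\cA)} P$, and the Yoneda isomorphisms $h^F\otimes h_{X_i^*}\cong \Hom_\cA(X_i^*,F)\cong X_i\otimes_\cA F$ identify this with $\Tot(X_\bullet\otimes_\cA F)$. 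Only at this last step is flatness of $F^\nat$ used, to conclude that $\Tot(X_\bullet\otimes_\cA F)\xra{\sim} N\otimes_\cA F$. In short: resolve $t^N$ finitely (which is possible), not $h^F$ (which you cannot control).
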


\begin{proof} Since $N^\nat$ is finitely generated over $A$ and $A$ is regular, $\pd_A(N^\nat) = m < \infty$, and it follows that we can find an exact sequence of right curved $\cA$-modules of the form
  $$
  0 \to X_m \to \cdots \to X_0 \to N \to 0
  $$
  such that $X_i$ is perfect for each $i$. Since $\Hom_{\cA}(Y, -)$ is exact whenever $Y$ is perfect, the sequence
  \begin{equation} \label{E416}
  0 \to t^{X_m} \to \cdots \to t^{X_0} \to t^{N} \to 0
  \end{equation} 
 of dg $\Perfdg(\cA)$-modules is exact. For any perfect module $X$ we have $t^X \cong h_{X^*}$ where $X^* = \Hom_{\cA}(X, A)$.
Set  $P := \Tot(0 \to h_{X_m^*} \to \cdots \to h_{X_0^*} \to 0)$. Then $P$ is semi-free and \eqref{E416} gives a quasi-isomorphism $P \xra{\sim} t^N$ of dg $\Perfdg(\cA)$-modules. Thus
we have an isomorphism  $$
  h^F \Lotimes_{\Perfdg(\cA)} t^N \cong   h^F \otimes_{\Perfdg(\cA)} P
  $$
  in the derived category.
The isomorphisms $h^F \otimes_{\Perfdg(\cA)} h_{X_i^*} \cong h^F(X_i^*) \cong \Hom_{\cA}(X_i^*, F) \cong X_i \otimes_{\cA} F$ 
give an isomorphism
  $$
  h^F \otimes_{\Perfdg(\cA)} P
  \cong \Tot(0 \to X_m \otimes_{\cA} F \to \cdots \to X_0 \otimes_{\cA} F \to 0).
  $$
  Since $F^\nat$ is $A$-flat, the sequence
  $$
0 \to X_m \otimes_{\cA} F \to \cdots \to X_0 \otimes_{\cA} F \to N \otimes_{\cA} F \to 0
$$
is exact and hence we have a quasi-isomorphism
$$
\Tot(0 \to X_m \otimes_{\cA} F \to \cdots \to X_0 \otimes_{\cA} F \to 0) \xra{\sim} N \otimes_{\cA} F.
$$
\end{proof}

Let us now prove the theorem:

\begin{proof}[Proof of Theorem \ref{thm1227}]  Theorems \ref{thm711b} and \ref{thm47} give the isomorphism 
  $$
  HH(\Perfdg(\cA)) \cong \R\G_W \left( h^A \Lotimes_{\Perfdg(\cA^e)}  t^A\right).
  $$ 
  (The assumption that $k$ is excellent implies that
  $W$ is closed subset of $\Spec(A)$, and hence Theorem \ref{thm711b} applies.)
  Lemmas \ref{lem43} and \ref{lem416} give the isomorphisms 
  $$
  h^A \Lotimes_{\Perfdg(\cA^e)} t^A \cong h^{\tilde{K}} \Lotimes_{\Perfdg(\cA^e)} t^A \cong A \otimes_{\cA^e} \tilde{K}.
  $$
The graded $A^e$-module underlying $A \otimes_{\cA^e} \tilde{K}$ is
$$
A \otimes_{A^e} \extpower_{A^e[1/\alpha]} P[1/\alpha] \cong \extpower_A(P/IP) \cong \Omega^\cdot_A.
$$
The part of the differential on $\tilde{K}$ given by $\del_q$ vanishes on $A \otimes_{\cA^e} \tilde{K}$,
and the part given by $\l_\g$ corresponds to multiplication by $dw \in \Omega^1_A$.
Thus we have an isomorphism
$$
h^A \Lotimes_{\Perfdg(\cA^e)} t^A \cong (\Omega^\cdot_{A/k}, dw)
$$
in $\cD(A)$. 
\end{proof}

\section{Naturality and Connes operators in characteristic zero} \label{sec:char0}

In this section we complete the proof of Theorem \ref{MainTheorem} by establishing the
assertions regarding naturality and compatibility with the Connes' operators
in the case  when $k$ is a $\Q$-algebra. A key ingredient in the proof is the notions of type II derived tensor products and type II Hochschild homology, developed by Positselski \cite{Positselski} and Polishchuk-Positselski \cite{PP}.
In detail, we will reinterpret the isomorphism $HH(\Perfdg(\cA)) \cong \R\Gamma_{\Nonreg(\cA)} \left(h^A \Lotimes_{\Perfdg(\cA^e)} t^A\right)$ in terms of type II Hochschild homology.

\subsection{Type II tensor products}
We start be recalling some definitions and basic properties.  References for this material are \cite{Positselski} and \cite{PP}. 

A {\em cdg $k$-algebra} consists of a triple $(A,d, w)$ where $A$ is a graded $k$-module, $d$ a degree one $k$-linear map, and $w$ is a degree two element such that $d^2(a) = [w,
a] = wa - aw$ for all $a \in A$ and $d(w) = 0$. Note that a curved algebra is the special case of this more general concept with $d = 0$. 
More generally, a {\em $k$-linear cdg (curved differential graded) category}  $\cC$ consists a collection of objects $\ob \cC$,
for each pair of objects $X$ and $Y$, a pair $\Hom_\cC(X, Y) = (\Hom_\cC(X,Y)^\nat, d_{X,Y})$ consisting of a graded $k$-module and a degree one $k$-linear endomorphism (called the
{\em pre-differential}),
for each single object $X$ a degree two
element $w_X$ of $\Hom_\cC(X, Y)$ (called the {\em curvature}), and for each triple of objects $X, Y, Z$ a composition rule 
$\Hom_\cC(X, Y)^\nat \otimes \Hom_\cC(Y, Z)^\nat \to \Hom_\cC(X, Z)^\nat$. The compositions rules are required to be associative and unital just as for dg categories, 
but the  maps $d_{X, Y}$ need not square to $0$.
Instead, the following relations hold: (1) $d^2 = [w, -]$; that is,
for all pairs of objects $X, Y$ and elements $f \in \Hom_{\cC}(X, Y)$, we have $d_{X, Y}^2(f) = w_Y \circ f - f \circ w_X$ and (2) $d(w) = 0$; that is, for
each object $X$, we have $d_{X, X}(w_X) = 0$. 
The endomorphisms of any object of a cdg category form a cdg algebra, and a cdg category on just one object may be identified with a cdg algebra.

For example, the collection of {\em precomplexes} over $k$, written $\Pre(k)$, is a cdg category. Its objects are pairs $(M, d_M)$ consisting of a graded $k$-module $M$ and a
$k$-linear endomorphism $d_M$ of degree one, with no condition on $d_M^2$. The homs and  composition rules are given by the same formulas as for the dg category $\Moddg(k)$, 
and we define the curvature of $(M, d_M)$ to be $w_M := d_M^2 \in \End_k(M)$.

A (strict) cdg functor between two cdg categories $\cC$ and $\cD$ consists of a function $F: \ob \cC \to \ob \cD$ on objects and for each pair of objects $X, Y$ of $\cC$
a morphism of graded $k$-modules $F_{X, Y}: \Hom_\cC(X,Y)^\nat \to \Hom_\cD(F(X), F(Y))^\nat$ that preserves compositions, identity elements, and curvatures, and
commutes with the  pre-differentials.
There is a more general notion of a ``non-strict'' functor joining cdg categories, but since it only makes a brief appearance in this paper in a special setting,
we will not define it carefully. 

A {\em left cdg module} on a $k$-linear cdg category $\cC$ is a (strict) cdg functor from $\cC$ to $\Pre(k)$.
This generalizes the notion of a left curved module on a curved algebra used throughout this paper.
The collection of left cdg modules over $\cC$ forms a (non-curved) dg category, generalizing the fact that the collection of curved modules over a curved algebra forms a
dg category.

There is also the  notion of a {\em quasi-module} on an arbitrary  cdg category, defined by using the notion of a non-strict cdg functor
taking values  in $\Pre(k)$. Since we will only need this concept for curved algebras, we only define it at that level of generality:
Given a curved $k$-algebra $\cA = (A, w)$, 
a {\em left quasi-module} over $\cA$ consists of a left graded $A$-module $M$ and an $A$-linear map $d_M$ on $M$ of degree
one, but there is no condition on $d_M^2$.
Such a quasi-module is {\em perfect} if $M$ is finitely generated and projective as a graded  $A$-module.
The collection of perfect quasi-modules form an $A$-linear  cdg category, written $\qPerfcdg(\cA)$. The homs and pre-differentials for a pair of objects are defined just as for
$\Perfdg(\cA)$, and  
for each $P \in \qPerfcdg(\cA)$, the required curvature element  is defined to be $w_P := d_P^2 - \l_w \in \End(P)$, where $\l_w$ denotes left multiplication by $w$. 
Observe that $\Perfdg(\cA)$ is the full subcategory of $\qPerfcdg(\cA)$ consisting of the objects with trivial curvature. 

Although the $k$-algebra $A$ equipped with the trivial differential is {\em not} an object of the dg category $\Perfdg(\cA)$,
it {\em does} determine an object of $\qPerfcdg(\cA)$.
Moreover, its endomorphisms form the curved algebra $\cA^\op = (A, -w)$.
Abusing notation a bit, we identity $\cA^\op$ with the full cdg subcategory of $\qPerfcdg(\cA)$ on this object.
Similarly, we regard $\cA$ as a full cdg subcategory of $\qPerfcdg(\cA^\op)$
and $(\cA^e)^\op$ as a full cdg subcategory of $\qPerfcdg(\cA^e)$.

Each $k$-linear cdg category $\cC$ admits an opposite category $\cC^\op$ and enveloping category $\cC^e = \cC \otimes_k \cC^\op$,
defined just as for dg categories, but with curvature taken into account: For $X \in \ob(\cC)$, its curvature when regarded as an object of $\cC^\op$
is $-w_X$, and the curvature of a pair $(X, Y) \in \cC^e$ is $w_X \otimes 1 - 1 \otimes w_Y$. The curved algebras $\cA^\op$ and $\cA^e$ introduced before are special cases of
this. 

The fully-faithful dg functor $\psi: \Perfdg(\cA)^e \into \Perfdg(\cA^e)$ defined in \eqref{E1212} extends to a (strict) fully-faithful cdg functor
$\tpsi: \qPerfcdg(\cA)^e \into \qPerfcdg(\cA^e)$ using the same formulas on objects and homs. This is indeed a cdg functor since, for each pair $(X, Y)$,
it sends $w_X \otimes 1 - 1 \otimes w_Y = (d_X^2 - \l_w) \otimes 1 - 1 \otimes (d_Y^2 - \l_w)$ to $(d_X \otimes 1 - 1 \otimes d_Y)^2 - \l_{w \otimes 1 - 1 \otimes w}$. 
Observe that $\tpsi$ sends $(A^\op, A) \in \qPerfcdg(\cA)^e$ to  $(\cA^e)^{\op} \in \qPerfcdg(\cA^e)$, and the map on endomorphism cdgas is an isomorphism.
We thus have a diagram of fully-faithful functors of cdg categories
\begin{equation} \label{E414}
  \begin{tikzcd}
\Perfdg(\cA)^e \ar[r, hook, "\psi"] \ar[d, hook] & \Perfdg(\cA^e) \ar[d, hook] \\
\qPerfcdg(\cA)^e \ar[r, hook,  "\tpsi"] & \qPerfcdg(\cA^e) \\
(\cA^\op, \cA) \ar[r, "\cong"] \ar[u, hook] & (\cA^e)^\op \ar[u, hook] \\
\end{tikzcd}
\end{equation}
in which all vertical functors are inclusions.

For any cdg category $\cC$, left cdg $\cC$-module $M$, and a right cdg $\cC$-module $N$,
their {\em type II derived tensor product}, written $M \LotimesII_{\cC} N$,
is defined (somewhat loosely speaking) by taking direct product totalization of the bi-complexes obtained from a choice of
resolution; see \cite[p. 5325]{PP} for the precise definition, where $\LotimesII$ is written as $\Tor^{II}$.
(There is also an ordinary derived tensor product of $M$ and $N$, but it often vanishes.) 
When $\cC$ is a dg category (i.e., when all the curvature elements are trivial), there is a natural map
$$
M \Lotimes_{\cC} N \to M \LotimesII_{\cC} N
$$
joining the two kinds of derived tensor product. 
This map is a weak equivalence under special circumstances, but in general
the two theories enjoy rather different formal properties. For instance,
the type II derived tensor product does {\em not}, in general, preserve quasi-isomorphisms.
On the other hand, it is well-behaved with respect to ``pseudo-equivalences'' of cdg categories (see \cite[p. 5326]{PP}),
a property that will be exploited in the proof of Proposition \ref{prop416} below.

\subsection{Type II interpretations}
Recall that we have dg $\Perfdg(\cA^e)$-modules $h^A = \Hom_{\Perfdg(\cA^e)}(-, A)$ and $t^A = - \otimes_{\cA^e} A$.
They extend to cdg modules on $\qPerfcdg(\cA^e)$ using the same formulas. Abusing notation a bit, we use the same notation for these cdg modules.

\begin{prop} \label{prop416} With $k$ and $\cA$ as in Theorem \ref{MainTheorem}, there is a commutative diagram in $\cD(A)$ of the form
   \begin{equation} \label{E415} 
   \begin{tikzcd}
     \Deltaright\Lotimes_{\Perfdg(\cA)^e} \Deltaleft \ar[r, "\cong"] \ar[d, "\can"] & \R\Gamma_{\Nonreg(\cA)} \left(h^A \Lotimes_{\Perfdg(\cA^e)} t^A\right) \ar[r, "\can"]  &
     h^A \Lotimes_{\Perfdg(\cA^e)} t^A \ar[d, "\can", "\cong"'] \\
  \Deltaright\LotimesII_{\Perfdg(\cA)^e} \Deltaleft \ar[rr, "\cong"] \ar[d, "\cong"] &&  h^A \LotimesII_{\Perfdg(\cA^e)} t^A \ar[d, "\cong"] \\
  \Deltaright \LotimesII_{\qPerfcdg(\cA)^e} \Deltaleft \ar[rr, "\cong"]  &&  h^A \LotimesII_{\qPerfcdg(\cA^e)} t^A  \\
A \LotimesII_{\cA^e} A  \ar[rr, "="] \ar[u, "\cong"] &&   A \LotimesII_{\cA^e} A \ar[u, "\cong"] \\
 \end{tikzcd}
 \end{equation}
with isomorphisms as indicated. 
\end{prop}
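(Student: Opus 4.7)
The plan is to chase the diagram \eqref{E415} piecewise, using three main ingredients: (i) the chain of identifications already assembled in the proof of Theorem \ref{thm1227}, which establishes the top row; (ii) the bounded resolution $h^{\tilde{K}} \xra{\sim} h^A$ from Lemma \ref{lem43}, which reduces the comparison of derived tensor products to the representable case; and (iii) the invariance of the type II derived tensor product under pseudo-equivalences of cdg categories developed in \cite{Positselski,PP}.

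First, I would observe that the top row is precisely the factorization appearing in the proof of Theorem \ref{thm1227}: Theorem \ref{thm47} together with \eqref{E1215} identifies $\Deltaright \Lotimes_{\Perfdg(\cA)^e} \Deltaleft$ with $\phi_* h^A \Lotimes_{\Perfdg^W(\cA^e)} \phi_* t^A$, and Theorem \ref{thm711b} identifies the latter with $\R\Gamma_W\bigl(h^A \Lotimes_{\Perfdg(\cA^e)} t^A\bigr)$. Because $h^A \Lotimes_{\Perfdg(\cA^e)} t^A$ carries a natural $A$-module structure (its $A^e$-action factoring through the multiplication $\mu \colon A^e \to A$), local cohomology along $W = \Nonreg(\cA) \times_{\Spec(k)} \Nonreg(\cA)$ coincides with local cohomology along $\Nonreg(\cA)$, which gives the upper-right corner. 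The second arrow in the top row is the canonical map from local cohomology.

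For the rightmost vertical in the top row, I would invoke Lemma \ref{lem43}: since $h^{\tilde{K}} \xra{\sim} h^A$ is a quasi-isomorphism with $h^{\tilde{K}}$ representable, and representable modules are cofibrant for both the ordinary and the type II derived tensor product, the natural comparison map is a quasi-isomorphism. The two remaining verticals in the right column are instances of the invariance of $\LotimesII$ under the fully-faithful cdg functors $\Perfdg(\cA^e) \into \qPerfcdg(\cA^e)$ and $(\cA^e)^\op \into \qPerfcdg(\cA^e)$ from \eqref{E414}, each of which is a pseudo-equivalence in the sense of \cite{PP}. The left-hand column is entirely analogous, now using the inclusions $\Perfdg(\cA)^e \into \qPerfcdg(\cA)^e$ and $\cA^e \into \qPerfcdg(\cA)^e$.

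The middle and lower horizontal isomorphisms arise by applying the same invariance principle to the strict fully-faithful cdg functor $\tpsi \colon \qPerfcdg(\cA)^e \into \qPerfcdg(\cA^e)$ of \eqref{E414}, combined with the identifications $\tpsi_* h^A \cong \Deltaleft$ and $\tpsi_* t^A \cong \Deltaright$ that extend the dg analogues of \eqref{E1215} verbatim to the cdg setting. The bottom square then reduces to the identification of the endomorphism cdg algebra of $(\cA^\op,\cA) \in \qPerfcdg(\cA)^e$ with $(\cA^e)^\op \in \qPerfcdg(\cA^e)$ supplied by \eqref{E414}. Commutativity of each inner square follows from naturality of the canonical maps $\Lotimes \to \LotimesII$ and of the comparison maps induced by the various cdg functors. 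The main technical obstacle I anticipate is verifying in detail that each of the above inclusions is genuinely a pseudo-equivalence in the precise sense required by \cite{Positselski,PP}, and that the corresponding invariance statements apply to the specific modules $h^A$, $t^A$, $\Deltaright$, $\Deltaleft$ arising here.
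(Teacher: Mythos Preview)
Your overall strategy matches the paper's, but two of the steps as you have written them do not go through.

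First, for the upper-right vertical you invoke Lemma \ref{lem43} and call $h^{\tilde K}$ representable. It is not: the curved module $\tilde K$ is built over the localization $A^e[1/\alpha]$, so it is not finitely generated projective over $A^e$ and hence does not lie in $\Perfdg(\cA^e)$. The paper instead argues as in the proof of Lemma \ref{lem416}: since $A^e$ is regular and $A$ is a finitely generated $A^e$-module, one can build a bounded exact sequence $0 \to P_m \to \cdots \to P_0 \to A \to 0$ with each $P_i \in \Perfdg(\cA^e)$, yielding a bounded resolution of $h^A$ by genuinely representable modules $h^{P_i}$. Since $\Lotimes$ and $\LotimesII$ visibly agree when one argument is representable, and both carry short exact sequences to distinguished triangles, the canonical map $h^A \Lotimes_{\Perfdg(\cA^e)} t^A \to h^A \LotimesII_{\Perfdg(\cA^e)} t^A$ is an isomorphism.

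Second, you argue the two middle horizontal maps are isomorphisms by applying pseudo-equivalence invariance to $\tpsi$ (and implicitly $\psi$). Neither functor is asserted to be a pseudo-equivalence, and in fact $\psi\colon \Perfdg(\cA)^e \to \Perfdg(\cA^e)$ is typically \emph{not} a Morita equivalence---that is precisely why the factorization through $\Perfdg^W(\cA^e)$ and Briggs' Theorem are needed in Section \ref{SecSupp}. The paper sidesteps this entirely: once one has the top-left isomorphism (Theorems \ref{thm711b} and \ref{thm47}), the top-right vertical (via the bounded resolution above), and the remaining four verticals (the \emph{vertical} inclusions in \eqref{E414} are the ones that are pseudo-equivalences), commutativity of the diagram together with the bottom row being the identity forces the two middle horizontal maps to be isomorphisms. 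No direct argument for them is required or given.
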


\begin{proof}
The restrictions of $h^A$ and $t^A$ along $\tpsi$ give the cdg modules $\Deltaright$ and $\Deltaleft$, just as with $\psi$. 
Moreover,  the restrictions of each of  $h^A$ and $t^A$ to the one object category $(\cA^e)^\op$ give 
$A$ (regarded as a left or right curved $\cA^e$-module). Combining these facts with  diagram \eqref{E414}
and the naturality of the map from $\Lotimes$ to $\LotimesII$ gives the commutativity of \eqref{E415}  and the fact that the bottom row may be taken to be the identity map. 

The upper-left map in \eqref{E415} is an isomorphism by Theorems \ref{thm711b} and \ref{thm47}.

As in the proof of Lemma \ref{lem416}, since $A^e$ is regular and $A$ is finitely generated, we can construct
an exact sequence $0 \to P_m \to \cdots \to P_0 \to A \to 0$ with $P_i \in \Perfdg(\cA^e)$ for each $i$, and this leads to an exact sequence
\begin{equation} \label{E423}
0 \to h^{P_m} \to h^{P_{m-1}} \to \cdots \to h^{P_1} \to h^{P_0} \to h^A \to 0 
\end{equation}
of dg $\Perfdg(\cA^e)$-modules. That is, $h^A$ admits a bounded resolution by representable modules.
It is clear from the definitions of $\Lotimes$ and $\LotimesII$ that they agree if one of the arguments is representable.
Moreover,  both $\Lotimes$ and $\LotimesII$ send short exact sequence to distinguished triangles (see \cite[p. 5326]{PP}).
It thus follows from \eqref{E423} the canonical map
$h^A \Lotimes_{\Perfdg(\cA^e)} t^A \to h^A \LotimesII_{\Perfdg(\cA^e)} t^A$ is an isomorphism.

The remaining vertical maps are isomorphisms since each vertical inclusion in \eqref{E414} is a pseudo-equivalence and,  as proven in \cite[p. 5326]{PP},
pseudo-equivalences induce isomorphisms on type II tensor products.

It follows that the two middle horizontal maps are also isomorphisms.
 \end{proof}

 For a $k$-linear cdg category $\cC$ we define its {\em type two Hochschild homology} to be 
 $$
 HH^{II}(\cC) := \Deltaright \LotimesII_{\cC^e} \Deltaleft \in \cD(k).
 $$
There is a natural morphism $HH(\cC) \to HH^{II}(\cC)$ in $\cD(k)$ induced by the map from $\Lotimes$ to $\LotimesII$. With this notation,   Proposition \ref{prop416}
immediately implies:

 \begin{cor}  With $k$ and $\cA$ as in Theorem \ref{MainTheorem}, the canonical map $HH(\Perfdg(\cA)) \to HH^{II}(\Perfdg(\cA))$ induces an isomorphism
   $$
   HH(\Perfdg(\cA)) \xra{\cong} \R\Gamma_{\Nonreg(\cA)} HH^{II}(\Perfdg(\cA))
   $$
   in the derived category of dg $A$-modules.
 \end{cor}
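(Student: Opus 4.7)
The plan is to deduce the corollary directly from Proposition \ref{prop416}, since every ingredient is already assembled in diagram \eqref{E415}. First, I would observe that by definition the canonical map $HH(\Perfdg(\cA)) \to HH^{II}(\Perfdg(\cA))$ is precisely the leftmost vertical arrow of \eqref{E415}, joining $\Deltaright\Lotimes_{\Perfdg(\cA)^e}\Deltaleft$ to $\Deltaright\LotimesII_{\Perfdg(\cA)^e}\Deltaleft$. The top-row isomorphism of \eqref{E415} already identifies $HH(\Perfdg(\cA))$ with $\R\Gamma_{\Nonreg(\cA)}\bigl(h^A\Lotimes_{\Perfdg(\cA^e)} t^A\bigr)$; in particular, $HH(\Perfdg(\cA))$ is supported on $\Nonreg(\cA)$, so the canonical map factors uniquely in $\cD(A)$ through the canonical morphism $\R\Gamma_{\Nonreg(\cA)} HH^{II}(\Perfdg(\cA)) \to HH^{II}(\Perfdg(\cA))$.

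Second, I would apply the functor $\R\Gamma_{\Nonreg(\cA)}$ to the middle-row isomorphism $\Deltaright\LotimesII_{\Perfdg(\cA)^e}\Deltaleft \cong h^A\LotimesII_{\Perfdg(\cA^e)} t^A$ of \eqref{E415}, then invert the right-column isomorphism $h^A\Lotimes_{\Perfdg(\cA^e)} t^A \cong h^A\LotimesII_{\Perfdg(\cA^e)} t^A$, and finally use the top-row isomorphism to conclude
$$
\R\Gamma_{\Nonreg(\cA)} HH^{II}(\Perfdg(\cA)) \;\cong\; \R\Gamma_{\Nonreg(\cA)}\bigl(h^A\LotimesII_{\Perfdg(\cA^e)} t^A\bigr) \;\cong\; \R\Gamma_{\Nonreg(\cA)}\bigl(h^A\Lotimes_{\Perfdg(\cA^e)} t^A\bigr) \;\cong\; HH(\Perfdg(\cA)).
$$

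The main thing to check is that the composite inverse coming from reading around the diagram really does agree with the map induced by the canonical morphism $HH \to HH^{II}$; I do not expect this to be difficult since each square of \eqref{E415} commutes in $\cD(A)$, and $\R\Gamma_{\Nonreg(\cA)}$ applied to a commutative diagram remains commutative. Concretely, I would verify that the triangle
$$
\begin{tikzcd}
HH(\Perfdg(\cA)) \ar[r, "\can"] \ar[rd, "\cong"'] & HH^{II}(\Perfdg(\cA)) \\
& \R\Gamma_{\Nonreg(\cA)} HH^{II}(\Perfdg(\cA)) \ar[u, "\can"']
\end{tikzcd}
$$
commutes by a diagram chase in \eqref{E415} using naturality of $\R\Gamma_{\Nonreg(\cA)} \to \id$. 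This is a routine bookkeeping step rather than a genuine obstacle; the substantive content of the corollary is already contained in Proposition \ref{prop416}.
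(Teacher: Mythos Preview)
Your proposal is correct and matches the paper's approach: the paper simply states that the corollary follows immediately from Proposition \ref{prop416}, and your argument is a faithful unpacking of that implication via the commutativity of diagram \eqref{E415}. The only thing worth noting is that the top row of \eqref{E415} is a pentagon rather than a square (the composite of an isomorphism and the counit $\R\Gamma_{\Nonreg(\cA)}\to\id$), but your diagram chase handles this correctly.
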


 \subsection{Hochschild complexes}
 In order to establish the naturality of our isomorphism and to even define the Connes' operators, we need a concrete model for the Hochschild homology complex.

 For a small cdg category $\cC$, the {\em Hochschild complex}, written $\Hoch(\cC)$, is the explicit dg $k$-module defined as a follows. The underlying graded $k$-module
 is the direct {\em sum} totalization of the collection $\{\Sigma^n \cH_n(\cC) \}_{n \geq 0}$ where
  $$
 \cH_n(\cC) := \bigoplus_{X_0, \dots, X_n} \Hom_C(X_1, X_0) \otimes_k \Hom_C(X_2, X_1) \otimes_k  \cdots \Hom_C(X_{n}, X_{n-1}) \otimes_k \Hom_C(X_0, X_n) 
 $$
 The differential on $\Hoch(\cC)$ has three components, arising from the classical Hochschild differenital $b$, maps coming from the
  (pre)differentials on $\Hom_{\cC}(X,Y)$, and maps induced by the curvature elements.
 
 For instance, suppose $\cC$ has just one object, and thus may be identified with a cdga $(A, d, h)$. 
If we further specialize to the case when $d = 0 = h$ and $A$ is concentrated in degree $0$, 
 then $\Hoch(\cC) = \Hoch(A)$ is the classical Hochschild complex
 $$
 \cdots \xra{b} A^{\otimes n+1}  \xra{b}  A^{\otimes n}  \xra{b}  \cdots \xra{b} A^{\otimes 2}  \xra{b}  A \to 0
 $$
 where
 $$
 b(a_0 \otimes a_1 \otimes \cdots \otimes a_n)
= (-1)^n a_n a_0 \otimes a_1 \otimes \cdots \otimes a_{n-1} + 
 \sum_{i=0}^{n-1} (-1)^i a_0 \otimes a_1 \otimes \cdots \otimes a_i a_{i+1} \otimes \cdots \otimes a_n.
 $$
If $A$ is graded, the signs are suitably modified, we get a complex of graded $A$-modules,  and $\Hoch(A)$ is its direct sum totalization.
More generality, if $A$ is a dga,  the  formula for the differential also includes terms of the form
$$
\pm a_0 \otimes a_1 \otimes \cdots \otimes d(a_i)  \otimes \cdots \otimes a_n
$$
 and if $A$ is a cdga, the differential includes terms of the form 
 $$
 \pm a_0 \otimes a_1 \otimes \cdots \otimes a_i \otimes h_A \otimes a_{i+1}   \otimes \cdots \otimes a_n.
 $$
Finally, in full generality, the differential on $\Hoch(\cC)$ is given by these same formulas, but now interpreting the $a_i$'s to be elements of
 $\Hom_{\cC}(X_{i+1}, X_{i})$ (with $X_{n+1} := X_0$).  
 See \cite[\S 2.4]{PP} or \cite[\S 3.1]{BWChern} for the precise formulas. 

$\Hoch(-)$ is covariantly functorial for (strict) cdg functors, taking values in the category of dg $k$-modules (not merely the derived
category of such); see \cite[\S 2.4]{PP}. 

We also have the {\em type two Hochschild complex}, written $\HochII(\cC)$,  defined in the same way, except that one takes the {\em direct product} totalization of the family of
graded $k$-modules $\{\cH_n(\cC)\}_{n \geq 0}$. 
$\HochII(-)$ is also covariantly functorial for cdg categories, taking values in the category of dg $k$-modules, and there is a natural map $\Hoch(-) \to \HochII(-)$.

\begin{prop} \label{prop415c} \cite[Proposition A in \S 2.4 and  p.5333]{PP} 
  For each small $k$-linear cdg category $\cC$, there are isomorphisms
   $$
   \Hoch(\cC) \cong HH(\cC) = \Deltaright \Lotimes_{\cC} \Deltaleft
   \and
      \HochII(\cC) \cong HH^{II}(\cC) = \Deltaright \LotimesII_{\cC} \Deltaleft
   $$
   in the derived category of dg $k$-modules.
   \end{prop}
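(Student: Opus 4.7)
The plan is to prove both isomorphisms simultaneously via an explicit bar resolution adapted to the cdg setting. Let $B_n(\cC)$ denote the graded $\cC^e$-module
$$
B_n(\cC) = \bigoplus_{X_0,\dots,X_n} h_{X_0}\otimes_k \Hom_\cC(X_1,X_0)\otimes_k\cdots\otimes_k\Hom_\cC(X_n,X_{n-1})\otimes_k h^{X_n},
$$
equipped with the bigraded pre-differential built from the standard bar boundary $b$, the internal pre-differentials on the hom complexes, and correction terms coming from the curvature elements $w_X$. The direct sum totalization of the family $\{\Sigma^n B_n(\cC)\}_{n\geq 0}$ yields a dg $\cC^e$-module $B(\cC)$, and the multiplication maps define an augmentation $\epsilon:B(\cC)\to\Deltaleft$ of dg $\cC^e$-modules.

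The first step is to verify that $B(\cC)$ is a semi-free, and in particular semi-flat, dg $\cC^e$-module and that $\epsilon$ is a quasi-isomorphism. Semi-freeness follows from the filtration by the submodules $F_pB(\cC)=\bigoplus_{n\leq p}\Sigma^n B_n(\cC)$, each subquotient of which is a coproduct of suspensions of representable $\cC^e$-modules $h_{X_0}\otimes h^{X_n}$. The quasi-isomorphism assertion reduces, after evaluation at each object $(X,Y)\in\cC^e$, to the classical fact that the one-sided bar resolution of the diagonal is contractible, twisted here by the curvature and pre-differential terms; the contracting homotopy is assembled from identity insertions, exactly as in the uncurved case, and only the verification that $d^2=0$ depends on the curvature axioms.

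With the resolution in hand, computing $\Deltaright\Lotimes_{\cC^e}\Deltaleft\cong\Deltaright\otimes_{\cC^e}B(\cC)$ is a bookkeeping exercise: the tensor product over $\cC^e$ merges the outer factors $h_{X_0}\otimes_k h^{X_n}$ into a single copy of $\Hom_\cC(X_0,X_n)$, turning $\Deltaright\otimes_{\cC^e}B_n(\cC)$ into $\cH_n(\cC)$, and the induced differential matches the one specified in the definition of $\Hoch(\cC)$. Taking the direct sum totalization produces the first isomorphism. For the type II case, I would recall that, by definition, $\Deltaright\LotimesII_{\cC^e}\Deltaleft$ is computed as the direct product totalization of the same bicomplex $\Deltaright\otimes_{\cC^e}B_\bu(\cC)$; this directly yields $\HochII(\cC)$, proving the second isomorphism.

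The main obstacle will be controlling the curvature contributions in the bar differential. In the uncurved dg setting, $d^2=0$ and contractibility follow from a short calculation, but in the cdg case one must show that the curvature correction terms, the pre-differential terms, and the standard bar boundary interact so that $d^2$ vanishes globally even though $d_{X,Y}^2=[w,-]\neq 0$ on each hom complex. This bookkeeping is essentially what occupies \cite[\S 2.4]{PP}, and aside from it the argument is formal. A secondary subtlety is that, for the type II statement, one must verify that the product totalization of $\Deltaright\otimes_{\cC^e}B_\bu(\cC)$ agrees with the definition of $\LotimesII$ up to canonical isomorphism, which is immediate from the construction in \cite[p.~5325]{PP} once one knows that $B(\cC)$ is a resolution by representables.
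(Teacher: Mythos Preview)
The paper does not supply a proof; the proposition is quoted from \cite{PP}. Your bar-resolution strategy is the same one used there, and for the type~II statement it is essentially forced by the very definition of $\LotimesII$ in \cite[p.~5325]{PP}.

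There is, however, a genuine gap in your semi-freeness argument once curvature is present. The curvature component of the bar differential inserts a factor $w_X$ and therefore sends $B_n(\cC)$ into $B_{n+1}(\cC)$; consequently the filtration $F_pB(\cC)=\bigoplus_{n\le p}\Sigma^n B_n(\cC)$ is \emph{not} stable under the differential, so $F_p$ is not a sub-(c)dg-module of $B(\cC)$, and your claim that the subquotients are free collapses. This is not the ``bookkeeping'' you flag later---it is a structural obstruction: for a genuinely curved $\cC$ the direct-sum-totalized bar complex is typically not semi-free (or even h-flat) over $\cC^e$ in any evident way, and one cannot simply invoke Proposition~\ref{hflatcor}. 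In \cite{PP} the type~I isomorphism is obtained by a different route (and indeed both sides are frequently zero when the curvature is nontrivial), while the type~II isomorphism works precisely because $\LotimesII$ is \emph{defined} via product totalization of a bar-type complex, so that no semi-freeness hypothesis is required.
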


   \begin{defn} When $k$ is a $\Q$-algebra, the {\em Chern character map}
     $$
     ch^{II}:  \HochII(\cA) \to (\Omega^\cdot_A, {dw})
     $$
is the map of dg $A$-modules determined by 
     $$
     ch^{II}(a_0 \otimes a_1 \otimes \cdots \otimes a_n) = \frac{1}{n!} a_0 da_1 \cdots da_n \in \Omega^n_{A/k}.
     $$
     We write $ch$ for the restriction of this map along the canonical map  $\Hoch(\cA) \to \HochII(\cA)$.
   \end{defn}
   
The following was proven by Efimov:
   
   \begin{prop} \cite[3.13]{Ef} \label{prop415b}
     Assume $k$ is a $\Q$-algebra and $\cA = (A, w)$ is a smooth curved $k$-algebra. Then the Chern character map
     $ch^{II}: \HochII(\cA) \to (\Omega^\cdot_A, {dw})$
     is a quasi-isomorphism of dg $A$-modules.
\end{prop}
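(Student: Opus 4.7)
The plan is a spectral sequence reduction to the classical Hochschild--Kostant--Rosenberg (HKR) theorem for the underlying uncurved smooth commutative algebra $A$.

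First I would verify directly that $ch^{II}$ is a chain map. The differential on $\HochII(\cA)$ has two parts: the classical Hochschild differential $b$, and the curvature-insertion operator which sums over the $n+1$ positions at which $w$ can be inserted into $a_0 \otimes \cdots \otimes a_n$. Under $ch^{II}$ the piece $b$ lands in zero (as in the uncurved HKR), while the curvature insertion lands in multiplication by $dw$: graded-commutativity in $\Omega^\cdot_{A/k}$ brings $dw$ to the front, and the factor $(n+1)$ from the sum over insertion positions absorbs one factor from $(n+1)!$ to leave the coefficient $\tfrac{1}{n!}$. The characteristic-zero hypothesis is used here to invert the factorials.

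Second I would equip both complexes with a compatible decreasing filtration by Hodge weight. On the target, set $F^p(\Omega^\cdot_{A/k}, dw) = \bigoplus_{q \geq p} \Sigma^q \Omega^q_{A/k}$; this is a subcomplex filtration since $dw$ raises form degree, and it is bounded because $\Omega^q_{A/k} = 0$ for $q > \dim_k A$ by smoothness. A compatible filtration on $\HochII(\cA)$ can be constructed using tensor length, possibly combined with a separate curvature-weight grading since the naive tensor-length filtration is not preserved by $b$ alone. The Chern character is strict with respect to these filtrations, and on the associated gradeds the induced map is precisely the classical HKR map $\Hoch(A) \to \Omega^\cdot_{A/k}$, which is a quasi-isomorphism over $\Q$.

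The main technical obstacle is convergence of the source spectral sequence: the curvature component of the differential pushes elements into arbitrarily high tensor length, so without care the filtration would not be complete. The direct-product totalization built into the definition of $\HochII$ (as opposed to the direct-sum defining $\Hoch$) is designed precisely to make this filtration complete; combined with the boundedness of the target spectral sequence, a standard comparison theorem of filtered complexes then lifts the $E_1$-quasi-isomorphism coming from HKR to a quasi-isomorphism of the abutments, completing the argument.
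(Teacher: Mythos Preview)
The paper does not supply a proof of this proposition; it is quoted as a result of Efimov \cite[3.13]{Ef}. Your overall strategy---reduce to classical HKR by filtering, and use the product totalization to guarantee convergence---is indeed the idea behind the known arguments, and your verification that $ch^{II}$ intertwines the curvature insertion with multiplication by $dw$ is correct. But the filtration step, as written, has a genuine gap.

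The decreasing tensor-length filtration $F^p = \prod_{n \ge p}\Sigma^n\cH_n$ is \emph{not} a filtration by subcomplexes: the Hochschild boundary $b$ sends $\cH_n$ to $\cH_{n-1}$, so $b(F^p)\not\subseteq F^p$, and there is no induced differential on the associated graded in the usual sense. You flag this yourself, but the proposed remedy (``possibly combined with a separate curvature-weight grading'') does not work either: once $b$ multiplies two adjacent tensor factors, any previously inserted $w$ is absorbed into an element of $A$ and the weight is lost. Two standard ways to close the gap are as follows. One is the Homological Perturbation Lemma: start from the explicit HKR deformation retract $(\Omega^\cdot_{A/k},0)\rightleftarrows(\Hoch(A),b)$ with homotopy $h$ available over $\Q$, and view the curvature insertion $\delta$ as a perturbation of $b$; the perturbation series $\sum_{k\ge 0}(h\delta)^k$ converges exactly because each $\delta$ raises tensor length and the type~II complex is the product totalization, and the transferred structure on $\Omega^\cdot_{A/k}$ is $(dw, ch^{II})$. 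Alternatively, replace tensor length by the Hodge (Gerstenhaber--Schack) decomposition $\Hoch(A)=\bigoplus_{p}\Hoch^{(p)}(A)$, which exists in characteristic zero: each piece is already a $b$-subcomplex with homology $\Sigma^p\Omega^p$, and one checks that $\delta$ raises this weight, so $F^p=\prod_{q\ge p}\Hoch^{(q)}$ is a bona fide complete filtration by subcomplexes whose associated graded realises the classical HKR map. Either route repairs the argument; the naked tensor-length filtration does not.
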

   
\subsection{Naturality and the Connes operator}
For any $k$-linear cdg category $\cC$, the dg $k$-modules $\Hoch(\cC)$ and $\HochII(\cC)$ each admit a  {\em Connes operators} $B$. These are $k$-linear endomorphisms of
homological degree $1$, determined by the formula 
$$
B|_{\cH_n} = (1-\tau^{-1}_{n+2}) \circ s_0 \circ \sum_{l = 0}^n \tau^l_{n+1} 
$$
where $s_0: \cH_n \to \cH_{n+1}$ is the ``extra degeneracy map'' and $\tau_m: \cH_m \to \cH_m$ is (up to a sign) given by cyclic permutation;
see \cite[\S 3.1]{BWChern} for more details.
These operators are natural for (strict) functors of $k$-linear cdg categories.

It is important to be aware that the Connes operators need not preserve additional linearity that $\Hoch(\cC)$ may possess. For instance, $\Hoch(\Perfdg(\cA))$ is a dg $A$-module,
but the Connes operator $B$ is {\em not} $A$-linear, merely $k$-linear.

When $k$ is a $\Q$-algebra,  for any smooth curved $k$-algebra $\cA = (A, w)$, we have established a sequence 
\begin{equation} \label{E417} 
\Hoch(\Perfdg(\cA)) \xra{\can} \HochII(\Perfdg(\cA)) \xra{\sim} \HochII(\qPerfcdg(\cA)) \xla{\sim}  \HochII(\cA) \xra{\sim} (\Omega^\cdot_A, dw)
\end{equation}
of maps of dg $A$-modules, with the arrows labeled by $\sim$ being quasi-isomorphisms. (Only the last map requires $k$ to be a $\Q$-algebra.)
Moreover, each map in this sequence is natural for (strict)  morphisms of curved algebras, the first three maps are compatible with the Connes operators, and
the final map, $\HochII(\cA) \xra{\sim} (\Omega^\cdot_A, dw)$, relates the Connes operator with the de Rham differential \cite[3.14]{Ef}. 

Since the first map in \eqref{E417} induces a quasi-isomorphism
$\Hoch(\Perfdg(\cA)) \xra{\sim} \R\Gamma_{\Nonreg(\cA)}\HochII(\Perfdg(\cA))$, we obtain a ``zig-zag'' diagram of natural quasi-isomorphisms of dg $A$-modules
joining $\Hoch(\Perfdg(\cA)$ and $\R\Gamma_{\Nonreg(\cA)}(\Omega^\cdot_A, dw)$.
This proves the naturality assertion in Theorem \ref{MainTheorem}, 
but in order to deal with the non-$A$-linear Connes operators, we will need to be more careful about the role of local cohomology.

For any tuple  $\ug = (g_1, \dots, g_m)$, setting $W = V(g_1, \dots, g_m)$,   recall that 
$$
\R\G_W(\Omega^\cdot_{A/k}, dw) = \Tot\left((\Omega^\cdot_{A/k}, dw) \to \bigoplus_i (\Omega^\cdot_{A/k}, dw)[1/g_i] \to \cdots\right)
$$
Using the natural isomorphisms $\Omega^\cdot_{A/k}[1/g]  \cong \Omega^\cdot_{A[1/g]/k}$, we may identify this with the totalization of
$$
(\Omega^\cdot_{A/k}, dw) \to \bigoplus_i (\Omega^\cdot_{A[1/g_i]/k}, dw)\to \cdots.
$$
The maps in this sequence are compatible with the various de Rham differentials, and thus
$\R\Gamma_W(\Omega^\cdot_{A/k}, dw)$ acquires a de Rham differential. 

In a similar way, for each homogeneous element $g$ of $A$, we have a  canonical map
$$
\Hoch(\Perfdg(\cA))[1/g] \to \Hoch(\Perfdg(\cA[1/g])),
$$
which we claim is a quasi-isomorphism. This follows from the isomorphisms
$$
\Hoch(\Perfdg(\cA))[1/g] \cong \R\Gamma_{\Nonreg(\cA)}(\Omega^\cdot_{A/k}, dw)[1/g]
\and
\Hoch(\Perfdg(\cA[1/g])) \cong R\Gamma_{\Nonreg(\cA[1/g])}(\Omega^\cdot_{A[1/g]/k}, dw) 
$$
in the derived category of dg $A[1/g]$-modules. 
We are thus justified in setting
$$
\begin{aligned}
  \R\G_W \Hoch(\Perfdg(\cA)) = & \Tot(0 \to \Hoch(\Perfdg(\cA)) \to \bigoplus_i \Hoch(\Perfdg(\cA[1/g_i])) \to \cdots \\
  & \to \Hoch(\Perfdg(\cA[1/g_1\cdots g_n])) \to 0). \\
\end{aligned}
$$

Taking $W = \Nonreg{\cA}$ and using these interpretations of local cohomology, we see that upon applying $\R \Gamma_{\Nonreg(\cA)}$ to \eqref{E417} we
obtain a sequence of quasi-isomorphisms that are both natural and compatible with the Connes operators/de Rham differential.
Since $\Hoch(\Perfdg(\cA))$ is supported on $\Nonreg{\cA}$, the canonical map
$$
\R\G_{\Nonreg(\cA)} \Hoch(\Perfdg(\cA)) \xra{\sim} \Hoch(\Perfdg(\cA))
$$
is also a quasi-isomorphism, compatible with the Connes operator. 

We have thus proven the following result, which completes the proof of that portion of Theorem \ref{MainTheorem} that assumes $k$ is a $\Q$-algebra.

\begin{thm} \label{thm418}
For $k$ and $\cA$ as in Theorem \ref{MainTheorem}, assume also that $k$ is a $\Q$-algebra. There is a diagram of
dg $A$-modules of the form
$$
\Hoch(\Perfdg(\cA)) \xla{\sim} \bullet \xra{\sim} \bullet \xla{\sim} \bullet \xra{\sim} \R\Gamma_{\Nonreg(\cA)}(\Omega^\cdot_{A/k}, dw)
$$
such that
\begin{enumerate}
\item each map in the diagram is a quasi-isomorphism of dg $A$-modules,
  \item each map is natural for morphisms of curved algebra,
  \item each object in the diagram is equipped with a $k$-linear operator $B$ of homological degree $1$ and the maps commute with these operators,  and
  \item the operator $B$ on $\Hoch(\Perfdg(\cA))$ is the Connes operator and the operator $B$ on $\R\Gamma_{\Nonreg(\cA)}(\Omega^\cdot_{A/k}, dw)$ is induced from the de Rham
    differential.
  \end{enumerate}
\end{thm}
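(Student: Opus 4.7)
The plan is to apply $\R\Gamma_{\Nonreg(\cA)}$ term-by-term to the natural sequence of strict cdg maps \eqref{E417},
$$
\Hoch(\Perfdg(\cA)) \to \HochII(\Perfdg(\cA)) \to \HochII(\qPerfcdg(\cA)) \leftarrow \HochII(\cA) \xra{ch^{II}} (\Omega^\cdot_{A/k}, dw),
$$
and to observe that $\Hoch(\Perfdg(\cA))$ is already supported on $\Nonreg(\cA)$ (by Theorem \ref{thm1227}), so the canonical map $\R\Gamma_{\Nonreg(\cA)}\Hoch(\Perfdg(\cA)) \to \Hoch(\Perfdg(\cA))$ is a quasi-isomorphism. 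Taking as the three middle terms $\R\Gamma_{\Nonreg(\cA)}$ applied to $\Hoch(\Perfdg(\cA))$, $\HochII(\qPerfcdg(\cA))$, and $\HochII(\cA)$, and collapsing the first two cdg maps into a single composition, yields a zig-zag of precisely the shape $\Hoch(\Perfdg(\cA)) \xla{} \bullet \xra{} \bullet \xla{} \bullet \xra{} \R\Gamma_{\Nonreg(\cA)}(\Omega^\cdot_{A/k}, dw)$ required by the theorem.

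Each of the four resulting arrows is a quasi-isomorphism: the leftmost is the support statement above; the second is the composition of the canonical map $\R\Gamma_{\Nonreg(\cA)}\Hoch \to \R\Gamma_{\Nonreg(\cA)}\HochII$ on $\Perfdg(\cA)$ (a quasi-isomorphism by Proposition \ref{prop416}, essentially the content of the identification $HH \simeq \R\Gamma_{\Nonreg(\cA)} HH^{II}$) with the map induced by the pseudo-equivalence $\Perfdg(\cA)^e \hookrightarrow \qPerfcdg(\cA)^e$; the third arrow is the quasi-isomorphism induced by the pseudo-equivalence $(\cA^e)^{\op} \hookrightarrow \qPerfcdg(\cA)^e$; and the fourth is Efimov's Chern character (Proposition \ref{prop415b}). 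The Chern character step is the only one requiring $\Q \subseteq k$, since the formula for $ch^{II}$ involves the factors $\tfrac{1}{n!}$. Naturality of the zig-zag in strict morphisms of curved algebras is automatic, because every ingredient is functorial for such morphisms.

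The delicate point is the Connes operator. Because $B$ is only $k$-linear, not $A$-linear, the functor $\R\Gamma_{\Nonreg(\cA)}$ must be realized by a concrete model that is functorial for $k$-linear endomorphisms of degree one. The \v{C}ech-style model developed in the paragraphs preceding the theorem does exactly this: pick a finite list of homogeneous elements defining $\Nonreg(\cA)$ and form the totalization of the corresponding cubical diagram of localizations, using the quasi-isomorphism $\Hoch(\Perfdg(\cA))[1/g] \xra{\sim} \Hoch(\Perfdg(\cA[1/g]))$ (a consequence of Theorem \ref{thm1227} applied to both $\cA$ and $\cA[1/g]$) to identify this cubical totalization with local cohomology. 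With this model in hand, naturality of $B$ under strict cdg functors gives compatibility with $B$ along the first three arrows, while Efimov's \cite[3.14]{Ef} identifies $B$ with the de Rham differential along the Chern character. The main obstacle, as I see it, is lifting all four arrows and the operator $B$ to the \v{C}ech model coherently; once that bookkeeping is done, assertions (1)--(4) of the theorem follow directly from the quasi-isomorphisms and naturality statements already assembled.
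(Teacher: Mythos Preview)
Your proposal is correct and follows essentially the same route as the paper: apply $\R\Gamma_{\Nonreg(\cA)}$ (in its \v{C}ech-by-localizations incarnation) termwise to the sequence \eqref{E417}, use the support of $\Hoch(\Perfdg(\cA))$ on $\Nonreg(\cA)$ to append the quasi-isomorphism $\R\Gamma_{\Nonreg(\cA)}\Hoch(\Perfdg(\cA))\xra{\sim}\Hoch(\Perfdg(\cA))$, and invoke naturality of $B$ for strict cdg functors together with Efimov's identification of $B$ with the de Rham differential under $ch^{II}$. Your explicit bookkeeping---collapsing two consecutive rightward arrows to match the four-arrow shape in the statement---is the only thing the paper leaves implicit; otherwise the arguments coincide.
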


\section{When is $\mfdg(Q,f)$ smooth?} \label{sec:whensmooth}

For a dg category $C$, define $\cPerf(C^\op)$ to be the thick subcategory of $\cD(C^\op)$ (the derived category of right dg $C$-modules) generated by the representable modules.
We say a right dg $C$-module is {\em perfect} if it belongs to $\cPerf(C^\op)$.  As noted in \cite[\S 2.3]{Orlov}, a module is perfect if and only if it is isomorphic in the
derived category to a summand of a finite semi-free dg module and $\cPerf(C^\op)$ coincides with the subcategory of compact objects of $\cD(C^\op)$. 

Recall that for a $k$-linear dg category $C$, we define $C^e = C \otimes_k C^\op$ and
$\Deltaright = \Delta_C^\rght$ to be the right dg $C^e$-module given on objects by sending $(X, Y)$ to $\Hom_C(X, Y)$.

\begin{defn} \cite[3.23]{Orlov} A $k$-linear dg category $C$ is {\em homologically smooth (over $k$)} if the right dg $C^e$-module $\Deltaright$ is perfect.
\end{defn}

\begin{rem} There is a canonical isomorphism $(C^e)^\op \xra{\cong} C^e$ of dg category, which is the identity on objects
  and is given by $\a \otimes \b \mapsto (-1)^{|\a||\b|} \b \otimes \a$ on morphisms,
and under this isomorphism, $\Deltaright$ corresponds with $\Deltaleft$. 
It follows that $C$ is homologically smooth if and only if the left
dg $C^e$-module $\Deltaleft$ is perfect.
\end{rem}

Let us assume $k = F[t, t^{-1}]$ with $F$ an (ungraded) field and $t$ a degree two indeterminant, and recall that 
a dg $k$-module is the same thing as a $\Z/2$-graded complex of $F$
vector spaces, and more generally a $k$-linear dg category  is the same thing as a
$\Z/2$-graded dg category over $F$.  We say a $\Z/2$-graded dg category $C$ over $F$ is homologically smooth if $C$ is smooth over $k$.

Assume $Q$ is an essentially smooth $F$-algebra and $f \in Q$ a non-zero-divisor. Recall $\mfdg(Q,f)$ denotes the
$\Z/2$-graded dg category over $F$ of matrix factorizations of $f$; that is,
$\mfdg(Q,f) = \Perfdg(\cA)$ where we set $\cA = (Q[t, t^{-1}], ft)$.
Let $\Spec(Q) \xra{f} \A^1_F$ be the morphism of affine $F$-schemes induced by the map of $F$-algebras $F[x] \to Q$ sending $x$ to $f$.

\begin{thm} \label{smooththm}
  Assume $F$ is a perfect field, $Q$ is an essentially smooth $F$-algebra, and $f \in Q$ is a non-zero-divisor.
  The $\Z/2$-graded dg category $\mfdg(Q,f)$ is homologically smooth over $F$
  if and only if the origin in $\A^1_F$ is an isolated singular value of the morphism $\Spec(Q) \xra{f} \A^1_F$. In particular,
  $\mfdg(Q,f)$ is homologically smooth over $F$ whenever $\chr(F) = 0$. 
\end{thm}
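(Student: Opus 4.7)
The plan is to use the Morita equivalence of Theorem~\ref{thm47} together with the Hochschild computation in Corollary~\ref{IntroCor10} to translate homological smoothness of $C = \mfdg(Q,f) = \Perfdg(\cA)$ (with $\cA = (Q[t,t^{-1}], ft)$ over $k = F[t,t^{-1}]$) into a concrete finiteness condition on a twisted de Rham complex, and then analyze that condition geometrically.

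For the forward implication, if $C$ is homologically smooth then $\Deltaright$ lies in the thick closure of representable right $C^e$-modules $h^{(X,Y)}$, so $HH(C) = \Deltaright \Lotimes_{C^e} \Deltaleft$ lies in the thick closure in $\cD(k)$ of the hom complexes $\Hom_C(X,Y)$. Each of these is a bounded complex of finitely generated projective $Q[t,t^{-1}]$-modules and hence perfect over the regular ring $A = Q[t,t^{-1}]$, and since the thick-closure operations respect the natural $A$-action (via the center of $C$), the object $HH(C)$ itself is $A$-perfect and so has finitely generated cohomology as a $Q$-module. Combining Corollary~\ref{IntroCor10}(2) with the triangle $\R\Gamma_{V(f)}(M) \to M \to M[1/f]$ for $M = (\Omega^\cdot_{Q/F}, df)$ then forces the localized cohomology $N[1/f]$ to be finitely generated over $Q$, where $N = H^*(\Omega^\cdot_{Q/F}, df)$. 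A standard Nakayama argument shows this is equivalent to $\supp_Q(N/\Gamma_f N) \cap V(f) = \emptyset$; using that $\supp_Q(N) = \Sing(f) = V(J_f)$ (by the Jacobian criterion) and that every minimal prime of $J_f$ sits in $\operatorname{Ass}(N)$ (since the top piece of $N$ is essentially $Q/J_f$), this unwinds to the statement that every irreducible component of $\Sing(f)$ is either contained in $V(f)$ or disjoint from $V(f)$. A positive-dimensional component $V(\fq)$ meeting $V(f)$ on which $\overline{f}$ is non-constant always contributes (via the perfectness of $F$, which guarantees $\overline{f}$ is a non-unit in $Q/\fq$) an embedded prime $\fp \supsetneq \fq$ containing $f$ on which $f$ acts non-nilpotently on $N_\fp$; hence the dichotomy holds precisely when $f(\Sing(f))$ is finite, i.e.\ when the origin is an isolated singular value.

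For the reverse direction, the hypothesis gives a decomposition $\Sing(f) = (\Sing(f) \cap V(f)) \sqcup C_0$ with $C_0$ a finite union of components cut out by some $g = \prod_i(f - c_i)$, $c_i \neq 0$. By Briggs' theorem, every object of $\mfdg(Q,f)$ is supported on $\Nonreg(Q/f) = \Sing(f) \cap V(f) \subseteq D(g)$, from which I plan to deduce a quasi-equivalence $\mfdg(Q,f) \simeq \mfdg(Q[1/g], f)$. In this reduced setting, where all critical points of $f$ lie above the origin, the obstruction element $\alpha$ in the construction of $\tilde K$ in Section~\ref{sec:nochar} can be arranged to be a unit, so $\tilde K$ becomes a globally defined finite perfect curved $\cA^e$-module whose support (via $h^{\tilde K} \simeq h^A$ and the fact that $A$ is diagonally supported on $\Nonreg(\cA)$) is contained in $W = \Nonreg(\cA) \times_{\Spec k} \Nonreg(\cA)$; together with the Morita equivalence of Theorem~\ref{thm47}, this yields a finite perfect resolution of $\Deltaright$ over $C^e$. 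The main obstacle will be rigorously justifying both the quasi-equivalence $\mfdg(Q,f) \simeq \mfdg(Q[1/g], f)$ and the global construction of $\tilde K$ supported on $W$, each of which depends on a careful exploitation of Briggs' theorem. Finally, the characteristic-zero consequence follows immediately from generic smoothness: in characteristic zero, $f$ is smooth at a generic point of each irreducible component of $\Spec(Q)$, so $f(\Sing(f))$ is a proper closed subset of $\A^1_F$ and therefore finite, making the origin automatically an isolated singular value.
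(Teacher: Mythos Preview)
Your overall strategy matches the paper's: translate homological smoothness across the Morita equivalence $\phi: \mfdg(Q,f)^e \xra{\sim} \mfdg^W(Q^e,f^e)$, then for one direction exploit finiteness of Hochschild homology, and for the other build a perfect resolution after a localization step justified by Briggs' Theorem. The reduction $\mfdg(Q,f)\simeq\mfdg(Q[1/g],f)$ and the characteristic-zero coda are exactly what the paper does. However, both halves of your argument contain real gaps.

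\textbf{Reverse direction.} The claim that, after localizing so that $\Sing(f)\subseteq V(f)$, the element $\alpha$ from Section~\ref{sec:nochar} ``can be arranged to be a unit'' is unsupported and generally false. The construction of $\alpha$ is about whether a chosen lift $q:P\to I$ of $\Omega^1_{A/k}$ surjects onto the diagonal ideal $I\subset A^e$; this depends only on the diagonal embedding of $\Spec(A)$ and has nothing to do with the critical locus of $f$. For a general essentially smooth $Q$, the ideal of the diagonal need not be globally generated by $\operatorname{rank}\Omega^1_{Q/F}$ elements, so $\tilde K$ need not be a perfect curved $\cA^e$-module. The paper avoids this entirely: once one has reduced to $\Sing(f)=\Nonreg(Q/f)$, one checks $\Nonreg(Q^e/f^e)=W$, so $\mfdg^W(Q^e,f^e)=\mfdg(Q^e,f^e)$, and then simply resolves $Q$ by perfect curved $(Q^e,f^e)$-modules using regularity of $Q^e$ (as in the proof of Lemma~\ref{lem416}). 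This exhibits $h^Q$ as quasi-isomorphic to a representable module, hence perfect. You should replace the $\tilde K$ argument with this direct resolution.

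\textbf{Forward direction.} Your reduction to ``$HH(C)$ has finitely generated homology over $Q$'' is correct and matches the paper. But the geometric analysis that follows is both over-complicated and incorrect. The assertion ``the dichotomy holds precisely when $f(\Sing(f))$ is finite'' is false: a component of $\Sing(f)$ disjoint from $V(f)$ can map dominantly to $\A^1_F$, giving infinitely many singular values while the origin remains isolated. The detour through associated primes and embedded primes is unnecessary. What you actually need is the elementary fact (used verbatim in the paper) that for a complex $M$ with finitely generated homology supported on a closed set $Z$, the local cohomology $\R\Gamma_{V(f)}(M)$ has finitely generated homology if and only if $Z\cap V(f)$ is open and closed in $Z$. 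Applied with $Z=\Sing(f)=\supp(\Omega^\cdot_{Q/F},df)$, this is exactly the statement that the origin is an isolated singular value.
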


\begin{proof} 
  Since $F$ is perfect, we have $\Nonreg(Q/f) = \Sing_F(Q/f)$ and thus $\Nonreg(Q/f) = V(f) \cap \Sing(f)$.
  The origin in $\A^1_F$ is an isolated singular value of $\Spec(Q) \xra{f} \A^1_F$
if and only if $\Nonreg(Q/f)$ is an open and closed subset of $\Sing(f)$.

Let us write $\LFdg(Q,f)$ for $\Moddg(\cA)$. Its objects are ``linear factorizations'' of $f$; that is, $\Z/2$-graded $Q$-modules (not necessarily finitely generated nor projective) equipped
with odd degree $Q$-linear endomorphisms that square to multiplication by $f$.

Set $Q^e = Q \otimes_F Q$ and $f^e = f \otimes 1 - 1 \otimes f$  and $W = \Nonreg(Q/f) \times_{\Spec F} \Nonreg(Q/f) \subseteq \Spec(Q^e)$.
Recall from Theorem \ref{thm47} that we have a Morita equivalence
  $$
  \phi: \mfdg(Q,f)^e \into \mfdg^W(Q^e, f^e)
  $$
  of $\Z/2$-graded dg categories, so that 
  extension and restriction of scalars along $\phi$ determines an equivalence of triangulated categories
  $\cD(\mfdg(Q,f)^e) \cong \cD(\mfdg^W(Q^e, f^e))$.
  This equivalence restricts to one on compact objects, and thus
  $$
  \cPerf(\mfdg(Q, f)^e) \cong \cPerf(\mfdg^W(Q^e, f^e)).
  $$
  Moreover, by \eqref{E1215}, the dg $\mfdg(Q,f)^e$-module  $\Deltaright$ corresponds to the dg $\mfdg^W(Q^e, f^e)$-module $h^Q$.
  (Recall we  view $Q$ as object of $\LFdg(Q^e, f^e)$ with trivial differential and $h^Q = \Hom_{\mfdg^W(Q^e, f^e)}(-, Q)$.
  Beware that $h^Q$ is not representable as a module on $\mfdg^W(Q^e, f^e)$.) 
It thus remains to prove 
  \begin{quote}
    $h^Q$ is a perfect dg $\mfdg^W(Q^e, f^e)$-module 
if and only if $\Nonreg(Q/f)$ is an open and closed subset of $\Sing(f)$. 
\end{quote}

Assume that $\Nonreg(Q/f)$ is an open and closed subset of $\Sing(f)$, so that
$\Sing(f)$ decomposes as a disjoint union $\Nonreg(Q/f) \amalg V$ of topological spaces. In other words,
there is a $g \in Q$ such that $V(g) \supseteq V$ and $V(g) \cap \Nonreg(Q/f) = \emptyset$.
Since all objects of $\mfdg(Q, f)$ are supported on $\Nonreg(Q/f)$, the canonical dg functor   $\mfdg(Q, f) \to \mfdg(Q[1/g], f)$
is fully-faithful, and it is essentially onto by Briggs'
Theorem \ref{body_thm_curved_regular}. Thus, it is a quasi-equivalence of $\Z/2$-graded dg categories and so, without loss of generality, 
we may thus assume that $\Sing(f) = \Nonreg(Q/f)$; i.e., that the origin is the only singular value of $\Spec(Q) \xra{f} \A^1_F$. In this case, 
we have
$$
\Nonreg(Q^e/f^e) = \Sing_F(Q^e/f^e) = V(f^e) \cap \Sing(f) \times_{\Spec(F)} \Sing(f) = \Sing_F(Q/f) \times_{\Spec F} \Sing_F(Q/f) = W
$$
so that
$$
\mfdg^W(Q^e, f^e) = \mfdg(Q^e, f^e). 
$$

As in the proof of Lemma \ref{lem416}, since $Q^e$ is regular and $Q$ is finitely generated, we can construct
an exact sequence $0 \to P_m \to \cdots \to P_0 \to Q \to 0$ with $P_i \in \mfdg(Q^e, f^e)$ for each $i$.
Setting $P = \Tot(0 \to P_m \to \cdots \to P_0 \to 0)$, the induced morphism $h^P \xra{\sim} h^Q$
of dg $\mfdg(Q^e, f^e)$-modules is a quasi-isomorphism. 
In particular, $h^Q$ is isomorphic in the derived category to a representable module, and hence is perfect.

Now assume $h^Q$ is a perfect dg $\mfdg^W(Q^e, f^e))$-module.
We claim that the homology of $\Z/2$-graded complex 
$\R \G_{V(f)}(\Omega^{\Z/2}_{Q/k}, df)$ is finitely generated as a $Q$-module.
Since the support of $(\Omega^{\Z/2}_{Q/k}, df)$ is equal to $\Sing(f: \Spec(Q) \to\A^1_F)$, we have that 
$\R \G_{V(f)}(\Omega^{\Z/2}_{Q/k}, df)$ has finitely generated homology if and only if $V(f) \cap \Sing(f)$ is an open and closed subset of $\Sing(f)$.
So, this claim will complete the proof of the theorem.

By Theorems \ref{thm711b} and \ref{thm47} (see also Corollary \ref{CorPerfectField}),  we  have an isomorphism
\begin{equation} \label{E1215c}
h^Q \Lotimes_{\mfdg^W(Q^e, f^e)}  t^Q \cong \R \G_{V(f)}(\Omega^{\Z/2}_{Q/k}, df)
\end{equation}
in the derived category of $\Z/2$-graded chain complexes of $Q$-modules.
Since we assume $h^Q$ is perfect, it is isomorphic in the derived category to a finite semi-free
dg $\mfdg^W(Q^e, f^e)$-module $G$. The isomorphism \eqref{E1215c} thus gives that
$\R \G_{V(f)}(\Omega^{\Z/2}_{Q/k}, df)$ is a summand in $\cD(k)$ of
$$
G  \Lotimes_{\mfdg^W(Q^e, f^e)}  t^Q
\cong t^Q(X) = 
G  \otimes_{\mfdg^W(Q^e, f^e)}  t^Q
$$
It therefore suffices to show the homology $G  \otimes_{\mfdg^W(Q^e, f^e)}  t^Q$ is finitely generated as a $Q$-module. 

Say $0 = G_{-1} \subseteq G_0 \subseteq \cdots \subseteq G_m = G$ is a bounded filtration with $G_i/G_{i+1}$ a finite free dg $\mfdg^W(Q^e, f^e)$-module
for each $i$. For $X \in \mfdg^W(Q^e, f^e)$ by \eqref{E1029a} we have
$$
h^X  \otimes_{\mfdg^W(Q^e, f^e)}  t^Q \cong Q \otimes_{Q^e} X,
$$
which clearly has finitely generated homology. It follows that the homology of 
$G_i/G_{i-1}  \otimes_{\mfdg^W(Q^e, f^e)}  t^Q$
is finitely generated for each $i$ and, by induction, that the homology of 
$G  \otimes_{\mfdg^W(Q^e, f^e)}  t^Q$ is finitely generated. 
\end{proof}

%\bibliographystyle{amsalpha}
%\bibliography{mergedbib}

\appendix

\section{Thick subcategories of categories of curved dg modules} \label{appendix}

%I defined a curved ring $\cA=(A,w)$  to be regular if $w$ is a non-zero-divisor and $A/w$ is regular. And I phrased the theorems in terms of the non-regular locus $\Nonreg^*(\cA)$ of $\cA$, instead of the $\Nonreg^*(A/w)$. They are almost the same but $\Nonreg^*(\cA)$ seems more natural because there are caveats needed if using $\Nonreg^*(A/w)$ (you need to assume $w$ is not zero on any component of $A$ if using the latter).

% \item In the last example on complete intersections there should be some regrading which I have swept under the rug.

A famous result of Hopkins \cite{Hopkins} and Neeman \cite{NeemanChromatic} classifies thick subcategories of the perfect derived category of a noetherian ring in terms of their prime support. In this appendix we prove that the analogue for curved dg modules holds by reducing to the case of ordinary dg modules.% {\color{red} We will always take $A$ to be a graded commutative noetherian ring concentrated in even degrees. }% is this needed here? I will probably always put it as a hypothesis when important

\subsection{Recollections on dg modules}\label{sec_A_dgmod} If $A$ is a graded ring, we write $\Moddg(A)$ for the dg category of all dg $A$-modules. The complex of maps between two objects $X$ and $Y$ is denoted $\Hom_A(X,Y)$. The associated homotopy category $[\Moddg(A)]$ has the same objects and hom sets
\[
\Hom_{[\Moddg(A)]}(X,Y) \coloneqq H^0 (\Hom_A(X,Y)).
\]
The {\em derived category} $\cD(A)$ of all dg $A$-modules is defined as the Verdier localization of $[\Moddg(A)]$ that inverts all quasi-isomorphisms. 

We write $\projdg(A)$ for the full dg subcategory of $\Moddg(A)$ consisting of dg modules that are finitely generated and projective as graded $A$-modules. The corresponding homotopy category of finitely generated projective dg modules is written $[\projdg(A)]$.

A dg $A$-module $X$ having the property  that $\Hom_{A}(X, -)$ preserves all quasi-isomorphisms is called {\em homotopically projective}. A dg $A$-module is called {\em perfect} if it is homotopically projective and in $\projdg(A)$. We write $\Perfdg(A)$ for the full dg subcategory of $\Moddg(A)$ consisting of perfect dg $A$-modules.

When $X$ is homotopically projective, the canonical map $H^0(\Hom_{A}(X, Y)) \to \Hom_{\cD(A)}(X, Y)$ is an isomorphism for all dg modules $Y$ by \cite[Proposition 2.3.3]{Ver}. In particular, we may regard $[\Perfdg(A)]$ as a full subcategory of $\cD(A)$.

A full subcategory of a triangulated category $\cT$ is  {\em thick} if it is triangulated and closed under summands. The smallest thick subcategory of $\cT$ that contains an object $X$ is denoted $\Thick_\cT(X)$. We will use the short-hand notation $Y \models_\cT X$ to mean  that $X$ is in $\Thick_\cT(Y)$.

\begin{rem}\label{rem_idem_complete} 
As defined here, the homotopy category $[\Perfdg(A)]$ may not be idempotent complete. The embedding $[\Perfdg(A)] \into  \Thick_{\cD(A)}(A)$ realizes $\Thick_{\cD(A)}(A)$ as the idempotent completion of $[\Perfdg(A)]$, and in particular the two dg categories are Morita equivalent.

An analogous phenomenon was discovered in homotopy theory by Wall, who used obstructions in $K$-theory to  show that there are spaces that are, up to homotopy, retracts of finite CW-complexes, but that are not themselves homotopy equivalent to finite CW-complexes \cite{Wall}.

Taking inspiration from this, it follows from Thomason's theorem \cite{Thom} that $[\Perfdg(A)]$ is idempotent complete if and only if the map $K_0([\Perfdg(A)])\to   K_0(\Thick_{\cD(A)}(A))\eqqcolon K_0(A)$ is an isomorphism; this happens exactly when $K_0(A)$ is generated by the classes of projective graded $A$-modules. For example, if $A=A^{\leqslant 0}$ then there is a canonical isomorphism $K_0(A)\cong K_0(A_0)$ by \cite[Lemma 2.4]{LaTa}, and so for these graded rings $[\Perfdg(A)]$ is idempotent complete.
\end{rem}

\subsection{Support for dg modules}

When $A$ is a graded commutative ring, the set of homogeneous prime ideals of $A$ is denoted $\Spec^*(A)$. %Explain topology? Refer to Carlson Iyengar for information.
If $\fp\in \Spec^*(A)$ then the graded localization $A_{(\fp)}$ is the result of inverting all homogeneous elements of $A \smallsetminus \fp$. %Note that $A_{(\fp)}$ is a graded local ring (it has a unique homogeneous maximal ideal).
If $X$ is a graded $A$-module then $X_{(\fp)}$ is the graded $A_{(\fp)}$-module $X\otimes_A A_{(\fp)}$, and we define the graded support of $X$ to be
\[
    \supp^*_A(X) \coloneqq \{\fp \in \Spec^*(A) ~\mid~ X_{(\fp)} \ne 0\text{ in }\cD(X)\}.
\]

Extending this slightly, if $X$ is a dg $A$-module then $\supp^*(X)= \supp^*_A(H(X))$, or equivalently, the set of homogeneous primes $\fp$ such that $X_{(\fp)}$ is not isomorphic to zero in $\cD(A)$.

%Carlson Iyengar write $M_{(\fp)}$ for the graded localisation

The next result, due to Carlson and Iyengar, is an analogue of Hopkins and Neeman's theorem for dg modules.
%A famous theorem of Hopkins and Neeman uses prime support to classify thick subcategories of the perfect derived category of a commutative noetherian ring. In the graded case, you might want to look at the the perfect derived category of differential graded (dg) modules. Carlson and Iyengar classified the thick subcategories here, in about the same way, but they needed to assume  their ring is evenly graded or of characteristic two.

\begin{thm}[\cite{CI}]\label{HopkinsTheorem} Let $A$ be  a graded commutative noetherian ring. If $X$ and  $Y$ are in $\Thick_{\cD(A)}(A)$ and $\supp^*_A(X) \subseteq \supp^*_A(Y)$, then  $X$ is in $ \Thick_{\cD(A)}(Y)$.
\end{thm}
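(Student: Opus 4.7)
The strategy is to establish the classification of thick subcategories of $\cC := \Thick_{\cD(A)}(A)$ by specialization-closed subsets of $\Spec^*(A)$: the maps $\cT \mapsto \sigma(\cT) := \bigcup_{Z \in \cT} \supp^*_A(Z)$ and $S \mapsto \theta(S) := \{Z \in \cC : \supp^*_A(Z) \subseteq S\}$ are mutually inverse order-preserving bijections between thick subcategories of $\cC$ and specialization-closed subsets of $\Spec^*(A)$.  The theorem is then immediate: one checks that $\sigma(\Thick_{\cD(A)}(Y)) = \supp^*_A(Y)$, since $\supp^*$ is monotone under triangles, shifts, and summands, and $Y$ is itself in $\Thick_{\cD(A)}(Y)$.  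Hence $\Thick_{\cD(A)}(Y) = \theta(\supp^*_A(Y))$, and the hypothesis $\supp^*_A(X) \subseteq \supp^*_A(Y)$ places $X$ in this set.

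\textbf{Key tools.}  Two inputs do the bulk of the work.  (1)  For each $\fp \in \Spec^*(A)$, fix homogeneous generators $g_1,\dots,g_n$ of $\fp$ and form the dg $A$-module $K(\fp) := \Kos(g_1,\dots,g_n) \in \cC$; a graded-local computation shows $\supp^*_A(K(\fp)) = V^*(\fp) := \{\fq \in \Spec^*(A) : \fq \supseteq \fp\}$.  (2)  The tensor product theorem for $\cC$: $\supp^*_A(Z \Lotimes_A W) = \supp^*_A(Z) \cap \supp^*_A(W)$ for $Z, W \in \cC$, proved by graded localization at each homogeneous prime and reduction to finitely generated free complexes over graded local rings via the graded Nakayama lemma.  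Since $A$ is noetherian, every specialization-closed subset of $\Spec^*(A)$ is a union of closed sets $V^*(\fp)$, so (1) implies that $\sigma$ surjects onto specialization-closed subsets, and the containments $\sigma \theta(S) \subseteq S$ and $\cT \subseteq \theta \sigma(\cT)$ are formal from the definitions.

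\textbf{Main step and obstacle.}  The non-formal content is $\theta \sigma(\cT) \subseteq \cT$: if $X \in \cC$ satisfies $\supp^*_A(X) \subseteq \sigma(\cT)$, then $X \in \cT$.  The plan is noetherian induction on the finite set $\supp^*_A(X)$.  Fix a prime $\fp \in \supp^*_A(X)$ that is minimal under specialization.  By hypothesis some $Y' \in \cT$ has $\fp \in \supp^*_A(Y')$.  By the tensor product theorem, $Y' \Lotimes_A K(\fp)$ lies in $\cT$ with support $\{\fp\}$, while $X \Lotimes_A K(\fp)$ has support $\{\fp\}$ by minimality of $\fp$.  The hard part is the local statement that any two nonzero objects of $\cC$ supported on $\{\fp\}$ generate the same thick subcategory; this is the main obstacle.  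My plan is to prove it by adapting the classical Hopkins--Neeman argument, exhibiting finite filtrations of each object by shifts of a common ``residue-field'' Koszul object on a system of parameters of $A_{(\fp)}/\fp A_{(\fp)}$ lifted back to $A$.  Granted the local statement, $X \Lotimes_A K(\fp) \in \cT$, and the cofiber sequence attached to the canonical map $K(\fp) \to A$ expresses $X$ as an extension of $X \Lotimes_A K(\fp)$ by an object whose support is strictly smaller than $\supp^*_A(X)$; the latter lies in $\cT$ by the inductive hypothesis, closing the induction.

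\textbf{Subtleties.}  Two points deserve care.  First, all of the commutative algebra must be executed in the graded setting (graded Nakayama, graded noetherianity, existence of homogeneous systems of parameters for graded local rings); this is precisely why $\Spec^*(A)$ and graded supports appear in the statement.  Second, summand-closure is essential for the local filtration argument in the hard step, which is why the theorem is phrased for $\cC$ rather than for $[\Perfdg(A)]$; cf.\ Remark \ref{rem_idem_complete}.
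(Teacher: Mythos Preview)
Your route differs from the paper's: rather than redoing Hopkins--Neeman in the graded setting, the paper introduces a ``grade'' functor $G:\cD(A)\to\cD(\grMod(A))$ and a ``fold'' functor $F$ going back, applies the Dell'Ambrogio--Stevenson classification of localizing subcategories of $\cD(\grMod(A))$ to $G(X)$ and $G(Y)$, folds, and invokes Neeman's lemma to descend from localizing to thick. Your direct strategy is legitimate in principle, but the induction step as written fails. You take $\fp$ a closed point of $\supp^*_A(X)$ and assert that the cofiber $C$ of ``the canonical map $K(\fp)\to A$'' has $\fp\notin\supp^*_A(C)$, so that $\supp^*_A(X\otimes C)\subsetneq\supp^*_A(X)$. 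No such map exists: for any morphism between $K(\fp)$ and $A$, localizing at $\fp$ compares $A_{(\fp)}$ with the Koszul complex on generators of the maximal ideal, and these are never quasi-isomorphic (the latter has residue field homology in the extremal degree). Hence the cone is nonzero at $\fp$ and the support does not shrink.

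The missing mechanism is tensor nilpotence, not a one-prime-at-a-time cofiber. The standard direct argument forms the evaluation $\varepsilon:Y^{\vee}\otimes Y\to A$ with cofiber $C$ and connecting map $\xi:A\to C$. For $\fp\in\supp^*_A(Y)$ one has $Y\otimes\kappa(\fp)\neq 0$, hence $\varepsilon\otimes\kappa(\fp)$ is split surjective over the graded field $\kappa(\fp)$, so $\xi\otimes\kappa(\fp)=0$; since $\supp^*_A(X)\subseteq\supp^*_A(Y)$ this gives $\xi\otimes X\otimes\kappa(\fp)=0$ for all $\fp$. The graded tensor nilpotence theorem then yields $\xi^{\otimes N}\otimes X=0$ for some $N$, whence $X$ is a summand of the fiber of $X\to X\otimes C^{\otimes N}$, which is built from iterated tensors with $Y^{\vee}\otimes Y$ and therefore lies in $\Thick(Y)$. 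Your ``local statement'' is subsumed in this argument (it is essentially the base change to $\kappa(\fp)$), but it does not by itself close the induction you sketched.
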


In \cite{CI} $A$ is required to be concentrated in even degrees or to satisfy $2A$=0; this doesn't seem to be necessary in their argument except to guarantee that $A$ is commutative in the ordinary sense.

For the sake of novelty we give a different proof of Theorem \ref{HopkinsTheorem}, explaining how it can be quickly deduced from an earlier result of Dell’Ambrogio and Stevenson on the graded derived category \cite{DS}.%{\color{red}Along the way we will see how to remove the grading assumption on $A$.

\begin{proof}
We work with the abelian category $\operatorname{grMod}(A)$ of graded $A$-modules, and we denote its derived category by  $\cD(\operatorname{grMod}(A))$. As an excuse to fix our grading notation: a complex $C$ of graded modules has a differential $d_C^{i,n} \colon C^{i,n}\to C^{i,n+1}$; it has a  suspension $\Sigma^nC$ with $(\Sigma^nC)^{i,m}=C^{i,m+n}$ and $d_{\Sigma^n C}^{i,m}=(-1)^nd_C^{i,m+n}$; and it has a twist $C(j)$ with $C(j)^{i,n}=C^{i+j,n}$ and $d_{C(j)}^{i,n}=d_C^{i+j,n}$.

We need to set up two functors:
\[
  \xymatrix{
        \cD(A) \ar@<1mm>[r]^(.35){G} &\cD(\operatorname{grMod}(A)).\ar@<1mm>[l]^(.65){F}
    }
\]

The functor $G$ (for grade) is defined by 
\[
G(X)^{i,n}=X^{i+n} \quad\text{with}\quad d_{G(X)}^{i,n}=(-1)^i d_X^{i+n} \colon X^{i+n} \to X^{i+n+1}.
\]
The sign is needed to make $G(X)$ a complex of graded $A$-modules. %(i.e.\ the differential should be $A$-linear of degree zero, satisfying $d(am)=ad(m)$,  with no sign). %You can think of this sign as coming from the suspension, because $G(X)=\bigoplus_i \Sigma^iX$.

The functor $F$ (for fold) is defined by 
\[
F(C)^m=\bigoplus_{i+n=m}C^{i,n} \quad\text{with}\quad d^n_{F(C)}={\textstyle\sum_{i+n=m}}[(-1)^i d^{i,n}_C\colon C^{i,n}\to C^{i,n+1}].
\]
Again, the sign is needed to make $F(C)$ a dg module. %following the Koszul sign rule.
We note that $F(C(i))=\Sigma^iF(C)$.  

Both $F$ and $G$ preserve quasi-isomorphisms and so induce well-defined functors on the corresponding derived categories. 

The functor $G$ preserves support, in the sense that $\supp_A(G(X))\subseteq \supp_A(G(Y))$. By Dell’Ambrogio and Stevenson's theorem \cite[Theorem 5.8]{DS} $G(X)$ is in the localising subcategory generated by $G(Y)(i)$ for $i\in \mathbb{Z}$. (We note that the results of \cite{DS} are stated for \emph{small support}, but since $X$ and $Y$ have finitely generated homology, the support sets of $G(X)$ and $G(Y)$ are the same as their small support sets.) 

The functor $F$ preserves coproducts, and it follows that $FG(X)= X^{\oplus{\mathbb{Z}}}$ is in the localising subcategory generated by $F(G(Y)(i))=\Sigma^iFG(Y)=\Sigma^i(Y^{\oplus{\mathbb{Z}}})$, for $i\in \mathbb{Z}$. This implies that $X$ is in the localising subcategory generated by $Y$. Since $X$ and $Y$ are compact objects of $\cD(A)$, Neeman's lemma \cite[Lemma 2.2]{NeemanKtheory} implies that  $X$ is in $ \Thick_{\cD(A)}(Y)$.
\end{proof}

%\subsection{The homotopy category of projective dg modules} 

%When $A$ is regular every dg $A$-module $P$ that is finitely generated and projective as a graded $A$-module is automatically homotopically projective; see \ref{??}. Therefore in the regular case this condition is redundant in Definition \ref{perf_def_appendix};

\subsection{Curved dg modules}  A curved ring $\cA=(A,w)$ is a graded ring $A$ equipped with a central element $w\in A^2$. A curved dg $\cA$-module is a graded $A$-module $X$ equipped with a degree one differential $d\colon X\to X$ that is $A$-linear in the sense that $d(ax)=(-1)^{|a|}ad(x)$ when $a\in A$ and $x\in X$, and such that $d^2$ is multiplication by $w$.

If $X$ and $Y$ are curved dg $\cA$-modules then the set of $A$-linear maps $\Hom_A(X,Y)$ is a complex, given its usual differential. This makes the collection of all curved dg $\cA$-modules into a pretriangulated dg category, denoted $\Moddg(\cA)$, with homotopy category $[\Moddg(\cA)]$.

As in Section \ref{sec_A_dgmod}, we write $\projdg(\cA)$ for the full dg subcategory of $\Moddg(\cA)$ consisting of curve  dg modules that are finitely generated and projective as graded $A$-modules. The corresponding homotopy category is written $[\projdg(\cA)]$.

\begin{rem} The homotopy category of (not necessarily finitely generated) projective dg modules has been studied in a number of contexts \cite{NeemanFlat,IyengarKrause}, including under the names contraderived category, derived category of the second kind, and unseparated derived category, and including in the curved setting; see the references in \cite{PositselskiStovicek}. In general it seems best to view $[\projdg(\cA)]$ within this context, however we will see that in the regular context it is reasonable to view $[\projdg(\cA)]$ as an analogue of the perfect derived category.
\end{rem}

Suppose that $A$ is graded commutative. Then $\Hom_A(X,Y)$ is further a dg $A$-module whenever $X$ and $Y$ are curved dg $\cA$-modules. In this case the support of a curved dg $\cA$-module $X$ is defined to be the support of its endomorphism dg module:
\[
\supp^*_A(X) \coloneqq \supp_A^*(\Hom_A(X,X)).
\]
%Possibly should be
%\[
%\supp^*_{(A,w)}(X) \coloneqq \supp_A^*(\Hom_A(X,X))\cap \cV(w).
%\]
%This is to get rid of the weird characteristic two examples. It is not needed if $A^{odd}\otimes Z_2=0$. Actually no it was right before. 
In a similar vein, $X$ is a curved dg $A$-module (with zero curvature) and $Y$ is a dg $\cA$-module, then the tensor product $X\otimes_AY$ is a curved dg $\cA$-module, given its usual differential.
%Moreover, these Hom and tensor constructions extend to dg functors in both arguments.

\begin{thm} \label{BriggsTheorem}
  Let $\cA = (A, w)$ be a curved ring such that $A$ is noetherian and graded commutative. 
Assume $X, Y \in \projdg(\cA)$ are such that each of the dg $A$-modules $\Hom_A(X,X)$, $\Hom_A(Y,Y)$ and $\Hom_A(Y,X)$ are perfect. 
If $\supp^*_A(X) \subseteq \supp^*_A(Y)$ then  $X $ is in $ \Thick_{[\projdg(\cA)]}(Y)$.
\end{thm}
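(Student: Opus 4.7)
The plan is to reduce Theorem \ref{BriggsTheorem} to the ordinary-dg case given by Theorem \ref{HopkinsTheorem}. The reduction rests on the observation that the hom complex $\Hom_A(X, Y)$ between two curved dg $\cA$-modules sharing the same curvature $w$ is an \emph{ordinary} (uncurved) dg $A$-module, since its differential squares to $[w, -]$, which vanishes because $w$ is central in $A$. I would therefore set $B := \End_A(Y) = \Hom_A(Y,Y)$, an ordinary dg $A$-algebra, and $M := \Hom_A(Y, X)$, an ordinary right dg $B$-module with the $B$-action by precomposition.

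First I would verify the hypotheses of Theorem \ref{HopkinsTheorem} for the pair $(M, B)$ in $\cD(A)$. Both are perfect dg $A$-modules by assumption. Since $Y$ is finitely generated projective as a graded $A$-module, graded localization commutes with $\Hom_A(Y, -)$, yielding $M_{(\fp)} \cong \Hom_{A_{(\fp)}}(Y_{(\fp)}, X_{(\fp)})$; the support hypothesis $\supp^*_A(X) \subseteq \supp^*_A(Y)$ then gives $\supp^*_A(M) \subseteq \supp^*_A(X) \subseteq \supp^*_A(Y) = \supp^*_A(B)$. Theorem \ref{HopkinsTheorem} produces $M \in \Thick_{\cD(A)}(B)$.

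The next step would be to upgrade this containment to $M \in \Thick_{\cD(B^{\op})}(B) = [\Perfdg(B^{\op})]$, i.e., to show $M$ is \emph{perfect as a right $B$-module}. Once this is achieved, I would exploit the Morita equivalence arising from the inclusion of the one-object dg subcategory $\{Y\} \hookrightarrow \projdg(\cA)$: this identifies $\Thick_{[\projdg(\cA)]}(Y)$ (up to idempotent completion) with $[\Perfdg(B^{\op})]$, with $Y$ corresponding to $B$ under the functor $Z \mapsto \Hom_A(Y, Z)$ and inverse $N \mapsto N \otimes^{\bL}_B Y$. Setting $X' := M \otimes^{\bL}_B Y \in \Thick_{[\projdg(\cA)]}(Y)$, the evaluation map $\epsilon : X' \to X$ is my candidate quasi-isomorphism. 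By the triangle identities it induces an isomorphism after applying $\Hom_A(Y, -)$, so its cone $C$ satisfies $\Hom_A(Y, C) \simeq 0$; combined with $\supp^*_A(C) \subseteq \supp^*_A(Y)$ (from the hypothesis) and the fact that curved modules over $\cA_{(\fp)}$ are contractible whenever $w \notin \fp$ (so $\supp \subseteq V(w)$), a Nakayama-style local argument at a minimal point of $\supp^*_A(C)$ forces $C = 0$, giving $X \in \Thick_{[\projdg(\cA)]}(Y)$.

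The hard part will be the upgrade from $M \in \Thick_{\cD(A)}(B)$ to $M \in [\Perfdg(B^{\op})]$. This is not automatic: for $A = k$ and $B = k[x]/(x^2)$ the $B$-module $k$ is perfect over $A$ but has infinite projective dimension over $B$, so Carlson--Iyengar alone cannot give the conclusion. The upgrade must exploit extra structure on $M$ that is invisible to the forgetful functor $\cD(B^{\op}) \to \cD(A)$ — most naturally the bimodule action of $\End_A(X)$ on the left (which is perfect over $A$ by hypothesis), or equivalently, by working throughout with the two-object dg subcategory $\cC := \{Y, X\} \subseteq \projdg(\cA)$ and establishing a Hopkins--Neeman-type classification directly for $\cD(\cC^{\op})$, reducing to the commutative case via the central action of $A$ on the Yoneda modules $h_Y$ and $h_X$.
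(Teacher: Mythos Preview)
Your proposal correctly diagnoses its own gap: the upgrade from $M \in \Thick_{\cD(A)}(B)$ to $M$ perfect over $B^{\op}$ is genuinely hard (your $k[x]/(x^2)$ example shows it is false without extra input), and the vague plan to exploit the $\End_A(X)$-action or to prove a Hopkins--Neeman theorem for the two-object category $\{X,Y\}$ is not yet a proof. The subsequent steps are also circular: $X' = M \otimes^{\bL}_B Y$ lies in $\projdg(\cA)$ only \emph{after} you know $M$ is perfect over $B$, so the cone argument cannot even be set up before the hard step is done. (Incidentally, your claim that support is contained in $\cV(w)$ is false without an evenness hypothesis on $A$; see the remark after Lemma~\ref{applem2}.)

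The paper's proof sidesteps this entirely by never leaving the commutative world. The key device is the exact functor
\[
-\otimes_A Z \colon [\Perfdg(A)] \longrightarrow [\projdg(\cA)]
\]
for any fixed $Z \in \projdg(\cA)$, which transports $\models$-relations directly. One applies Hopkins--Neeman twice in $\cD(A)$: first $A \models \Hom_A(Y,X)$ (just perfection of $\Hom_A(Y,X)$), then $\Hom_A(Y,Y) \models \Hom_A(X,X)$ (the support hypothesis). Tensoring the first relation with $Y$ gives $Y \models \Hom_A(Y,X)\otimes_A Y$; tensoring the second with $X$ gives $\Hom_A(Y,Y)\otimes_A X \models \Hom_A(X,X)\otimes_A X$. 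The chain is closed by the isomorphism $\Hom_A(Y,X)\otimes_A Y \cong \Hom_A(Y,Y)\otimes_A X$ (valid because $Y$ is finitely generated projective) and the observation that $X$ is a retract of $\Hom_A(X,X)\otimes_A X$ via $x \mapsto \id\otimes x$ and evaluation. No Morita theory, no noncommutative thick-subcategory classification, is needed.
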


\begin{proof}
Taking $X$ and $Y$ as stated, we have
\[
\supp^*_A(\Hom_A(X,X)) \subseteq \supp^*_A(\Hom_A(Y,Y))
\]
by definition. Since $\Hom_A(X,X)$ and $\Hom_A(Y,Y)$ are perfect dg $A$-modules, they in particular lie in $\Thick_{\cD(A)}(A)$, so from Theorem \ref{HopkinsTheorem} we obtain
     \begin{equation}\label{app_eqn_build}
         \Hom_A(Y,Y)\models_{\cD(A)}\Hom_A(X,X).
     \end{equation}
We wish to deduce that $Y\models_{[\projdg(\cA)]}X$.

By assumption we have $A \models_{\cD(A)} \Hom_A(X,Y)$. Since both $A$ and $\Hom_A(X,Y)$ lie in $[\Perfdg(A)]$, an application of \cite[Proposition 2.3.1]{Ver} yields $A \models_{[\Perfdg(A)]} \Hom_A(X,Y)$ (despite the fact that $[\Perfdg(A)]$ as defined in \ref{sec_A_dgmod} may not be idempotent complete in $\cD(A)$).

Applying the exact functor $-\otimes_A Y\colon [\Perfdg(A)]\to [\projdg(\cA)]$ yields
\[
Y \models_{[\projdg(\cA)]} \Hom_A(Y, X) \otimes_A Y \cong \Hom_A(Y,Y)\otimes_A X.
\]
%\begin{equation} \label{E816a}
%X \models_\cT \End_{\Perfdg(\cA)}(X) \otimes_A Y.
%\end{equation}

We similarly use \cite[Proposition 2.3.1]{Ver} on (\ref{app_eqn_build}), and then apply $-\otimes_A X\colon [\Perfdg(A)]\to [\projdg(\cA)]$ to obtain
\[
 \Hom_A(Y,Y)\otimes_A X \models_{[\projdg(\cA)]}  \Hom_A(X,X)\otimes_A X.
\]
Finally, $X$ is a summand of $\Hom_A(X,X)\otimes_A X$ in $[\Perfdg(\cA)]$; indeed, the composition of the map $X \to \Hom_A(X,X) \otimes_A X$, $x\mapsto \id \otimes x$ with the map $\Hom_A(X,X) \otimes_A X\to X$, $\theta \otimes x \mapsto \theta(x)$ is the identity. Combining this with the already obtained inclusions of thick subcategories yields the desired result $Y\models_{[\projdg(\cA)]}X$.
\end{proof}

\subsection{Curved dg modules over regular rings} 
A graded commutative ring $A$ is \emph{regular} if it is noetherian and for every homogeneous prime $\fp\in \Spec^*(A)$, the graded local ring $A_{(\fp)}$ has finite global dimension (i.e.\ all of its graded modules have finite projective dimension). The non-regular locus of a graded ring $A$ is the set $\Nonreg^*(A)$ of homogeneous primes $\fp$ such that $A_{(\fp)}$ is not a regular ring. We note that if $A$ is local and regular then $A$ must either be evenly graded or satisfy $2A=0$. The next lemma is well known; it explains why curved dg modules over regular rings are well behaved.

\begin{lem}\label{lem_perf_curved}
    Let $A$ be a regular graded commutative ring. If  $X$ is a dg $A$-module that is finitely generated and projective as a graded $A$-module, then $X$ is perfect.

    Let $\cA = (A, w)$ be a curved ring such that $A$ is graded commutative and regular. If $X$ and $Y$ are curved dg $\cA$-modules that are finitely generated and projective as a graded $A$-modules, then $\Hom_A(X,Y)$ is a perfect dg $A$-module.
\end{lem}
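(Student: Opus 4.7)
The plan is to first observe that the second assertion follows from the first: the underlying graded $A$-module $\Hom_A(X,Y)^\nat \cong \Hom_{A^\nat}(X^\nat, A) \otimes_{A^\nat} Y^\nat$ is finitely generated projective when both $X^\nat$ and $Y^\nat$ are, and then the first assertion applied to $\Hom_A(X,Y)$ yields the conclusion. So the question reduces to showing that any dg $A$-module $X$ with $X^\nat$ finitely generated projective is homotopically projective.

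Next I would reduce to the case where $X^\nat$ is finitely generated \emph{free}. Choosing a graded complement $Y^\nat$ so that $X^\nat \oplus Y^\nat \cong \bigoplus_{i=1}^{n} A(-d_i)$, and endowing this free module with the differential $d_X \oplus 0$, produces a dg $A$-module $\tilde F = X \oplus (Y^\nat, 0)$. The second summand is a finite direct sum of shifts $A(-e)$, each representable and hence h-projective; thus $X$ is h-projective if and only if $\tilde F$ is, and it suffices to handle dg modules whose underlying graded module is finitely generated free.

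The main step is then to show that any dg $A$-module $F$ with $F^\nat = \bigoplus_{i=1}^n A(-d_i)$ finitely generated free is h-projective. Since $F^\nat$ is finitely presented, graded localization at any homogeneous prime $\fp$ commutes with $\Hom_A(F,-)$, and acyclicity of complexes can be checked locally, so h-projectivity of $F$ reduces to h-projectivity of $F_{(\fp)}$ over the graded regular local ring $A_{(\fp)}$ for every $\fp$. By the remark preceding the lemma, $A_{(\fp)}$ is evenly graded or satisfies $2A_{(\fp)}=0$. In this local setting, the dg module $F_{(\fp)} \otimes_{A_{(\fp)}} A_{(\fp)}/\fm$ over the graded residue field admits (by direct classification) a decomposition as a finite sum of shifts $(A_{(\fp)}/\fm)(-e)$ with zero differential and contractible pieces $\cone(\id)$ on such shifts. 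A graded Nakayama-type lifting, iteratively adjusting lifts of generators modulo successive powers of $\fm$, lifts this decomposition to $F_{(\fp)}$ itself, writing it as a finite direct sum of sub-dg-modules of the forms $A_{(\fp)}(-e)$ or $\cone(\id_{A_{(\fp)}(-e)})$. Both types are h-projective, so $F_{(\fp)}$ is too.

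The main obstacle is the graded Nakayama-type lifting from $F \otimes_A A/\fm$ to $F$. The iterative adjustments of lifted generators must produce genuine sub-dg-modules of $F$, not merely of its reduction, and ensuring convergence requires either a faithfully flat descent argument via the $\fm$-adic completion, or an a priori finiteness argument exploiting the finite generation of $F^\nat$. Carefully managing the interaction between the differential on $F$ and the successive corrections is the key technical hurdle.
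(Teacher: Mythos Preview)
Your reductions—deducing the second assertion from the first, passing to a graded complement to assume $F^\nat$ is finite free, and localizing at a homogeneous prime to reduce to the graded regular local case—are all sound, and the localization step is exactly what the paper does. The problem is the final step: the claimed decomposition of $F_{(\fp)}$ as a direct sum of modules of the form $A_{(\fp)}(-e)$ (with zero differential) and contractible cones is simply false, and no amount of Nakayama-type adjustment will produce it.

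Here is a counterexample. Let $A$ be graded regular local with maximal ideal $\fm$ and residue field $k$, and take any homogeneous $x\in\fm$ with $x\neq 0$, say of degree $e$. Set $F=\cone\bigl(A(-e)\xrightarrow{\,x\,}A\bigr)$. Then $\bar F=F\otimes_A k=\cone\bigl(k(-e)\xrightarrow{\,0\,}k\bigr)\cong k\oplus k(-e)[1]$ with zero differential, exactly the kind of decomposition you describe over the residue field. But $F$ itself has $H(F)\cong A/xA$, which is nonzero and not a free $A$-module; hence $F$ is neither contractible nor a direct sum of shifts of $A$. The decomposition over $k$ does not lift, and this is not a convergence issue that completion would repair—it is an obstruction already visible in $H(F)$.

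The paper's route, after the same localization, is to invoke Dold's argument directly: over a ring of finite (graded) global dimension, every dg module that is graded projective is homotopically projective. One way to see this is to take a semi-free resolution $P\xrightarrow{\sim}F$ and look at $C=\cone(P\to F)$, which is acyclic with $C^\nat$ graded projective. The short exact sequence $0\to Z(C)\to C\to \Sigma Z(C)\to 0$ of graded modules splices with its own shifts to exhibit every syzygy of $Z(C)$ as a shift of $Z(C)$ again; since $\operatorname{gldim}A<\infty$, this forces $Z(C)$ to be projective, the sequence splits, and $C$ is contractible. Hence $F$ is homotopy equivalent to the semi-free $P$ and therefore h-projective. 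This avoids any attempt to decompose $F$ explicitly.
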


\begin{proof}
Taking a dg module $X$ that is finitely generated and projective as a graded module, our goal is to show that $X$ is homotopically projective. Since $X$ is finitely generated and projective, $\Hom_A(X,Y)_{(\fp)}\cong \Hom_{A_{(\fp)}}(X_{(\fp)},Y_{(\fp)})$ for any dg module $Y$ and any homogeneous prime $\fp$. Therefore we may localise at $\fp$ and assume that $A$ has finite global dimension. In this situation, projective dg modules are homotopically projective by \cite[3.1]{Dold} (this result is stated for ungraded rings, but the argument for dg modules is similar).

The second second statement follows from the first, since $\Hom_A(X,Y)$ is finitely generated and projective as a graded $A$-module.
\end{proof}

\begin{defn}\label{perf_def_appendix_curved} 
Let $\cA = (A, w)$ be a curved ring such that $A$ is graded commutative and regular. Based on Lemma~\ref{lem_perf_curved}, curved dg $\cA$-modules that are finitely generated and projective over $A$ will be called \emph{perfect}, and we will use the notation $\projdg(\cA)=\Perfdg(\cA)$. We again emphasise that it only seems reasonable to use this terminology when $A$ is regular. Compare with Section \ref{introduction} above.
\end{defn}

For a homogeneous prime $\fp$ of $A$, we write $\cA_{(\fp)}$ for the curved ring $(A_{(\fp)}, \frac{w}{1})$ and, for a curved dg $\cA$-module $X$, we let $X_{(\fp)}$ denote the curved dg $\cA_{(\fp)}$-module $X\otimes_A A_{(\fp)}$.

\begin{lem} \label{applem2}
  Let $\cA = (A, w)$ be a curved ring such that $A$ is graded commutative and regular, and let $X$ be a perfect curved dg $\cA$-module. Then
  \[
  \supp^*_A(X) \cap \cV(w)= \Big\{ \fp  \mid w\in\fp \text{ and $X_{(\fp)}$ is not isomorphic to zero in $[\Perfdg(\cA_{(\fp)})]$}\Big\},
  \]
  where $\cV(w)= \{\fp\mid w\in\fp\}$. If $A$ is evenly graded then $\supp^*_A(X) \cap \cV(w)= \supp^*_A(X)$.
%For any Zariski closed and homogeneous subset $Z$ of $\Spec(A)$, the full subcategory   $[\Perfdg^Z(\cA)]$   of $[\Perfdg(\cA)]$ consisting of those objects whose supports are contained in $Z$ is thick.
\end{lem}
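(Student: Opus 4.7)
The plan is to establish both assertions by reducing to the following central fact: for a perfect curved dg module $Y$ over any curved ring, $Y$ is isomorphic to zero in $[\Perfdg]$ if and only if the dg algebra $\Hom_A(Y,Y)$ is acyclic. Granting this, the first equation is a direct consequence of definitions, and the second reduces to the contractibility of $X_{(\fp)}$ whenever $w$ is a unit in $A_{(\fp)}$.

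For the first assertion I would observe that by definition $\fp \in \supp_A^*(X)$ iff $H(\Hom_A(X,X))_{(\fp)} \neq 0$, and since $X$ is finitely generated and projective as a graded $A$-module, the canonical map $\Hom_A(X,X)_{(\fp)} \to \Hom_{A_{(\fp)}}(X_{(\fp)}, X_{(\fp)})$ is an isomorphism. Thus $\fp \in \supp_A^*(X)$ iff $\Hom_{A_{(\fp)}}(X_{(\fp)}, X_{(\fp)})$ is non-acyclic, which by the central fact above is equivalent to $X_{(\fp)} \not\cong 0$ in $[\Perfdg(\cA_{(\fp)})]$. Imposing the condition $\fp \in \cV(w)$ on both sides yields the first equation. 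To justify the central fact, the nontrivial direction is that if $[\id_Y] = 0$ in $H^0(\Hom_A(Y,Y))$, then $\Hom_A(Y,Y)$ is acyclic. First one notes that the differential on this dg algebra squares to zero since its square is given by the commutator with the central element $w$. Next, if $d(h) = \id_Y$ for some $h \in \Hom_A(Y,Y)^{-1}$, the Leibniz rule gives, for any cycle $g$, the identity $d(gh) = d(g) h + (-1)^{|g|} g\, d(h) = (-1)^{|g|} g$, so every cycle is a boundary.

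For the second assertion, assuming $A$ is evenly graded, I would prove $\supp_A^*(X) \subseteq \cV(w)$ by showing $X_{(\fp)}$ is contractible whenever $w \notin \fp$. After replacing $A$ by $A_{(\fp)}$ I may assume $A$ is evenly graded, regular, graded local, and that $w \in A^2$ is a unit. Then $X$ is graded free over $A$, and since $A$ is evenly graded the parity decomposition $X = X^\ev \oplus X^\odd$ is a decomposition of graded $A$-modules. The differential decomposes into graded $A$-linear maps $D \colon X^\ev \to X^\odd$ and $D' \colon X^\odd \to X^\ev$ satisfying $D'D = w\cdot \id = DD'$. I would then exhibit the contracting homotopy $h \colon X \to X$ of degree $-1$ defined by $h|_{X^\ev} = 0$ and $h|_{X^\odd} = w^{-1} D'$; a direct computation using $D'D = w\cdot \id = DD'$ yields $dh + hd = \id$, giving contractibility of $X_{(\fp)}$, hence $\fp \notin \supp_A^*(X)$ by the first assertion.

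The main obstacle is ensuring that this $h$ is genuinely graded $A$-linear in the appropriate sense. The symmetric choice $h = (2w)^{-1} d$ fails in characteristic two, so the asymmetric form above is essential; however, $h$ satisfies the graded $A$-linearity condition $h(ba) = (-1)^{|h||b|} b\, h(a)$ automatically only because every element of $A$ is of even degree, making every sign trivial. Dropping the evenly-graded hypothesis genuinely breaks the second equality: for instance, taking $A = \F_2[x, x^{-1}]$ with $x$ in degree one, $w = x^2$, and $X = A$ with $d_X$ equal to multiplication by $x$, one checks that $\supp_A^*(X)$ contains the unique homogeneous prime $(0)$ while $\cV(w)$ is empty since $w$ is a unit.
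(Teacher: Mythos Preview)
Your proof is correct and follows essentially the same approach as the paper. The one minor difference worth noting: for the first assertion, the paper argues that $X_{(\fp)}\cong 0$ in $[\Perfdg(\cA_{(\fp)})]$ if and only if $\Hom_A(X,X)_{(\fp)}$ is \emph{contractible}, and then invokes Lemma~\ref{lem_perf_curved} (homotopical projectivity in the regular case) to pass from contractible to acyclic. Your Leibniz-rule argument (``if $d(h)=\id$ then $d(gh)=(-1)^{|g|}g$ for every cycle $g$'') establishes directly that $[\id_Y]=0$ is equivalent to acyclicity of $\End(Y)$, so you never need h-projectivity here; this is a small but genuine simplification. For the second assertion your contracting homotopy is the mirror image of the paper's (you set $h=0$ on even degrees and $w^{-1}d$ on odd degrees, the paper does the reverse), and your counterexample in characteristic two is exactly the one the paper records in the remark following the proof.
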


\begin{proof} 
Since $X$ is projective and finitely generated, $\Hom_{A_{(\fp)}}(X_{(\fp)},X_{(\fp)})\cong\Hom_A(X,X)_{(\fp)}$ for any $\fp\in \Spec^*(A)$. Therefore $X_{(\fp)}$ is isomorphic to zero in $[\Perfdg(\cA_{(\fp)})]$ if and only if $\Hom_A(X,X)_{(\fp)}$ is contractible. By Lemma \ref{lem_perf_curved} $\Hom_A(X,X)_{(\fp)}$ is homotopically projective, so it is acyclic if and only if it is contractible. This establishes the first claim.

If $w\notin \fp$ then $w\colon X_{(\fp)}\to X_{(\fp)}$ is invertible, and it follows that the differential $d\colon X_{(\fp)}\to X_{(\fp)}$ is invertible as well, with $A_{(\fp)}$-linear inverse $(d)^{-1}$. Define $h\colon X_{(\fp)}\to X_{(\fp)}$ by the rule $h(x)=(d)^{-1}(x)$ is $|x|$ is even and $h(x)=0$ is $|x|$ is odd, and note that $hd+dh=\id_{X_{(\fp)}}$.  If $A$ is evenly graded then so is $A_{(\fp)}$, and this implies that $h$ is $A_{(\fp)}$-linear. It follows that $\Hom_A(X,X)_{(\fp)}\cong \Hom_{A_{(\fp)}}(X_{(\fp)},X_{(\fp)})$ is contractible. This argument shows that $\supp^*_A(X)\subseteq \cV(w)$.
\end{proof}

\begin{rem}
The hypothesis that $A$ is evenly graded is necessary to ensure that $\supp^*_A(X) \subseteq \cV(w)$. %(and this is  \emph{only} required to constrain the possible support sets).
For example, let $A=k[x^{\pm1}]$, where $k$ is a field of characteristic $2$ and $x$ is a variable  of degree $1$, and set $w=x^2$. The perfect curved dg module $X=A$, with differential $d_X=x$, is supported at the unique homogeneous prime $\fp=(0)$ of $A$, despite the fact that $w\notin \fp$.
\end{rem}

\begin{lem}\label{lem_reg_zero}
Let $\cA = (A, w)$ be a curved local ring. If $w$ is a non-zero-divisor (and not a unit) 
and the quotient $A/(w)$ is regular, then $[\projdg(\cA)]\simeq 0$.
\end{lem}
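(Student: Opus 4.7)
The aim is to show every perfect curved dg $\cA$-module $X$ is contractible, by induction on the rank of $X$ as a (necessarily free) graded $A$-module. Since $w\in\fm$ is a non-zero-divisor and $A/(w)$ is regular, the ring $A$ is itself regular local, and moreover $w\in\fm\setminus\fm^2$ (lift a regular system of parameters of $A/(w)$ and prepend $w$). For the inductive step, write $X$ as a graded free $A$-module with homogeneous basis $e_1,\ldots,e_n$, and let $\bs{a}=(a_{ji})$ be the matrix representing $d$, so that $d(e_i)=\sum_j a_{ji}e_j$ and the identity $d^2=w\cdot\id_X$ becomes $\bs{a}^2=w\cdot I$. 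The key observation is that some entry of $\bs{a}$ must be a unit of $A$: if every $a_{ji}$ lay in $\fm$, then every entry of $\bs{a}^2$ would lie in $\fm^2$, contradicting the fact that the diagonal entries of $w\cdot I$ equal $w\notin\fm^2$.

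Using such a unit entry $u=a_{ji}$, we split off a ``trivial'' rank-two summand as follows. Set $\tilde{e}_j:=d(e_i)$ and let $T$ be the graded $A$-submodule of $X$ generated by $e_i$ and $\tilde{e}_j$. Because the coefficient of $e_j$ in $\tilde{e}_j$ is the unit $u$, replacing $e_j$ by $\tilde{e}_j$ in the basis of $X$ yields a new homogeneous basis, so $T$ is a direct graded summand of rank two. The relations $d(e_i)=\tilde{e}_j$ and $d(\tilde{e}_j)=d^2(e_i)=w\,e_i$ exhibit $T$ as a curved dg sub-$\cA$-module isomorphic to the trivial rank-two matrix factorization, which is contractible via the explicit homotopy $h_T(e_i)=0$, $h_T(\tilde{e}_j)=e_i$.

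The final step is to argue that the short exact sequence $0\to T\to X\to X/T\to 0$ splits in the category of curved dg $\cA$-modules. Granting this, $X\cong T\oplus X/T$ with $X/T$ a perfect curved dg $\cA$-module of strictly smaller rank, to which the inductive hypothesis applies. The obstruction to splitting lives in $H^1$ of $\Hom_A(X/T,T)$, which is a genuine chain complex because $w$ is central (so the potential curvature $[w,-]$ on $\Hom$ vanishes). Post-composition with $h_T$ provides a contracting homotopy $f\mapsto h_T\circ f$ on $\Hom_A(X/T,T)$, making it acyclic, so the obstruction vanishes. The main delicate point is this splitting step: regularity of $A/(w)$ (equivalently $w\notin\fm^2$) is what produces the unit entry, while contractibility of $T$ is what upgrades a mere graded-module splitting to a splitting of curved dg modules.
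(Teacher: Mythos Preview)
Your proof is correct and takes a genuinely different route from the paper. The paper passes to $A/(w)$: since $w$ is a non-zero-divisor, $X\otimes_A A/(w)$ is an exact complex; since $A/(w)$ is regular, it is homotopically projective and therefore contractible. Setting $Y=\End_A(X)$, the triangle $\Sigma^{-2}Y\xrightarrow{w}Y\to Y\otimes_A A/(w)$ then shows that $w$ acts invertibly on $H(Y)$, whence $H(Y)=0$ by Nakayama. Your argument is instead the classical matrix-factorization reduction (essentially Eisenbud's): extract from $w\notin\fm^2$ a unit entry in the matrix of $d$, peel off a contractible rank-two summand, and induct on rank. Your approach is more elementary and constructive---it literally decomposes $X$ into trivial rank-two pieces---while the paper's argument is shorter, avoids choosing bases, and generalises more readily.

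One small point to patch: you tacitly assume the unit entry $a_{ji}$ is off-diagonal. A diagonal unit $a_{ii}$ would have degree one (forcing $2=0$ in $A$), and then ``replace $e_j$ by $\tilde e_j$'' no longer exhibits $T$ as a summand. The fix is immediate: if $a_{ii}$ is a unit then so is $a_{ii}^2$, whereas $w\in\fm$, so the relation $\sum_j(\pm)a_{ij}a_{ji}=w$ forces some off-diagonal $a_{ij}$ with $j\ne i$ to be a unit as well, and you may use that entry instead. In the evenly-graded setting relevant to the paper's applications the issue does not arise at all.
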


\begin{proof}
   Let $X$ be a curved dg $\cA$-module that is finitely generated and projective, and set $Y = \End_A(X)$. 
   
   Since $w$ is a non-zero-divisor, it follows that the complex $X\otimes_A(A/(w))$ is exact. Since $A/(w)$ is regular, by lemma \ref{lem_perf_curved} the dg module $X\otimes_AA/(w)$ is $h$-projective, and therefore contractible. In other words, $\End_{A/(w)}(X\otimes_AA/(w))= Y\otimes_AA/(w)$ is exact.

   From the exact sequence in homology associated to the triangle $\Sigma^{-2} Y \xrightarrow{w} Y \to Y\otimes_AA/(w)$, we find that $\Sigma^{-2} {\rm H}(Y) \xrightarrow{w} {\rm H}(Y) $ is an isomorphism. Then by Nakayama's lemma ${\rm H}(Y)=0$. This means that $X$ is the zero object in $[\projdg(\cA)]$, as required.
\end{proof}

We will say that $\cA=(A,w)$ is \emph{regular} at $\fp\in \Spec^*(A)$ if $\frac{w}{1}$ is a non-zero-divisor in $A_{(\fp)}$ and the quotient $A_{(\fp)}/(\frac{w}{1})$ is a regular local ring. The set of homogeneous primes at which $\cA$ is not regular is written $\Nonreg^*(\cA)$. If $w$ is assumed to be a non-zero-divisor then we may make the identification $\Nonreg^*(\cA)= \Nonreg^*(A/(w))$. %On the other hand, if $A$ is regular then $\Nonreg^*(A,w)$ is the ramification locus $\{\fp \mid w\in \fp^2\}$. 

 We also remind the reader that a subset $Z\subseteq \Spec^*(A)$ is called \emph{specialization closed} if $\fp\in Z$ and $\fp\subseteq \fq$ implies $\fq\in Z$, for any $\fp,\fq\in \Spec^*(A)$.

\begin{thm}\label{thm_curved_regular}
Let $\cA = (A, w)$ be a curved ring such that $A$ is graded commutative and regular. If $X$ and $Y$ are in $\Perfdg(\cA)$ and $\supp_A^*(X) \subseteq \supp_A^*(Y)$, then $X$ is in $ \Thick_{[\Perfdg(\cA)]}(Y)$. When $A$ is evenly graded this yeilds a bijection
  \[
\xymatrix{
\Big\{\text{thick subcategories of }[\Perfdg(\cA)]\Big\} \ar@<1mm>[r]^(.45){\sigma} & \Big\{\text{specialization closed subsets of }\Nonreg^*(\cA)\Big\}, \ar@<1mm>[l]^(.55){\theta}
}   
 \]
where $\theta(Z)=\{X\in [\Perfdg(\cA)]  \mid \supp_A^*(X)\subseteq Z\}$ and $\sigma(T)=\bigcup_{X \in T} \supp^*_A(X)$. 
\end{thm}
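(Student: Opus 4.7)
The plan is to derive both parts of the theorem from Theorem~\ref{BriggsTheorem} (the curved Hopkins--Neeman principle established above in the general noetherian setting) together with Lemmas~\ref{lem_perf_curved}, \ref{applem2}, and \ref{lem_reg_zero}. The first assertion is essentially immediate: given $X, Y \in \Perfdg(\cA) = \projdg(\cA)$, Lemma~\ref{lem_perf_curved} combined with the regularity of $A$ forces each of $\Hom_A(X,X)$, $\Hom_A(Y,Y)$, and $\Hom_A(Y,X)$ to be a perfect dg $A$-module, so all hypotheses of Theorem~\ref{BriggsTheorem} are automatic and the conclusion $X \in \Thick_{[\Perfdg(\cA)]}(Y)$ follows at once.

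For the bijection (under the evenly graded hypothesis), I would first verify well-definedness of $\sigma$ and $\theta$. That $\theta(Z)$ is thick is clear, and $\sigma(T)$ is specialization closed because it is a union of supports of the finitely generated graded modules $H(\End_A(X))$, each of which is Zariski closed hence specialization closed. The content is the containment $\sigma(T) \subseteq \Nonreg^*(\cA)$: for $\fp \in \supp_A^*(X)$, Lemma~\ref{applem2} (using the evenly graded hypothesis to force $w \in \fp$) shows that $X_{(\fp)}$ is nonzero in $[\Perfdg(\cA_{(\fp)})]$, so were $\cA$ regular at $\fp$, Lemma~\ref{lem_reg_zero} applied to $\cA_{(\fp)}$ would force this homotopy category to vanish, a contradiction.

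The main technical step---and the one I expect to present the primary obstacle---is the realization identity $\sigma(\theta(Z)) = Z$ for every specialization closed $Z \subseteq \Nonreg^*(\cA)$. The inclusion $\subseteq$ is formal; for the reverse it suffices to produce, for each $\fp \in \Nonreg^*(\cA)$, a module $X_\fp \in \Perfdg(\cA)$ with $\fp \in \supp_A^*(X_\fp) \subseteq V(\fp)$, since specialization closure then forces $\supp_A^*(X_\fp) \subseteq Z$. My construction would use Koszul matrix factorizations: choose homogeneous generators $a_1, \ldots, a_r$ of $\fp$; the non-regularity of $\cA_{(\fp)}$ together with $w \in \fp$ yields a factorization $w = \sum_i a_i b_i$ with all $b_i \in \fp A_{(\fp)}$, which after clearing denominators by some $s \notin \fp$ holds over $A[s^{-1}]$. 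The associated matrix factorization on $\extpower_{A[s^{-1}]}(A[s^{-1}])^{r}$---Koszul contraction against $(a_i)$ plus exterior multiplication by $\sum b_i e_i$---is a perfect curved dg module over $(A[s^{-1}], w)$ with support $V(\fp) \cap \Spec A[s^{-1}]$. Passing from this locally defined module to a perfect curved dg $\cA$-module whose support near $\fp$ remains $V(\fp)$ is where the technical heart of the argument lies, and would be handled by a Zariski-local patching argument standard in matrix factorization theory.

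Finally, given the realization, $\theta(\sigma(T)) = T$ follows from the first assertion: the inclusion $T \subseteq \theta(\sigma(T))$ is tautological, and conversely, if $X \in \theta(\sigma(T))$ then the Zariski closed set $\supp_A^*(X)$ has finitely many minimal primes $\fq_1, \ldots, \fq_n \in \sigma(T)$, so we may choose $Y_i \in T$ with $\fq_i \in \supp_A^*(Y_i)$, whence closedness of each $\supp_A^*(Y_i)$ gives $V(\fq_i) \subseteq \supp_A^*(Y_i)$. Then $\supp_A^*(X) = \bigcup_i V(\fq_i) \subseteq \supp_A^*(Y_1 \oplus \cdots \oplus Y_n)$, and the first assertion of the theorem produces $X \in \Thick(Y_1 \oplus \cdots \oplus Y_n) \subseteq T$.
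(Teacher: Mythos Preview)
Your argument is structurally sound and tracks the paper's proof closely: the first assertion via Theorem~\ref{BriggsTheorem} and Lemma~\ref{lem_perf_curved}, the well-definedness of $\sigma$ and $\theta$ via Lemmas~\ref{applem2} and~\ref{lem_reg_zero}, and the deduction of $\theta\sigma=\id$ from the first assertion (which you spell out explicitly via minimal primes; the paper simply asserts it) all match.

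The gap is in the realization step. You construct the Koszul matrix factorization over $A[s^{-1}]$ and then invoke an unspecified ``Zariski-local patching argument standard in matrix factorization theory'' to produce a perfect curved dg $\cA$-module. No such standard argument applies here: a perfect curved module over $(A[s^{-1}],w)$ does not in general extend to one over $(A,w)$ (restriction of scalars destroys finite generation, and you have supplied no compatible data on the closed complement $V(s)$ with which to glue). The paper avoids this entirely by first observing that a regular noetherian graded ring decomposes as a finite product $A_1\times\cdots\times A_m$ of regular graded \emph{domains}, and that the restriction functors $[\Perfdg(A_i,w_i)]\to[\Perfdg(\cA)]$ preserve support; hence one may assume from the outset that $A$ is a domain. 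Once $A$ is a domain the localization map $A\hookrightarrow A_{(\fp)}$ is injective, and the paper argues that the condition $w/1\in\fp^2 A_{(\fp)}$ forces $w\in\fp^2$ already in $A$. The factorization $w=\sum x_i y_i$ with $y_i\in\fp$ is then available globally, the curved Koszul module $X=(K,d+u)$ lives directly in $\Perfdg(\cA)$, and one checks $\supp^*_A(X)=V(\fp)$ by hand. The reduction to a domain is the idea your proposal is missing.
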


\begin{proof}
The first statement is obtained by combining Theorem \ref{BriggsTheorem} with Lemma \ref{lem_perf_curved}.

%over a regular ring every dg module satisfies $\tau(X)\cong X$. %being regular implies that locally $A$ is even or $2A=0$.

The assignment $X \mapsto X_{(\fp)}$ determines an exact functor $[\Perfdg(\cA)]\to[\Perfdg(\cA_{(\fp)})]$, and therefore its kernel $\theta(\fp)$ is a thick subcategory of $[\Perfdg(\cA)]$. It follows from Lemma \ref{applem2} that $\theta(Z)=\bigcap_{\fp\in Z}\theta(\fp)$ is also a thick subcategory of $[\Perfdg(\cA)]$.

Suppose that  $A$ is evenly graded. The support of any perfect curved dg module is then a Zariski closed subset of $\Nonreg^*(\cA)$ by Lemmas \ref{applem2} and  \ref{lem_reg_zero}. Since unions of Zariski closed subsets are specialization closed, it follows that $\sigma(T)$ is a specialization closed subset of $\Nonreg^*(\cA)$ for any thick subcategory $T$ of $[\Perfdg(\cA)]$.

The already established first statement implies that $\sigma \theta =\id$. Therefore, it remains to show that $\sigma$ is surjective to obtain the claimed bijection. For this, we give a construction borrowing inspiration from \cite[2.3]{BGS}. % This is proven in Theorem \ref{thm1014} above; we give a different construction in the appendix, borrowing inspiration from \cite[2.3]{BGS}.

Since $A$ is regular it is a finite product $A_1\times\cdots\times A_m$ of regular graded domains, and the curvature element may be written $w=(w_1,\ldots ,w_m)$. As each $A_i$ is a projective graded $A$-module there are restriction functors $\rho_i\colon [\Perfdg(A_i,w_i)]\to[\Perfdg(\cA)]$, and through the inclusions $\Spec^*(A_i)\subseteq \Spec^*(A)$ these  satisfy $\supp_{A_i}^*(X)=\supp_{A}^*(\rho_i(X))$. Hence we may assume that $A$ is a domain.

Every specialization closed subset is a union of irreducible Zariski closed subsets, so it suffices to show that for every $\fp\in  \Nonreg^*(\cA)$ there is a perfect curved dg module $X$ with support $\cV(\fp)=\{\fq \mid \fp\subseteq \fq\}$. 

Let $K$ be the Koszul complex over $A$ on a generating set $x_1,\ldots,x_n$ for $\fp$, thought of a dg $A$-algebra generated by exterior variables $e_1,\ldots, e_n$ with its Koszul differential $d(e_i)=x_i$. 

Since $\fp\in  \Nonreg^*(\cA)$ either $\frac{w}{1}=0$ in $A_{(\fp)}$ or $A_{(\fp)}/(\frac{w}{1})$ is not regular, and in either case $\frac{w}{1}\in \fp^2A_{(\fp)}$. Since $A$ is a domain, it follows that $w\in \fp^2$, and we may write $w=x_1y_1+\cdots + x_ny_n$ with $y_i\in \fp$.

Multiplication by the element $u=e_1y_1+\cdots + e_ny_n$ defines a map $u\colon K^*\to K^{*+1}$, and a computation shows that $(d+u)^2=w$. Hence $X=(K,d+u)$ is a perfect curved dg $\cA$-module.

It remains only to verify that  $\supp^*_A(X)=\cV(\fp)$. If $\fp\subseteq \fq$ then $X\otimes_A A_{(\fq)}/(\fq)$ is an exterior algebra over  $A_{(\fq)}/(\fq)$ with trivial differential, and in particular $X_{(\fq)}$ cannot be isomorphic to zero, so $\fq\in\supp^*_A(X)$ by Lemma \ref{applem2}. Conversely if $\fp\not\subseteq \fq$ then there is an $i$ such that $x_i\notin \fq$. The map $h=\frac{e_i}{x_i}\colon X^*_{(\fq)}\to X^{*-1}_{(\fq)}$ then satisfies $(d+u)h+h(d+u)=dh+hd=\id_X$, showing that $X_{(\fq)}$ is isomorphic to zero in $[\Perfdg(\cA_{(\fq)})]$, and so $\fq\notin\supp^*_A(X)$ by Lemma \ref{applem2}.
\end{proof}

An object $X \in [\Perfdg(\cA)]$ is said to be a classical generator if $\Thick_{[\Perfdg(\cA)]}(X)=[\Perfdg(\cA)]$. The next result follows directly from Theorem \ref{thm_curved_regular}. It is a curved analogue of the main result in \cite{IR}. We note that if $A$ is excellent
then $\Nonreg^*(\cA)$ is Zariski closed.

%\begin{cor} Let $\cA = (A, w)$ be a curved ring such that $A$ is evenly graded, commutative and regular. Then $\Nonreg^*(\cA)$ is a  Zariski closed subset of $\Spec^*(A)$ if and only if $[\Perfdg(\cA)]$ admits a classical generator, if and only if there is an object $X \in [\Perfdg(\cA)]$ whose support is $\Nonreg^*(\cA)$. Moreover, any such object $X$ generates $[\Perfdg(\cA)]$, and $[\Perfdg(\cA)]$ may be identified up to summands as the homotopy category of perfect dg modules over the dg algebra $\End_A(X)$.\end{cor}

\begin{cor} \label{corA12} If $\cA = (A, w)$ is a curved ring such that $A$ is evenly graded, commutative and regular, then the following are equivalent:
\begin{enumerate}
    \item $\Nonreg^*(\cA)$ is a  Zariski closed subset of $\Spec^*(A)$;
    \item\label{item_gen}there is an object $X$ of $[\Perfdg(\cA)]$ whose support is $\Nonreg^*(\cA)$;
    \item $[\Perfdg(\cA)]$ admits a classical generator;
    \item\label{item_morita} $[\Perfdg(\cA)]$ is equivalent, up to summands, to $[\Perfdg(R)]$, for some dg algebra $R$.
\end{enumerate}
Any object $X$ as in (\ref{item_gen}) generates $[\Perfdg(\cA)]$, and then we may take $R=\End_A(X)$ in (\ref{item_morita}).
\end{cor}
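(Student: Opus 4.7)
The plan is to leverage Theorem \ref{thm_curved_regular} throughout, both for its classification of thick subcategories and for the explicit Koszul-style construction of modules with prescribed support given in its proof. I would prove the cycle $(1)\Rightarrow(2)\Rightarrow(3)\Rightarrow(1)$ together with $(3)\Leftrightarrow(4)$.

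For $(1)\Rightarrow(2)$: assuming $\Nonreg^*(\cA)$ is Zariski closed, write it as a finite union $V(\fp_1)\cup\cdots\cup V(\fp_n)$ of its irreducible components. The construction at the end of the proof of Theorem \ref{thm_curved_regular} (the perturbed Koszul complex $(K,d+u)$ built out of a generating set for $\fp_i$ and a writing $w=\sum x_jy_j$ with $y_j\in\fp_i$) produces a perfect curved $\cA$-module $X_i$ with $\supp^*_A(X_i)=V(\fp_i)$; taking $X=\bigoplus_i X_i$ gives an object with support equal to $\Nonreg^*(\cA)$. The implication $(2)\Rightarrow(3)$ is then immediate: any $Y\in[\Perfdg(\cA)]$ has $\supp^*_A(Y)\subseteq\Nonreg^*(\cA)=\supp^*_A(X)$ by Lemmas \ref{applem2} and \ref{lem_reg_zero}, so by Theorem \ref{thm_curved_regular} we have $Y\in\Thick_{[\Perfdg(\cA)]}(X)$, showing $X$ is a classical generator.

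For $(3)\Rightarrow(1)$: let $X$ be a classical generator. Since $X$ is projective and finitely generated as a graded $A$-module, $H(\End_A(X))$ is a finitely generated graded $A$-module, hence $\supp^*_A(X)$ is Zariski closed. On the other hand, for every $\fp\in\Nonreg^*(\cA)$ the Koszul-perturbation construction produces a perfect curved module whose support contains $\fp$, and since $X$ generates, this support lies inside $\supp^*_A(X)$; combined with the reverse inclusion (Lemmas \ref{applem2} and \ref{lem_reg_zero}), we get $\supp^*_A(X)=\Nonreg^*(\cA)$, which is therefore Zariski closed. This simultaneously gives the last sentence of the corollary, that any $X$ as in (2) is a classical generator.

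For $(3)\Leftrightarrow(4)$: given a classical generator $X$, set $R=\End_A(X)$, which is a dg $k$-algebra because $w$ is central. The dg functor $\Hom_A(X,-)\colon \Perfdg(\cA)\to\Moddg(R^{\op})$ sends $X$ to $R$ (viewed as a free right $R$-module) and induces an exact functor on homotopy categories, which by Proposition \ref{thickprop} restricts to a quasi-equivalence between the thick closures, i.e.\ between $[\Perfdg(\cA)]$ and $\Thick_{\cD(R^{\op})}(R)$. As noted in Remark \ref{rem_idem_complete}, the latter is the idempotent completion of $[\Perfdg(R^{\op})]$, which is the content of ``equivalent up to summands.'' Conversely, if $[\Perfdg(\cA)]$ is equivalent up to summands to $[\Perfdg(R)]$, then any preimage of $R$ (together with suitable summands from $[\Perfdg(\cA)]$) is a classical generator. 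The main obstacle I anticipate is the bookkeeping in the Morita step: one must be careful to keep track of the ``up to summands'' qualifier and to verify that $\Hom_A(X,-)$ actually lands in perfect right $R$-modules, which uses the hypothesis that $X$ is itself perfect together with the identification of $X$ as its own trivial module over $R$.
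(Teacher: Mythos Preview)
Your proposal is correct and matches the paper's approach; the paper's own proof is the single line ``follows directly from Theorem~\ref{thm_curved_regular},'' and you have accurately spelled out the details that the authors leave implicit. One small technical point: in $(3)\Rightarrow(4)$ your invocation of Proposition~\ref{thickprop} is slightly off, since that proposition requires the thick closure of the image to be all of $[C]$, which fails for $\Hom_A(X,-)\colon \Perfdg(\cA)\to\Moddg(R^{\op})$; the cleaner route is to apply Proposition~\ref{thickprop} to the fully faithful inclusion of the one-object subcategory $\{X\}\hookrightarrow\Perfdg(\cA)$ (whose endomorphism dga is $R$) and then pass to compact objects, which gives exactly the ``up to summands'' equivalence you want via Remark~\ref{rem_idem_complete}.
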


We end the appendix by highlighting several special cases.

\begin{ex}[Hypersurface rings]
Let $Q$ be a commutative noetherian ring and let $f \in Q$. We cosider the curved ring $\cA = (A, w) = (Q[t^{\pm1}], ft)$ where   $t$ is a degree two indeterminant. The dg category $\Perfdg(\cA)$  may be identified with  $\mfdg(Q,f)$, the dg category of matrix factorizations of $f$.
  
When $Q$ is regular and $f$ is a non-zero-divisor, the corresponding homotopy category $[\mfdg(Q,f)]$ is equivalent to the singularity category to $\cD^\sing(R) = \frac{\cD^{\rm b}(R)}{\Perfdg(R)}$ of the hypersurface ring $R = Q/f$; this was proven by Buchweitz \cite[Theorem 4.4.1]{MCMbook}, and later rediscovered by Orlov \cite[Theorem 3.9]{Orlov}.
  
We may also identify $\Nonreg^*(\cA)$ with $\Nonreg(R)$, and thus the Theorem \ref{thm_curved_regular} yields a bijection
\[
\Big\{\text{thick subcategories of }\cD^\sing(R)\Big\}      \xleftrightarrow{\ \cong\ } \Big\{\text{specialization closed subsets of }\Nonreg(R)\Big\}
 \]
 This has been proven before by Takahasi \cite{Tak}, and later, using different methods, by Stevenson \cite[7.9]{Stevenson}. Hirano has also established a more general classification result for matrix factorizations over non-regular rings  \cite{Hirano}. This result suggests that the ``homotopically projective'' assumption in Theorem \ref{BriggsTheorem} may be unnecessary (but in this appendix we limit ourselves to results that can be easily deduced from the existing literature).
 
When $R$ is local with an isolated singularity, that is, when $\Nonreg(R)$ consists of only the maximal ideal $\fm$, it follows that $\cD^\sing(R)$ is the thick closure of the class of the residue field $R/\fm$; this was first proven by Dyckerhoff \cite{Dyckerhoff} (in the case that $Q$ is equicharacteristic).
\end{ex}

\begin{ex}[Complete intersection rings]
Let $f_1, \dots, f_c$ be a regular sequence in $Q$, and set $R = Q/(f_1, \dots, f_c)$. This time we consider the curved ring $\cA =(A,w)$, where $A=Q[t_1, \ldots, t_c]$ for some degree two indeterminants $t_1, \ldots, t_c$, and where $w= f_1t_1+\cdots + f_ct_c$.

Since $A$ is evenly graded, every perfect curved dg module $X$ over $\cA$ splits as a direct sum of two projective graded $A$-modules $X=X_{\rm even}\oplus X_{\rm odd}$, and this allows us to identify $[\Perfdg(\cA)]$ with the homotopy category graded $[\mathrm{gr\text{-}mf}(A,w)]$  of graded matrix factorizations of $w$, in the sense of \cite{BurkeStevenson}. Therefore, in our language, \cite[7.5]{BurkeStevenson} says that there is an equivalence of triangulated categories $[\Perfdg(\cA)] \cong D^b(R)$. Theorem \ref{thm_curved_regular} now yields a bijection
\[
\Big\{\text{thick subcategories of }\cD^{\rm b}(R)\Big\}      \xleftrightarrow{\ \cong\ } \Big\{\text{specialization closed subsets of }\Nonreg^*\!\Big(\frac{Q[t_1, \dots, t_c]}{f_1t_1+\cdots +f_ct_c}\Big)\Big\}.
\]
Observe that $\Spec^*(A)$ is the union of the closed subset $\cV^*(t_1, \ldots, t_c)$, which may be identfied with $\Spec(Q)$, 
and its open complement, which may be identified with $\bP^{c-1}_Q$. It follows that the nonregular locus of $A/(w)$ contains $\Spec(R)$ as a closed subset and its open complement is the nonregular locus of the projective hypersurface  $Y := \Proj \left(A/(w)\right)$ in $\bP^{c-1}_Q$.

Burke and Walker prove that $\cD^{\rm sing}(R)$ is equivalent to $[\mf(\bP^{c-1}_Q, \cO(1),w) ]$, the homotopy category of matrix factorizations of $w$, thought of as a section of  $\cO(1)$ \cite{BurkeWalker2}. It follows from \cite[Theorem 1]{BurkeWalker1} that $[\mf(\bP^{c-1}_Q, \cO(1),w) ]$ is the Verdier quotient of  $[\mathrm{gr\text{-}mf}(A,w)]$ at the thick subcategory of those objects supported $\cV^*(t_1, \dots, t_c)\subseteq \Spec^*(A)$. The commutative diagram at the end of \cite{BurkeStevenson} then identifies this thick subcategory with $\Perfdg(R)$, under the equivalence $[\Perfdg(\cA)] \cong D^b(R)$. Therefore, after taking the Verdier quotient we obtain a bijection 
\[
\Big\{\text{thick subcategories of }\cD^\sing(R)\Big\}      \xleftrightarrow{\ \cong\ } \Big\{\text{specialization closed subsets of }\Nonreg(Y)\Big\}.
\]
This classification was obtained by Stevenson in \cite[10.5]{Stevenson}.
\end{ex}

\bibliographystyle{amsplain}
\bibliography{mergedbib}

\end{document}